\newcommand{\bgS}{{\mbox{\boldmath$\Sigma$}}}
\newcommand{\bbA}{{\bf A}}
\newcommand{\bba}{{\bf a}}
\newcommand{\bbB}{{\bf B}}
\newcommand{\bbb}{{\bf b}}
\newcommand{\bbd}{{\bf d}}
\newcommand{\bbe}{{\bf e}}
\newcommand{\bbE}{{\bf E}}
\newcommand{\bbF}{{\bf F}}
\newcommand{\bbG}{{\bf G}}
\newcommand{\bbH}{{\bf H}}
\newcommand{\bbh}{{\bf h}}
\newcommand{\bbI}{{\bf I}}
\newcommand{\bbj}{{\bf j}}
\newcommand{\bbK}{{\bf K}}
\newcommand{\bbM}{{\bf M}}
\newcommand{\bbQ}{{\bf Q}}
\newcommand{\bbP}{{\bf P}}
\newcommand{\bbO}{{\bf O}}
\newcommand{\bbr}{{\bf r}}
\newcommand{\bbu}{{\bf u}}
\newcommand{\bbV}{{\bf V}}
\newcommand{\bbv}{{\bf v}}
\newcommand{\bbW}{{\bf W}}
\newcommand{\bbX}{{\bf X}}
\newcommand{\bbx}{{\bf x}}
\newcommand{\bbY}{{\bf Y}}
\newcommand{\bby}{{\bf y}}
\newcommand{\bbZ}{{\bf Z}}
\newcommand{\cA}{\bm{{\mathcal A}}}
\newcommand{\bcH}{{\bm{{\mathcal H}}}}
\newcommand{\cK}{{\mathcal K}}
\newcommand{\cM}{{\mathcal M}}
\newcommand{\bcU}{{\bm{{\mathcal U}}}}
\newcommand{\bmth}{\bm\theta}
\newcommand{\bmTh}{\bm\Theta}
\newcommand{\bmPs}{\bm\Psi}
\newcommand{\bmGa}{\bm\Gamma}
\newcommand{\bmga}{\bm\gamma}
\newcommand{\E}{{\mathbb{E}}}
\renewcommand{\P}{{\mathbb{P}}}
\newcommand{\tr}{{\mathrm{tr}}}
\newcommand{\no}{\nonumber}
\newcommand{\asto}{\stackrel{a.s.}{\to}}
\newcommand{\dto}{\stackrel{D}{\to}}
\newcommand{\argmin}{\mathop{\operatorname{arg\,min}}\limits}
\begin{document}

	\theoremstyle{plain}
	\newtheorem{thm}{Theorem}[section]
	\newtheorem{corollary}[thm]{Corollary}
	\newtheorem{defin}[thm]{Definition}
	\newtheorem{prop}[thm]{Proposition}
	\newtheorem{remark}[thm]{Remark}
	\newtheorem{lemma}[thm]{Lemma}
	\newtheorem{assumption}[thm]{Assumption}
%
%
%


\newpage

\begin{center}
\large\bf
KOO Approach for Scalable Variable Selection Problem\\
 in Large-dimensional Regression
\end{center}

\vspace{0.5cm}
\renewcommand{\thefootnote}{\fnsymbol{footnote}}
\hspace{5ex}

\begin{center}
 \begin{minipage}[t]{0.35\textwidth}
\begin{center}
Zhidong Bai \\
\footnotesize {Northeast Normal University, China}\\
{\it baizd@nenu.edu.cn}
\end{center}
\end{minipage}
\hspace{8ex}
\begin{minipage}[t]{0.35\textwidth}
\begin{center}
Kwok Pui Choi \\ 
\footnotesize {National University of Singapore, Singapore}\\
{\it stackp@nus.edu.sg}
\end{center}
\end{minipage}

\end{center}
\vspace{0.2cm}
\begin{center}
 \begin{minipage}[t]{0.35\textwidth}
\begin{center}
Yasunori Fujikoshi \\
\footnotesize {Hiroshima University, Japan}\\
{\it fujikoshi\_y@yahoo.co.jp}
\end{center}
\end{minipage}
\hspace{8ex}
\begin{minipage}[t]{0.35\textwidth}
\begin{center}
Jiang Hu \\ 
\footnotesize {Northeast Normal University, China}\\
{\it huj156@nenu.edu.cn}
\end{center}
\end{minipage}
\end{center}

\vspace{0.5cm}

\begin{center}
 \begin{minipage}{0.8\textwidth}\footnotesize{
 An important issue in many multivariate regression problems is to eliminate candidate predictors with null predictor vectors. In large-dimensional (LD) setting where the numbers of responses and predictors are large, model selection encounters the scalability challenge. Knock-one-out (KOO) statistics hold promise to meet this challenge. In this paper, the almost sure limits and the central limit theorem of the KOO statistics are derived under the LD setting and mild distributional assumptions (finite fourth moments) of the errors. These theoretical results guarantee the strong consistency of a subset selection rule based on the  KOO statistics with a general threshold. For enhancing the robustness of the selection rule, we also propose a bootstrap threshold for the KOO approach. Simulation results support our conclusions and demonstrate the selection probabilities by the KOO approach with the bootstrap threshold outperform the methods using Akaike information threshold, Bayesian information threshold and Mallow's C$_p$ threshold. We compare the proposed KOO approach with those based on information threshold to a chemometrics dataset and a yeast cell-cycle dataset, which suggests our proposed method identifies useful models.}
\end{minipage}
\end{center}

\vspace{0.5cm}
\section{Introduction}
In multivariate statistical analysis, linear regression is a basic and commonly used type of approach. The overall idea of regression is to examine which variables in particular are significant predictors of the outcome variables, and in what way do they indicated by the magnitude and sign of the outcome variables.
Specifically,
\begin{equation}
\label{fullmodel}
	\bbY=\bbX\bm\Theta+\bbE\bgS^{1/2},
\end{equation}
where the $n \times p$ response matrix $\bbY=(y_{ij}) =(\bby_1, \ldots , \bby_n)'$, the $n \times k$ predictor matrix $\bbX= (\tilde{\bbx}_1, \ldots , \tilde{\bbx}_n)'=(\bbx_1,\dots,\bbx_k)$, the $k \times p$ regression coefficient matrix $\bm\Theta=(\bm\theta_1, \dots,\bm\theta_k)'$, the $n\times p$ random errors matrix $\bbE=(\bbe_1,\dots,\bbe_p)=(e_{ij})$ and the $p\times p$ covariance matrix $\bgS$ with full rank.
A main goal in multivariate linear regression (MLR) is to estimate the regression coefficients $\Theta$. The estimates should be such that the
estimated regression plane explains the variation in the
values of the responses with great accuracy.

 Model \eqref{fullmodel} (referred to hereinafter as the full model), however,  is not always satisfactory because some of the predictors may be uncorrelated with the responses. 
We take a simple example to illustrate this fact. Let $\bbj$ be a subset of $[k]=\{1,2,\dots,k\}$, $\bbX_{\bbj}=(\bbx_j, j\in\bbj)$ and ${\bm\Theta}_{\bbj}=(\bm\theta_j, j\in\bbj)'$. Denote model $\bbj$ by
\begin{align}
 \label{eq1}
 M_{\bbj}:\ \ \bbY=\bbX_{\bbj}\bm\Theta_{\bbj}+\bbE\bgS^{1/2}.
\end{align}
The classical linear least-squares solution is to estimate the matrix of regression coefficients $\widehat{\bm\Theta}$ of the full model (\ref{fullmodel}) by
\begin{align*}
	\widehat{\bm\Theta}=(\bbX'\bbX)^{-1}\bbX'\bbY,
\end{align*}
which minimizes the sum of the squares of errors, i.e.,
\begin{align*}
 \widehat{\bm\Theta}= \argmin_{\Theta}\tr(\bbY-\bbX\bmTh)(\bbY-\bbX\bmTh)'.
 \end{align*}
If there exists a predictor vector $\bmth_j=\bf 0$, then the least-squares estimator of the regression coefficients of model $M_{{[k]} \backslash j}$   is
\begin{align*}
	\widehat{\bm\Theta}_{{[k]} \backslash j}=(\bbX_{{[k]} \backslash j}'\bbX_{{[k]} \backslash j})^{-1}\bbX_{{[k]} \backslash j}'
	\bbY,
\end{align*}
It is known that in this case the mean squared error (MSE) of the predictions from $\widehat{\bm\Theta}_{{[k]} \backslash j}$ is smaller than that from $\widehat{\bm\Theta}$ under some mild conditions. Moreover, even though the elements of $\bmth_j$ are not equal to zero but small enough, the MSE of the predictions from  $\widehat{\bm\Theta}_{{[k]} \backslash j}$ is also smaller than that from $\widehat{\bm\Theta}$ (e.g., \cite{FujikoshiU10M}).
Therefore,  removing these ``non-significant" predictors from the full model improves the model. How to determine the significance of each predictor for the response and to select the true model from the full model are important problems in multiple regression model. Here,  the true model is the data-generating model and is denoted by
\begin{align}
 \label{eq1}
 M_{\bbj_*}:\ \ \bbY=\bbX_{\bbj_*}\bm\Theta_{\bbj_*}+\bbE\bgS^{1/2},
\end{align}
where  for all $j\in[k]\backslash \bbj_*$, $\bmth_j=\bf 0$.


To measure the significance of the predictors for the response, one can make use of the regression coefficients, the partial correlation or the multiple correlation coefficient between each predictor and the responses. However, these direct measures are unstable under high-dimensional regression because they all highly depend on the values of each predictor.  Instead, we consider removing one predictor vector from the full model and measuring how much ``information" we lose. Hence, we refer to this kind of statistics KOO (knock-one-out or kick-one-out) statistics in the technical report \citep{BaiF18S}. This KOO idea can be traced back to \cite{NishiiB88S},
 who investigated the discriminant analysis and canonical correlation analysis under fixed dimensions.  In this paper, we study the KOO statistics in high-dimensional responses and predictors.
 The  KOO method was motivated to address the issue of computational complexity in traditional AIC and    BIC methods. Moreover,  we find that the KOO method exhibits excellent stability, particularly in high-dimensional response settings.

There has been a lot of recent interest in variable selection problems for high-dimensional linear regression models because of the increasingly frequent and important in diverse fields of economics,  finance and machine learning.
For univariate (or single) response case (i.e., $p=1$), a variety of methods have been developed. This includes the penalty-based methods such as the least angle and shrinkage selection operator (LASSO, \cite{Tibshirani96R}), the
adaptive LASSO \citep{Zou06A}, the smoothly clipped absolute deviation (SCAD \cite{FanL01V}), the
 minimax convex penalty (MCP, \cite{Zhang10N}); the screening-based methods such as the sure independence screening (SIS, \cite{FanL08S}),  the covariate assisted screening estimates (CASE, \cite{KeJ14C}); the testing based methods such as the multiple testing approach by the false discovery rate (FDR) \citep{LiuL14H,XiaC18T} and many other related methods.
We refer to some recent review papers \citep{Shao97A,FanL10S,HuangB12S,AnzanelloF14R,HeinzeW18V,Desboulets18R,LeeC19S,CaiG23S} for more details. However,  there is comparatively less literature available for multiple responses (i.e. $p>1$). \cite{Xia17T} proposed a row-wise multiple testing procedure when $p$ is fixed; \cite{KongL17I} suggested a screening method via the distance correlations of the responses and each covariate for high-dimensional multi-response interaction models. For $p\to\infty$, following \cite{BaiF14S}, \cite{BaiC22A} investigated the asymptotic properties of the classical AIC, BIC and $C_p$ criteria; and \cite{SakuraiF20E,OdaY20F} established the consistencies of the KOO methods with AIC, BIC and $C_p$ thresholds under normality errors.

Main contributions of this paper are: (1) We obtain the asymptotic distributions of the KOO statistics $\cK_j$ for any $j=1,\dots,k$ under some mild moment conditions and 3L asymptotic framework:  large-response ($p\to\infty$), large-model ($k\to\infty$) and large-sample ($n\to\infty$). These theoretical results are applicable to many other model selection rules, such as growth curve model, multiple discriminant analysis,
principal component analysis,  canonical correlation analysis, and graphical model (e.g., \cite{FujikoshiS19C,OdaS20C,FujikoshiS23H}). (2) A scalable model selection method based on the KOO statistics is proposed. In practice, we use a multiplier bootstrap procedure to estimate the asymptotic thresholds. Simulation studies and real data analyses suggest the proposed model selection method performs favorably against the existing KOO methods with AIC, BIC and $C_p$ thresholds.

The remainder of this paper is organized as follows. In Section \ref{mainresults}, we state the main results of this paper, which include the almost sure limit and central limit theorem (CLT) of the KOO statistics. In Section \ref{selectionrule}, we propose a model selection method for the high-dimensional linear regression model based on the KOO statistics and information criteria.  In Sections \ref{simulation} and \ref{realdata},  we conduct some simulation studies and real data analysis, respectively.  Proofs of the main theorems under normality are given in Section  \ref{proofnorm}  since they are less technical and of independent interests. Proofs for general error distributions using random matrix theory are provided in the Appendix for interested readers.

%

\section{KOO statistics}\label{mainresults}
\subsection{Notation and preliminary}
We begin this section with some basic notation and definitions. In this paper, matrices and vectors are denoted by boldface uppercase and lowercase letters, respectively.
Let   $\bbI_n$  denote the identity matrix of order $n$,
\begin{equation}
\label{nhatS}
 \widehat\bgS_{\bbj}=\frac1n\bbY'\bbQ_{\bbj}\bbY,~~\bbQ_{\bbj}=\bbI_n-\bbP_{\bbj},~~
\bbP_{\bbj}=\bbX_{\bbj}(\bbX_{\bbj}'\bbX_{\bbj})^{-1}\bbX_{\bbj}',
\end{equation}
 $|{\bbj}|$  the cardinality of subset $\bbj$, and $|\widehat \bgS_{\bbj}|$  the determinant $\widehat \bgS_{\bbj}$.
Note that
 $\bbP_{\bbj}$ is an orthogonal projection of rank $|{\bbj}|$ onto the subspace spanned by $\bbX_{\bbj}$, and $\bbQ_{\bbj}$ is the orthogonal projection of rank $n-|{\bbj}|$ onto the
orthogonal complement subspace spanned by $\bbX_{\bbj}$. For brevity, we suppress the subscript $[k]$ for full model, and denote the true model subscript by $*$ and the subscript of model ${[k]} \backslash j$ by $j$ (e.g., $\bbQ:=\bbQ_{[k]}$, $\bbQ_{\bbj_*}:=\bbQ_{*}$ and $\bbQ_j:=\bbQ_{[k]\backslash j}$).
The identity matrix, all-zero matrix, all-one vector and all-zero vector, whose orders are often clear from the context and thus will not be indicated, are denoted by $\bbI$, $\bbO$, $\bf1$, and $\bf0$, respectively.
We call $j$ (or variable $\bbx_j$)  true if $j \in \bbj_*$, and $j$ (or variable $\bbx_j$) is spurious if $j \notin \bbj_*$.
For a matrix $\bbA$, its spectral norm and maximum norm are denoted by $\|\bbA\| $ and $\|\bbA\|_\infty$, respectively. The largest and smallest eigenvalues of $\bbA$ are denoted by $\lambda_{\max}^\bbA$ and $\lambda_{\min}^{\bbA}$, respectively. For two matrices $\bbA$ and $\bbB$ of the same dimension, $\bbA\circ \bbB$ stands for the Hadamard product of  $\bbA$ and $\bbB$. We denote the probability by $\P$, the   expectation by $\E$, and the trace by $\tr$. Define $ c_n:=p/n$ and $\alpha_n:= k/n$. Throughout this paper, we use $o(1)$ (respectively, $o_p(1)$, $o_{a.s.}(1)$) to denote (respectively, in probability, almost surely) scalar negligible entries. And the notations $O(1)$, $O_p (1)$ and $O_{a.s.} (1)$ are used in a similar way.

We now introduce  the KOO statistics
\begin{align*}
 \cK_j=\tr(\widehat \bgS^{-1}\widehat \bgS_j)-p.
\end{align*}
%
It is known that for testing
$\bm\theta_j=\bf 0$ under normality, 
the Lawley-Hotelling trace statistic can be expressed as
$(n-k)(\cK_j+p)$. Next  we will investigate the statistical properties of  $\cK_j$ under  the 3L asymptotic framework:  large-model ($k$), large-sample ($n$) and large-dimensional response ($p$). Before presenting our main theoretical results, we briefly analyze the statistic $\cK_j$. 
Let
\begin{gather*}
 \mathbf{a}_{j}=\mathbf{Q}_j\mathbf{x}_{j} /\left\|\mathbf{Q}_j \mathbf{x}_{j}\right\|.
\end{gather*}
By Sylvester's determinant theorem, we have that
\begin{align}\label{det_eq}
	n\widehat\bgS_j
=n\widehat\bgS+\bbY'\bba_{j}\bba_{j}'\bbY
\end{align}
which implies
\begin{align}\label{Cpj}
\cK_j=n^{-1}\mathbf{a}_{j}' \mathbf{Y}  \widehat\bgS^{-1}\mathbf{Y}' \mathbf{a}_{j}.
\end{align}
If we plug the model \eqref{fullmodel} into the $j$th KOO statistic, we have
\begin{align*}
 \cK_j=(\bba_{j}'\bbX_*\bm\Theta_*\bgS^{-1/2}+\bba_{j}'\bbE)(\bbE'\bbQ\bbE)^{-1}(\bbE'\bba_{j}+\bgS^{-1/2}\bm\Theta_*'\bbX_*'\bba_{j}).
\end{align*}
When $j$ is spurious (i.e., $j\notin\bbj_*$), $\bba_{j}$ and $\bbX_*$ are orthogonal. Thus, in this case,
\begin{align*}
 \cK_j= \bba_{j}'\bbE(\bbE'\bbQ\bbE)^{-1}\bbE'\bba_{j}.
\end{align*}
On the other hand, when $j$ is true (i.e., $j\in\bbj_*$), then 
\begin{align*}
 \cK_j\asymp \bba_{j}'\bbE(\bbE'\bbQ\bbE)^{-1}\bbE'\bba_{j}+\bbx_{j}'\bbQ_j\bbx_{j}\bmth_j'\bgS^{-1/2}(\bbE'\bbQ\bbE)^{-1}\bgS^{-1/2}\bmth_j.
\end{align*}
We emphasize that, for spurious $j$, the KOO statistics $ \cK_j$ are independent of the population covariance matrix $\bgS$. This property is highly desirable as it eliminates the involvement of unknown parameters. Furthermore, the term $\bbx_{j}'\bbQ_j\bbx_{j}\bmth_j'\bgS^{-1/2}(\bbE'\bbQ\bbE)^{-1}\bgS^{-1/2}\bmth_j>0$ becomes a key indicator to distinguish between spurious and true variables, with its value serving as a crucial factor in the determination process. The detailed discussion is stated in the next subsection.

\subsection{Asymptotical properties of the KOO statistics}
In this subsection, we state the asymptotics of the KOO statistics and illustrate how the KOO statistics of true variables behave differently from that of the KOO statistics of spurious variables under some mild conditions.
Before stating these results, we collect the needed conditions below. 
\begin{itemize}
\item[(C1)] As $\min\{k,p,n\}\to\infty$, $ c_n \to c \in (0,1)$ and $\alpha_n \to \alpha \in [0,1) $ satisfying $\alpha +c <1$.
\item[(C2)]  The true model $\bbj_* \subset [k]$, and  $|\bbj_*|$ is allowed to diverge as $k\to\infty$.
\item[(C3)]  The entries $e_{ij}$ of $\bbE$ are independent and identically distributed (i.i.d.) with zero means, unit
 variances, and finite fourth moments, i.e., $\tau=\E e_{ij}^4-3\in ( -\infty,\infty)$.
 \item[(C4)]  Matrix $\bbX'\bbX$ is positive definite for all $n >k+p$.
\end{itemize}
Our main results of this paper are stated below. The proofs, under normality of errors,  will be given in Section \ref{proofnorm}; and the general proofs without assuming normality of errors will be given in the Appendix.

Let
\begin{equation}
\label{delta-j}
\delta_j:=\delta_{nj}=p^{-1}\bbx_{j}'\bbQ_j\bbx_{j}\bmth_j'\bgS^{-1}\bmth_j .
\end{equation}
The following theorem identifies the strong limits of the KOO statistics $\cK_j$ for all $j\in[k]$.
\begin{thm}\label{limit1}
	Under conditions {\rm (C1) -- (C4)},  we have uniformly in $j\in[k]$,
$$
\cK_j= \begin{cases}
\frac{c_n}{1-c_n-\alpha_n} + o_{a.s.}(1), & \mbox{~if~}j\notin\bbj_*, \\
(1+\delta_j) \left[ \frac{c_n}{1-c_n-\alpha_n} + o_{a.s.}(1)\right], & \mbox{~if~}j\in\bbj_*.
\end{cases}
$$

\end{thm}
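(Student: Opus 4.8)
The plan is to analyze $\cK_j = \bba_j'\bbY\bgS^{-1/2}(\bbE'\bbQ\bbE)^{-1}\bgS^{-1/2}\bbY'\bba_j$ separately in the two cases, leveraging the decomposition already derived in the excerpt. In both cases the core object is the quadratic form $\bba_j'\bbE(\bbE'\bbQ\bbE)^{-1}\bbE'\bba_j$, and in the true case there is an extra additive term involving $\delta_j$. The first step is therefore to establish a uniform (over $j\in[k]$) strong-limit statement for $Z_j := \bba_j'\bbE(\bbE'\bbQ\bbE)^{-1}\bbE'\bba_j$. Since the $\bba_j$ are unit vectors lying in the column space of $\bbQ$ up to a correction, I would first reduce $\bbE'\bbQ\bbE$: writing $\bbW := \bbE'\bbQ\bbE$, which is a $p\times p$ Wishart-type matrix with $n-k$ degrees of freedom, condition (C1) guarantees $c + \alpha < 1$ so that $\bbW$ is a.s. invertible with smallest eigenvalue bounded away from zero. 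Then $Z_j$ is essentially $\bbv_j'\bbW^{-1}\bbv_j$ with $\bbv_j = \bbE'\bba_j$, an $n$-independent-looking random vector; one must handle the dependence between $\bba_j$ (through $\bbQ_j$) and $\bbE$.

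The key step in the spurious case is to show $Z_j = \tr(\bbW^{-1}) \cdot (1 + o_{a.s.}(1))/$(something), or more directly that $\bba_j'\bbE(\bbE'\bbQ\bbE)^{-1}\bbE'\bba_j \to c/(1-c-\alpha)$ a.s. I would argue as follows: decompose $\bbE = \bbP_*\bbE + \bbQ_*\bbE$ and note $\bba_j \perp \bbX_*$ so $\bba_j$ lives in the range of $\bbQ_*$; more carefully $\bbQ\bbE$ and $\bba_j$ interact so that $\bba_j'\bbE\bbW^{-1}\bbE'\bba_j$ conditionally on $\bbX$ has a structure where $\bba_j'\bbE$ is (asymptotically) a row with i.i.d.-type entries independent of the bulk of $\bbW$. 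Using standard concentration for quadratic forms (Bai–Silverstein style: $\bbv'\bbM\bbv \approx \tr(\bbM)$ when $\bbv$ has independent entries with bounded fourth moments and $\bbM$ has bounded norm), one gets $Z_j \approx \frac{1}{n}\tr((\frac1n\bbW)^{-1}) \cdot \frac{1}{?}$; invoking the Marchenko–Pastur law for $\frac1n\bbE'\bbQ\bbE$ with aspect ratio $p/(n-k) \to c/(1-\alpha)$, the Stieltjes transform at zero gives $\frac1p\tr((\frac1n\bbW)^{-1}) \to \frac{1}{1-\alpha}\cdot\frac{1}{1 - c/(1-\alpha)} = \frac{1}{1-\alpha-c}$, hence $\frac1n\tr((\frac1n\bbW)^{-1}) \to \frac{c}{1-\alpha-c}$. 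The finite-$n$ statement $\frac{c_n}{1-c_n-\alpha_n}$ follows by keeping the exact trace rather than its limit. The uniformity over $j\in[k]$ (there are $k = O(n)$ such statistics) is obtained by a union bound together with exponential-type tail inequalities for the quadratic-form concentration, which is why finite fourth moments suffice but require care—one typically truncates $e_{ij}$ at a slowly growing level, shows the truncated and original versions agree a.s., and then applies a Bernstein-type bound; since $k/n \to \alpha < \infty$, a polynomial-in-$n$ union bound is affordable.

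For the true case $j\in\bbj_*$, I would start from the asymptotic equivalence already recorded in the excerpt,
$$
\cK_j \asymp \bba_j'\bbE\bbW^{-1}\bbE'\bba_j + \bbx_j'\bbQ_j\bbx_j\,\bmth_j'\bgS^{-1/2}\bbW^{-1}\bgS^{-1/2}\bmth_j,
$$
and make this $\asymp$ precise (the cross terms $\bba_j'\bbE\bbW^{-1}\bgS^{-1/2}\bmth_j'\bbx_j'\bbQ_j\bbx_j$-type contributions should be lower order by Cauchy–Schwarz plus the quadratic-form concentration). The first summand is handled exactly as in the spurious case, giving $\frac{c_n}{1-c_n-\alpha_n} + o_{a.s.}(1)$. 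For the second summand, write $\bbu := \bgS^{-1/2}\bmth_j$ and observe $\bbx_j'\bbQ_j\bbx_j\cdot\bbu'\bbW^{-1}\bbu$; since $\bbW^{-1}$ concentrates around $\frac1n(\frac1n\bbW)^{-1}$ whose normalized trace limit we know, and $\bbu'\bbW^{-1}\bbu \approx \|\bbu\|^2 \cdot \frac1p\tr(\bbW^{-1})\cdot$(correction) — here one needs that $\bmth_j$ is a deterministic vector independent of $\bbE$, so $\bbu'\bbW^{-1}\bbu$ concentrates at $\|\bbu\|^2\cdot\frac1n\tr((\frac1n\bbW)^{-1})/p \cdot p = \frac1n\tr((\frac1n\bbW)^{-1})\cdot\frac{\|\bbu\|^2}{p}\cdot p$; being careful with normalizations, $\bbx_j'\bbQ_j\bbx_j\,\bbu'\bbW^{-1}\bbu \to \delta_j \cdot \frac{c_n}{1-c_n-\alpha_n}$ where $\delta_j = p^{-1}\bbx_j'\bbQ_j\bbx_j\,\bmth_j'\bgS^{-1}\bmth_j$ is exactly as defined in (\ref{delta-j}). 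Adding the two pieces gives $(1+\delta_j)[\frac{c_n}{1-c_n-\alpha_n} + o_{a.s.}(1)]$, with uniformity again from the union bound (now over $j\in\bbj_*$, a subset of $[k]$).

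The main obstacle I anticipate is the \textbf{uniformity over $j$ together with the only-fourth-moment assumption}: getting $o_{a.s.}(1)$ uniformly over up to $\asymp n$ indices forces quantitative tail bounds on quadratic forms $\bbv'\bbM\bbv - \tr\bbM$ that are strong enough to survive a union bound, and under mere finite fourth moments the naive Burkholder/Rosenthal bounds only give polynomial decay. The standard remedy—truncation and centering of the $e_{ij}$ at level $n^{\eta}$ for small $\eta>0$, plus Borel–Cantelli using that $\sum_n n\,\P(|e_{11}|>n^\eta) < \infty$ when $\eta > 1/4$ fails but one instead uses the fourth moment more cleverly via Markov at the fourth-moment level and summability of $\sum n \cdot n^{-1-\epsilon}$ after a more delicate martingale argument—is technical but routine in the random matrix literature, and it is presumably why the authors defer the general-error proof to the Appendix while giving the cleaner normal-case proof in Section \ref{proofnorm}. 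A secondary subtlety is the dependence between $\bba_j$ and $\bbE$ (through $\bbQ_j$ which involves $\bbX$ only, hence is independent of $\bbE$—so actually this is fine once one conditions on $\bbX$) and, in the true case, between $\bba_j$ and $\bbX_*$; but orthogonality $\bba_j\perp\bbX_*$ in the spurious case and the explicit decomposition in the true case make these manageable.
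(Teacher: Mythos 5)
Your overall route --- reduce $\cK_j$ to the noise quadratic form $Z_j:=\bba_j'\bbE(\bbE'\bbQ\bbE)^{-1}\bbE'\bba_j$ plus a signal term, identify the limit through the normalized trace of $(\bbE'\bbQ\bbE)^{-1}$ via the Marchenko--Pastur law with aspect ratio $p/(n-k)$, and obtain uniformity over the $O(n)$ indices by truncation plus super-polynomially decaying tail bounds --- is essentially the strategy of the paper's Appendix proof (which delegates everything to the concentration estimates \eqref{le3.1.1}--\eqref{le3.1.3}, i.e.\ Proposition 3.1 of Bai, Choi, Fujikoshi and Hu (2022) with $z\downarrow 0$); the paper also gives a much shorter proof under normality by writing $\cK_j$ exactly as a ratio of independent (noncentral) chi-squares. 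Your constants are correct, but two steps have genuine gaps. The first is the cross term in the true case: writing $\bbb=\bgS^{-1/2}\bm\Theta_*'\bbX_*'\bba_j$, so that $\bbb'\bbb=p\delta_j$, the cross term is $2\bba_j'\bbE(\bbE'\bbQ\bbE)^{-1}\bbb$, and Cauchy--Schwarz only bounds it by $2\sqrt{Z_j}\sqrt{\bbb'(\bbE'\bbQ\bbE)^{-1}\bbb}=O(\sqrt{\delta_j})$. When $\delta_j$ tends to a finite positive constant this is $O(1)$, not $o(1+\delta_j)$, so Cauchy--Schwarz cannot deliver the claimed error term; one genuinely needs the bilinear form $\bbb'(\bbE'\bbQ\bbE)^{-1}\bbE'\bba_j$ to concentrate at zero through a mean-zero cancellation, which is exactly what \eqref{le3.1.2} provides.

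The second gap is that your central step ``$\bbv'\bbM\bbv\approx\tr\bbM$ for $\bbv=\bbE'\bba_j$'' is not licensed as stated: $\bbv$ and $(\bbE'\bbQ\bbE)^{-1}$ are built from the same matrix $\bbE$ and are not independent, so the quadratic form does not simply concentrate at the trace. The correct general limit of $p^{-1}\bbr'\bbE\bbM^{-1}\bbE'\bbr$ for a deterministic unit vector $\bbr$ carries the extra term $-\frac{c_n^2\,\bbr'\bbQ\bbr}{(1-c_n-\alpha_n)(1-\alpha_n)}$ recording precisely this dependence (see \eqref{le3.1.3}); it vanishes here only because $\bba_j$ lies in the column space of $\bbX$, i.e.\ $\bbQ\bba_j=\mathbf{0}$. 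Your heuristic thus produces the right number for a reason you never isolate, and making it rigorous requires the leave-one-column-out resolvent/martingale expansion used in the Appendix (or, under normality, the exact Wishart reduction of Lemma \ref{lem3-1}). A minor further point: your truncation and Borel--Cantelli numerology is off; the standard device is truncation at $\eta_n\sqrt n$ with $\eta_n\to0$ combined with moment bounds of order $\ell\asymp\log n$ as in Lemma \ref{BYinq}, which give $o(n^{-t})$ tails for every $t$ and hence survive the union bound over $k$ indices under only a fourth moment.
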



As  $\bm\theta_j$  and $\bgS$ are typically unknown in practice,  the limits of   $\cK_j$'s  for   $j\in\bbj_*$ are unknown.  However, the fluctuations of the $\cK_j$'s for spurious variables are pretty simple, which is described in the following theorem.
\begin{thm}\label{clt1}
	Under  conditions {\rm (C1) -- (C4)}, for any fixed integer $q>0$ and $\{j_1,\dots,j_q
	\}\subset\bbj\backslash\bbj_*$, the random vector
	\begin{align*}
		\sqrt{p}\bbG_q^{-1/2}\left[(\cK_{j_1},\dots,\cK_{j_q})'-\frac{c_n}{1-c_n-\alpha_n}{\bf1}_q\right]
	\end{align*}
converges weakly to the standard $q$-dimensional Gaussian random vector, where
$$\bbG_q=\frac{c_n^2}{(1-\alpha_n-c_n)^2} \left[\frac{2(1-\alpha_n)}{(1-\alpha_n-c_n)}(\cA_q'\cA_q)^2+\tau(\cA_q\circ\cA_q)'(\cA_q\circ\cA_q)\right], $$ and
 $\cA_q=(\bba_{j_1},\dots,\bba_{j_q})$ is an $n\times q$ non-random matrix.
\end{thm}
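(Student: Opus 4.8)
The plan is to work with the explicit representation of the KOO statistics for spurious indices, namely $\cK_j = \bba_j'\bbE(\bbE'\bbQ\bbE)^{-1}\bbE'\bba_j$, established in the previous subsection. The first step is to replace the random matrix $(\frac1n\bbE'\bbQ\bbE)^{-1}$ by its almost sure limit. Since $\frac1n\bbE'\bbQ\bbE$ is a sample covariance-type matrix built from the $p$ columns of $\bbE$ projected by $\bbQ$ (of rank $n-k$), under (C1)--(C4) its smallest eigenvalue is bounded away from $0$ and its spectral distribution converges; the normalized trace $\frac1p\tr\!\big((\frac1n\bbE'\bbQ\bbE)^{-1}\big)$ converges a.s.\ to $(1-\alpha-c)^{-1}$ (the Mar\v{c}enko--Pastur-type constant already appearing in Theorem \ref{limit1}). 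Writing $\bbE'\bba_{j_r}$ as an $n$-dimensional vector $\bbz_r$ with i.i.d.\ mean-zero, unit-variance, finite-fourth-moment entries — this is where the orthogonality $\bba_{j_r}\perp\bbX_*$ and $\|\bba_{j_r}\|=1$ is used, and the vectors $\bbz_1,\dots,\bbz_q$ are jointly (approximately) independent because the $\bba_{j_r}$ are orthogonal — we have $\cK_{j_r} = \frac1n \bbz_r'\big(\frac1n\bbE'\bbQ\bbE\big)^{-1}\bbz_r$, wait, more precisely $\cK_{j_r} = \bbz_r' (\bbE'\bbQ\bbE)^{-1}\bbz_r$. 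I would condition on $\bbE$ in the directions orthogonal to $\bba_{j_1},\dots,\bba_{j_q}$ (or use a leave-those-columns-out argument) so that, conditionally, $\bbM:=(\bbE'\bbQ\bbE)^{-1}$ is independent of the $\bbz_r$'s, and then invoke a quadratic-form concentration/CLT.

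The second step is the fluctuation analysis. For each pair $(r,s)$, decompose $\bbz_r'\bbM\bbz_s = \tr(\bbM) \mathbbm{1}_{r=s} \cdot(\text{something}) + (\text{off-diagonal and diagonal fluctuations})$; concretely, $\bbz_r'\bbM\bbz_s - \mathbbm{1}_{r=s}\tr(\bbM\cdot\text{const})$ has a Gaussian limit after scaling by $\sqrt p$. The standard tool here is the CLT for bilinear forms $\bbz_r'\bbM\bbz_s$ with $\bbM$ deterministic (or independent of $\bbz$): one computes the conditional covariance structure, which for such forms is governed by two pieces — a ``$\tr(\bbM M^{\!\top})$''-type term giving the $(\cA_q'\cA_q)^2$ contribution once the dependence of $\bbM$ on the $\bba$'s is tracked back through $\bbE$, and a ``$\sum_i M_{ii}^2$''-type term weighted by the fourth-cumulant $\tau$, which produces the Hadamard-product term $(\cA_q\circ\cA_q)'(\cA_q\circ\cA_q)$. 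The scaling $\sqrt p$ enters because $\bbM$ has $p$-dependent size: $\tr(\bbM)\asymp p/n$ and $\tr(\bbM^2)\asymp p/n^2$, so the variance of $\cK_{j_r}$ is of order $p/n^2 \asymp 1/p$ up to constants. Assembling the $q\times q$ limiting covariance matrix gives exactly $\bbG_q$ as stated, with the prefactor $c_n^2/(1-\alpha_n-c_n)^2$ coming from $(\tr\bbM/n)^2$ and the bracket from the two variance components; the Cramér--Wold device then upgrades the entrywise convergence to joint weak convergence of the vector, and left-multiplication by $\bbG_q^{-1/2}$ standardizes it.

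The main obstacle I expect is the rigorous handling of the dependence between the resolvent-type matrix $(\bbE'\bbQ\bbE)^{-1}$ and the quadratic-form vectors $\bbz_r = \bbE'\bba_{j_r}$: they are built from the same noise matrix $\bbE$, so one cannot naively cite a ``deterministic $\bbM$'' CLT. The clean way around this is a rank-$q$ perturbation / leave-out argument — remove from $\bbE'\bbQ\bbE$ the contribution along the directions $\bba_{j_1},\dots,\bba_{j_q}$ using the Sherman--Morrison--Woodbury identity (already invoked in \eqref{det_eq} for the rank-one case), show the resulting correction is $o_{a.s.}(\sqrt p^{-1})$, and then apply the bilinear-form CLT conditionally on the ``leave-out'' matrix. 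Controlling these corrections uniformly, and verifying the Lindeberg-type condition for the martingale-difference decomposition of $\bbz_r'\bbM\bbz_s$ under only finite fourth moments (C3) rather than Gaussianity, is the technically delicate part; under the normality assumption of Section \ref{proofnorm} it simplifies considerably because $\bbz_r$ and the relevant part of $\bbE'\bbQ\bbE$ can be made exactly independent by rotational invariance, and the quadratic forms become literal chi-square and inverse-Wishart objects with known moments.
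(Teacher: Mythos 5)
Your sketch captures the correct variance structure (a $\tr(\bbM^2)$-type term producing $(\cA_q'\cA_q)^2$ and a fourth-cumulant term producing the Hadamard product), and under normality your ``condition on the resolvent'' strategy is essentially the paper's Section \ref{proofnorm} argument: there $\bbQ\bba_{j_r}={\bf 0}$ makes $\bbQ\bbE$ and $\bba_{j_r}'\bbE$ exactly independent, and the decomposition $\bbV'\bbW^{-1}\bbV=(\bbV'\bbV)^{1/2}\bbZ_{11\cdot2}^{-1}(\bbV'\bbV)^{1/2}$ with an independent Wishart $\bbZ_{11\cdot2}$ does the rest. One factual slip first: the $\bba_{j_r}$ are \emph{not} mutually orthogonal in general (each is orthogonal to the columns of $\bbX_*$ and to the range of $\bbQ$, but not to one another), so the vectors $\bbz_r=\bbE'\bba_{j_r}$ are genuinely correlated; if they were orthogonal, $\cA_q'\cA_q=\bbI_q$ and the covariance would collapse to a diagonal. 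You do state the general $\bbG_q$, but the ``(approximately) independent'' claim, if actually used, would lose the off-diagonal covariances. Relatedly, the entries of $\bbz_r$ have fourth cumulant $\tau\|\bba_{j_r}\|_4^4$ rather than $\tau$; this is exactly how the Hadamard term acquires its dependence on $\cA_q\circ\cA_q$ and must be tracked explicitly.

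The more serious gap concerns the non-Gaussian case, which is what (C3) actually covers. Your proposed remedy for the dependence between $(\bbE'\bbQ\bbE)^{-1}$ and $\bbE'\bba_{j_r}$ --- a rank-$q$ Sherman--Morrison--Woodbury ``leave-out along the directions $\bba_{j_1},\dots,\bba_{j_q}$'' --- does not address the real obstruction. Since $\bbQ\bba_{j_r}={\bf 0}$, the matrix $\bbE'\bbQ\bbE$ contains no component along those directions to remove; the dependence lives entirely in the shared entries $e_{ij}$, and no finite-rank perturbation in $\mathbb{R}^n$ decouples it. The paper's Appendix proof replaces your conditioning step by a martingale-difference decomposition over the $p$ \emph{columns} of $\bbE$, namely $\cM_1=p^{-1/2}\sum_{l=1}^p(\E_l-\E_{l-1})\tr(\bbE\bbM^{-1}\bbE'\cA_q\bcH\cA_q')$, combined with leave-one-column-out resolvent identities, moment bounds after truncation, and the martingale CLT; the conditional-variance computation there is what legitimately produces both terms of $\bbG_q$. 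Without this device (or an equivalent one), the ``bilinear-form CLT with $\bbM$ independent of $\bbz_r$'' you want to invoke is not applicable outside the Gaussian case, so as written the proposal proves the theorem only under normality.
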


Theorem \ref{clt1} is of independent interest:
As $\cK_j$'s are the {basic  statistics} for
 testing the hypothesis that $\bm\theta_j=\bf 0$,  this theorem can be used to obtain the CLTs of these statistics under the null hypothesis.
  Moreover, 	if $\tau=0$ (e.g.,  $\{e_{ij}\}$ come from a standard normal distribution), then the second term in   $\bbG_q$  vanishes; or if $\max_{j\in\bbj\backslash\bbj_*}\|\bba_j\|_\infty=o(1)$,  then the second term in the covariance matrix $\bbG_q$  tends to 0 as $n\to\infty$.

When $\tau\neq 0$, we propose an estimator of $\tau$,
$$\hat \tau=\left\{p^{-1}\tr[(\bbY'\bbQ\bbY-(n-k)\bbI)\circ(\bbY'\bbQ\bbY-(n-k)\bbI)]-2(n-k)\right\}/\tr(\bbQ\circ\bbQ),$$  which is shown to be unbiased and weakly consistent in Theorem \ref{hattau} below.

\begin{thm}\label{hattau}
	Under the conditions {\rm (C1) -- (C4)},
 $ \hat\tau$ is an unbiased and weakly consistent estimator of $\tau$.
\end{thm}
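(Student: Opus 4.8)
The plan is to read off unbiasedness from a single moment identity for quadratic forms, and to obtain weak consistency from Chebyshev's inequality applied after a truncation of the errors (truncation being needed because (C3) only supplies finite fourth moments). Throughout I would condition on $\bbX$; all estimates below are uniform over $\bbX$ satisfying (C1) and (C4), so the conditional statements transfer to the unconditional one. The key reduction is that $\bbX_*$ consists of a sub-collection of the columns of $\bbX$, hence $\bbQ\bbX_*={\bf 0}$ and, under the true model, $\bbY'\bbQ\bbY=\bgS^{1/2}\bbE'\bbQ\bbE\bgS^{1/2}$ carries no dependence on $\bmTh_*$. Since $\tr(\bbA\circ\bbA)=\sum_iA_{ii}^2$, only the $p$ diagonal entries of $\bbY'\bbQ\bbY$ enter the numerator of $\hat\tau$, and each of them is a quadratic form $\bbg_\ell'\bbQ\bbg_\ell$ in a vector $\bbg_\ell$ with i.i.d.\ coordinates of mean $0$, variance $1$ and fourth cumulant $\tau$.

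For unbiasedness I would use the elementary identities $\E[\bbg'\bbA\bbg]=\tr\bbA$ and $\mathrm{Var}(\bbg'\bbA\bbg)=2\tr(\bbA^2)+\tau\sum_iA_{ii}^2$, valid for symmetric $\bbA$ and such a $\bbg$. Taking $\bbA=\bbQ$ and using $\bbQ^2=\bbQ$ and $\tr\bbQ=n-k$, this gives $\E[(\bbg_\ell'\bbQ\bbg_\ell-(n-k))^2]=2(n-k)+\tau\,\tr(\bbQ\circ\bbQ)$ for every $\ell$. Averaging over $\ell\in[p]$, subtracting $2(n-k)$ and dividing by $\tr(\bbQ\circ\bbQ)$ then produces exactly $\E\hat\tau=\tau$. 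This step uses only (C3).

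For consistency I would first record that $\bbQ_{ii}=1-\bbP_{ii}\in[0,1]$ with $\sum_i\bbP_{ii}=k$, so $n(1-\alpha_n)^2\le\tr(\bbQ\circ\bbQ)=n-2k+\sum_i\bbP_{ii}^2\le n(1-\alpha_n)$; by (C1) this is of exact order $n$, and $p$ is of exact order $n$ as well, so $\{p\,\tr(\bbQ\circ\bbQ)\}^2$ is of order $n^4$. Writing $Z_\ell=(\bbg_\ell'\bbQ\bbg_\ell-(n-k))^2$, consistency amounts to $\{p\,\tr(\bbQ\circ\bbQ)\}^{-1}\sum_\ell(Z_\ell-\E Z_\ell)\pto0$, which equals $\hat\tau-\tau$ by the previous step. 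The obstacle is that the diagonal part $\sum_i\bbQ_{ii}(g_{i\ell}^2-1)$ of $\bbg_\ell'\bbQ\bbg_\ell-(n-k)$ has a fourth moment governed by $\E e_{11}^8$, which may be infinite under (C3), so $\mathrm{Var}(\hat\tau)$ need not be finite and Chebyshev cannot be applied directly. I would therefore pass to the truncated errors $\breve e_{ij}$ — the truncation of $e_{ij}$ at level $\eta_n\sqrt n$, re-centred and re-scaled to mean $0$ and variance $1$ — with $\eta_n\downarrow0$ chosen slowly enough that $\sum_{ij}\P(|e_{ij}|>\eta_n\sqrt n)\le c_n\,\eta_n^{-4}\,\E[e_{11}^4\mathbf 1\{|e_{11}|>\eta_n\sqrt n\}]\to0$, possible because $\E e_{11}^4<\infty$. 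On the complementary event the truncated statistic coincides with $\hat\tau$; the re-scaling perturbs $\hat\tau$ by $o_p(1)$; and $\E\breve e_{11}^4-3\to\tau$, so the truncated estimator stays asymptotically unbiased. For the truncated array, $|\breve e_{ij}|\le C\eta_n\sqrt n$ forces $\E\breve e_{11}^8=O(\eta_n^4n^2)$, and expanding $Z_\ell$ into its diagonal and off-diagonal parts gives $\E[\breve Z_\ell^2]=O(\eta_n^4n^3+n^2)=o(n^3)$ (the diagonal part supplying the $\eta_n^4n^3$ term; the off-diagonal part, whose fourth moment is already finite under (C3), contributing only $O(n^2)$). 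In the case $\bgS=\bbI$ the $\breve Z_\ell$ are i.i.d., so $\mathrm{Var}(\hat\tau^{\mathrm{trunc}})=\{p\,\tr(\bbQ\circ\bbQ)\}^{-2}\,p\,\mathrm{Var}(\breve Z_1)=o(1)$; for general $\bgS$ one additionally bounds $\sum_{\ell\ne\ell'}\mathrm{Cov}(\breve Z_\ell,\breve Z_{\ell'})$ using $\|\bgS\|=O(1)$, which does not change the order. Chebyshev's inequality then yields $\hat\tau^{\mathrm{trunc}}\pto\tau$, and combining these comparisons gives $\hat\tau\pto\tau$.

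The routine part is the moment identity behind unbiasedness. The main obstacle is the consistency bookkeeping: bounding the fourth moment of the squared centred quadratic forms $Z_\ell$ under only a finite-fourth-moment hypothesis, which is what forces the truncation–centralization–rescaling device, together with (when $\bgS\neq\bbI$) the control of the cross-response dependence among the $Z_\ell$'s.
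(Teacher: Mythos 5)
Your proposal is correct and takes essentially the same route as the paper: unbiasedness from the variance identity for quadratic forms, and weak consistency from a Chebyshev/second-moment bound after truncating the errors at level $\eta_n\sqrt{n}$ --- the paper imposes this truncation once for the whole appendix and then invokes the Bai--Silverstein quadratic-form moment inequality (its Lemma B.3) to obtain the same $O(\eta_n^4 n^3+n^2)$ fourth-moment bound that you derive by hand via the diagonal/off-diagonal split, together with the same lower bound $\tr(\bbQ\circ\bbQ)\geq n^{-1}(n-k)^2$. One remark: both your argument and the paper's identify the diagonal entries of $\bbY'\bbQ\bbY=\bgS^{1/2}\bbE'\bbQ\bbE\bgS^{1/2}$ with $\bbe_\ell'\bbQ\bbe_\ell$, i.e.\ effectively take $\bgS=\bbI$; for general $\bgS$ the fourth cumulant of the coordinates of $\bbE\bgS^{1/2}\bbu_\ell$ is $\tau\sum_j(\bgS^{1/2})_{j\ell}^4$ rather than $\tau$, so the exact-unbiasedness statement really requires that reduction (a caveat of the paper's own proof, not a defect specific to yours).
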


Combining Theorems \ref{clt1} and \ref{hattau},  the rejection region of the KOO statistics for testing whether some variables are spurious can be constructed.  However, in order to know the power, we also need to know the fluctuations for the statistics of the true variables. The following theorem states that under some additional assumptions, the KOO statistic of the true variable is comparable to that of the spurious variables.


\begin{thm}\label{clt2}
In addition to the conditions {\rm (C1) -- (C4)}, for $j\in\bbj_*$, we assume that
\begin{enumerate}
\item[(C5):] $\E e_{11}^3=0$.
\item[(C6):]As $\min\{p,n,k\}\to\infty$, $\|\bba_j\|_\infty=o(1)$, $\bbx_{j}'\bbQ_j\bbx_{j}\|{\bm\theta_j}'\bgS^{-1/2}\|^2_\infty=o(p)$.
\item[(C7):] As $\min\{p,n,k\}\to\infty$, $\delta_j$ tends to a constant.
\end{enumerate}
 Then,
\begin{align*}
 \sqrt{p}\left(\cK_j-\frac{c_n(1+\delta_j)}{1-c_n-\alpha_n}\right)/\sigma_{nj}\dto N(0,1),
\end{align*}
where $\sigma_{nj}^2={2c_n^2[(1-\alpha_n)(1+2\delta_j)+c_n\delta_j^2]}/{(1-\alpha_n-c_n)^3}$.
%
%
\end{thm}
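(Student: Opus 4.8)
The plan is to derive the CLT for a true variable's KOO statistic from the approximate decomposition already recorded in Section 2, namely
\[
\cK_j \asymp \bba_{j}'\bbE(\bbE'\bbQ\bbE)^{-1}\bbE'\bba_{j}+\bbx_{j}'\bbQ_j\bbx_{j}\,\bmth_j'\bgS^{-1/2}(\bbE'\bbQ\bbE)^{-1}\bgS^{-1/2}\bmth_j .
\]
First I would make this approximation rigorous: show the cross term $\bba_j'\bbE(\bbE'\bbQ\bbE)^{-1}\bgS^{-1/2}\bmth_j$, together with the error incurred in replacing $\cK_j$ by the sum of the quadratic form and the deterministic-direction form, is $o_p(1/\sqrt p)$. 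This is where conditions (C5) and (C6) enter: the vanishing third moment kills the leading contribution of the cross term after centering, and $\|\bba_j\|_\infty = o(1)$ plus $\bbx_j'\bbQ_j\bbx_j\|\bmth_j'\bgS^{-1/2}\|_\infty^2 = o(p)$ control the remainder. With the cross term negligible, $\cK_j$ is, up to $o_p(1/\sqrt p)$, a sum of two pieces built from the same matrix $(\bbE'\bbQ\bbE)^{-1}$.

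Next I would handle each piece. For the quadratic form $\bba_j'\bbE(\bbE'\bbQ\bbE)^{-1}\bbE'\bba_j$, Theorem \ref{clt1} with $q=1$ already gives $\sqrt p\,(\bba_j'\bbE(\bbE'\bbQ\bbE)^{-1}\bbE'\bba_j - c_n/(1-c_n-\alpha_n)) \dto N(0, g)$ with $g = \frac{c_n^2}{(1-\alpha_n-c_n)^2}\big[\frac{2(1-\alpha_n)}{1-\alpha_n-c_n} + \tau\|\bba_j\|_4^4\big]$; under (C6), $\|\bba_j\|_4^4 \le \|\bba_j\|_\infty^2 = o(1)$, so the $\tau$ term drops and the limiting variance is $\frac{2c_n^2(1-\alpha_n)}{(1-\alpha_n-c_n)^3}$. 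For the deterministic-direction term, write $u := \bgS^{-1/2}\bmth_j/\|\bgS^{-1/2}\bmth_j\|$ and note it equals $\bbx_j'\bbQ_j\bbx_j\,\|\bgS^{-1/2}\bmth_j\|^2\cdot u'(\bbE'\bbQ\bbE)^{-1}u$. Here $(\bbE'\bbQ\bbE)^{-1}$ is a sample-covariance-type inverse of order $p$ with aspect ratio $c_n/(1-\alpha_n)$, and for a fixed unit vector $u$ the bilinear form $u'(\bbE'\bbQ\bbE)^{-1}u$ is asymptotically $\frac1{n-k}\cdot\frac{1}{1-c_n/(1-\alpha_n)} = \frac1{n-k-p}$ with Gaussian fluctuations of order $p^{-1/2}$; the coefficient $p^{-1}\bbx_j'\bbQ_j\bbx_j\|\bgS^{-1/2}\bmth_j\|^2 = \delta_j$ then makes this term $\frac{\delta_j c_n}{1-\alpha_n-c_n} + O_p(p^{-1/2})$, with leading fluctuation variance of order $\delta_j^2 c_n^3/(1-\alpha_n-c_n)^3$ times a constant. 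The constant $2$ and the $(1-\alpha_n)(1+2\delta_j)$ shape in $\sigma_{nj}^2$ come from combining the joint fluctuations of these two bilinear forms in the \emph{same} random matrix — they are correlated, and the cross-covariance contributes the $4\delta_j$ cross term that, added to $1+0$ from the pure quadratic part and $\delta_j^2$ from the pure deterministic part (with the $c_n\delta_j^2$ correction from the higher-order term in the bilinear-form expansion), assembles into $(1-\alpha_n)(1+2\delta_j)+c_n\delta_j^2$.

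Concretely, I would set up a single CLT for the pair $\big(\bba_j'\bbE(\bbE'\bbQ\bbE)^{-1}\bbE'\bba_j,\ u'(\bbE'\bbQ\bbE)^{-1}u\big)$ (after appropriate centering and $\sqrt p$ scaling) using the same machinery behind Theorem \ref{clt1} — resolvent expansions / CLT for bilinear forms of sample covariance matrices under finite fourth moments — then apply the delta method with the map $(s,t)\mapsto s + \delta_j(1-\alpha_n)t\cdot\frac{1-\alpha_n-c_n}{c_n}$ (chosen so the deterministic-direction coefficient $\bbx_j'\bbQ_j\bbx_j\|\bgS^{-1/2}\bmth_j\|^2 = p\delta_j$ is matched and $u'(\bbE'\bbQ\bbE)^{-1}u$ is rescaled to have the right centering), and (C7) guarantees $\delta_j$ has a limit so the limiting variance $\sigma_{nj}^2$ is well-defined. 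Slutsky then transfers the conclusion from the approximation back to $\cK_j$ itself.

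The main obstacle, I expect, is not the two marginal CLTs — Theorem \ref{clt1} essentially supplies one and the bilinear-form CLT for $u'(\bbE'\bbQ\bbE)^{-1}u$ is standard — but rather (i) pinning down the exact cross-covariance between $\bba_j'\bbE(\bbE'\bbQ\bbE)^{-1}\bbE'\bba_j$ and $u'(\bbE'\bbQ\bbE)^{-1}u$, since $\bba_j$ and $u$ live in different spaces (the $\bba_j$ direction is in $\mathbb R^n$ acting on the left, $u$ is in $\mathbb R^p$ acting inside the inverse) and their joint Gaussian limit requires tracking the mixed fourth-moment terms carefully, and (ii) controlling the non-leading term in the expansion of $u'(\bbE'\bbQ\bbE)^{-1}u$ precisely enough to produce the $c_n\delta_j^2$ summand rather than just the leading $(1-\alpha_n)(1+2\delta_j)$ part — this is the term that distinguishes $\sigma_{nj}^2$ from a naive guess and needs the second-order resolvent expansion. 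Making the heuristic $\asymp$ into a rigorous $o_p(p^{-1/2})$ bound under exactly (C5)–(C6), with uniformity not needed here since $j$ is fixed, is the remaining bookkeeping.
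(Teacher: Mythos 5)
There is a genuine gap, and it sits at your very first step. Writing $\bbb=\bgS^{-1/2}\bm\Theta_*'\bbX_*'\bba_j$ (so that $\|\bbb\|^2=p\delta_j$) and $\bbW=\bbE'\bbQ\bbE$, the identity $\cK_j=(\bbb+\bbE'\bba_j)'\bbW^{-1}(\bbE'\bba_j+\bbb)$ is exact, and the only thing hidden by the ``$\asymp$'' of Section 2 is the linear cross term $2\,\bbb'\bbW^{-1}\bbE'\bba_j$. That term is \emph{not} $o_p(p^{-1/2})$: since $\bbQ\bba_j=\bf0$, in the Gaussian case $\bbE'\bba_j\sim N_p(\bf0,\bbI_p)$ is independent of $\bbQ\bbE$, so conditionally on $\bbW$ the cross term is centered with variance $4\,\bbb'\bbW^{-2}\bbb\asymp 4p\delta_j/n^2$; it is therefore of order exactly $p^{-1/2}$ and supplies the entire $2\delta_j$ inside $(1+2\delta_j)$. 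This is visible in the paper's normality proof, where $\cK_j=\chi^2(p;p\delta_j)/\chi^2(\tilde m)$ and the numerator's variance $2p(1+2\delta_j)$ gets its $4p\delta_j$ precisely from the linear part of the noncentral chi-square $\|\bbv_0+\bbb\|^2$; in the general proof it is the $4\E_{l-1}(\E_l\bmga_l'\E_l\bmga_l)$ contribution (with $\bmga_l\propto b_l$) to the martingale conditional variance. The compensation you invoke --- that the cross-covariance of $\bba_j'\bbE\bbW^{-1}\bbE'\bba_j$ with $u'\bbW^{-1}u$ supplies a $4\delta_j$ term --- does not hold: conditioning on $\bbW$ replaces the first form by $\tr\bbW^{-1}$ plus a piece uncorrelated with the second, and $p\delta_j\,\Cov(\tr\bbW^{-1},u'\bbW^{-1}u)=O(\delta_j/n^2)$, which still vanishes after the CLT normalization by $p$. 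Carried out as written, your plan yields the variance $2c_n^2[(1-\alpha_n)+c_n\delta_j^2]/(1-\alpha_n-c_n)^3$, missing the term $4\delta_j c_n^2(1-\alpha_n)/(1-\alpha_n-c_n)^3$ of $\sigma_{nj}^2$.

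A related misattribution: (C5) is not what makes the cross term negligible (it cannot, since the cross term is not negligible). In the paper's argument it kills the interaction $4\E e_{11}^3\,\tr(\E_l\bmGa_l\circ\E_l\bmga_l{\bf1}')$ between the quadratic and linear parts of the martingale differences, so that the limiting variance is simply the sum of the quadratic-part and linear-part contributions. The fix is to keep all three terms of $(\bbb+\bbE'\bba_j)'\bbW^{-1}(\bbE'\bba_j+\bbb)$ and prove a single joint CLT --- either via the noncentral chi-square ratio under normality or via the martingale decomposition in general; the remainder of your outline (the two marginal CLTs, the role of (C6)--(C7), and the origin of the $c_n\delta_j^2$ term in the second-order expansion of the denominator) is essentially right.
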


\subsection{Some remarks on  the theorems}
\begin{remark}
The condition,  $c>0$, in \rm (C1) is due to technical reasons: our main tools are from random matrix theory (RMT) and RMT generally assumes the limit $p/n$ exists and is positive.
Note further that we make no explicit use of the unknown limits $\alpha$ and $c$ in all the theorems below. Rather,  we used $\alpha_n$ and $c_n$,  which are always positive, in our results.
	
\end{remark}
\begin{remark}
If the model size $k$ is greater than the sample size $n$ but the true model size $k_*$ is fixed, one can first apply screening methods (such as the sure independence screening method based on the distance correlation \citep{LiZ12F}, and interaction pursuit via distance correlation \citep{KongL17I}) to ensure condition (C1) holds. For further details on the screening methods, see  \citep{FanL08S,FanL10S}.
\end{remark}
\begin{remark}
	If the entries $ e_{ ij}$ of $\bbE$ are independent but not necessarily
	 identically distributed,  our results in this paper continue to hold provided an additional Lindeberg-type condition:
 \begin{align*}
  \frac{1}{\eta^{4} n^{2}} \sum_{i, j} \mathbb{E}\left[\left|e_{i j}\right|^{4}\mathbbm{1}\left\{\left|e_{i j}\right| \geq \eta \sqrt{n}\right\}\right]=o(1),
\end{align*}
for any $\eta>0$.  Here, $\mathbbm{1}\{\cdot\}$ stands for the indicator function. The proofs are analogous but slightly more tedious, and we do not pursue this extension in this paper.
\end{remark}

\begin{remark}
From Theorems \ref{clt1} and \ref{clt2}, we can theoretically investigate the asymptotic power of whether a variable is spurious.   However, for testing whether a variable is true, the asymptotic distribution of the true KOO statistic  (i.e., Theorem \ref{clt2}) cannot be applied directly since $\delta_j$ is unknown when $j$ is a true variable. 
Variable selection problem will be discussed in the next section in detail.
%
\end{remark}
\section{Selection criteria based on the KOO statistics}\label{selectionrule}

Theorem \ref{limit1} highlights the crucial role of  $\delta_j$  in differentiating the true variables from the spurious ones. For spurious variables,  $\cK_j$'s should be close to the point ${c_n}/{(1-c_n-\alpha_n)}$  when $n,p,k$ are large. Since $\delta_j$ is always positive for $j\in\bbj_*$, the true variables would be separated from ${c_n}/{(1-c_n-\alpha_n)}$ and thus can be identified by the largest  $\cK_j$'s. Moreover, we can deduce a strongly consistent estimator for the true variables from this theorem.
Let
\begin{gather*}
 \hat \bbj_\vartheta=\left\{j\in{[k]}|\cK_{j}>\frac{c_n(1+\vartheta)}{1-\alpha_n-c_n}\right\},~~\vartheta>0.
\end{gather*}
Then, we have the following corollary of Theorem \ref{limit1}.
\begin{corollary}\label{gen_kooth}
Assume that  conditions {\rm (C1) -- (C4)} hold and   $\lim\delta_j>0$  for all $j\in \bbj_{*}$.
	Then, for any fixed value $\vartheta\in(0,\min_{j\in\bbj_*}\{\lim\delta_j\})$,  
	\begin{align*}
 \lim_{n,p\to\infty}\hat \bbj_\vartheta\asto\bbj_*.
\end{align*}
	\end{corollary}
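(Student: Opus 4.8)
The plan is to derive Corollary~\ref{gen_kooth} directly from the uniform strong limit in Theorem~\ref{limit1}. The key observation is that $\hat\bbj_\vartheta$ agrees with $\bbj_*$ as soon as the $\cK_j$'s are close enough to their strong limits and the threshold $c_n(1+\vartheta)/(1-\alpha_n-c_n)$ sits strictly between the spurious limit $c_n/(1-c_n-\alpha_n)$ and the smallest true limit $(1+\delta_j)c_n/(1-c_n-\alpha_n)$. First I would fix $\vartheta\in(0,\min_{j\in\bbj_*}\{\lim\delta_j\})$ and choose $\varepsilon>0$ so that $2\varepsilon<\min_{j\in\bbj_*}\{\lim\delta_j\}-\vartheta$; note this is possible precisely because $|\bbj_*|$, while allowed to diverge, indexes a set over which $\lim\delta_j$ is bounded below by a positive constant under the hypothesis $\lim\delta_j>0$ for all $j\in\bbj_*$ (more carefully, one should read the assumption as $\liminf_{n}\min_{j\in\bbj_*}\delta_{nj}>0$, which is what makes the "separation gap" uniform).

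Next I would invoke Theorem~\ref{limit1} to write, almost surely and uniformly in $j$, $\cK_j=c_n/(1-c_n-\alpha_n)+o_{a.s.}(1)$ for spurious $j$ and $\cK_j=(1+\delta_{nj})c_n/(1-c_n-\alpha_n)+o_{a.s.}(1)$ for true $j$. Since $c_n\to c\in(0,1)$ and $\alpha_n+c_n\to\alpha+c<1$ by (C1), the factor $c_n/(1-c_n-\alpha_n)$ is bounded and bounded away from $0$ for all large $n$, say it lies in $[\kappa_1,\kappa_2]$ with $0<\kappa_1\le\kappa_2<\infty$. Then on an event of probability one, for all $n$ large enough: for every spurious $j$, $\cK_j<\kappa_2\cdot 1 + \kappa_1\varepsilon \le c_n(1+\vartheta)/(1-\alpha_n-c_n)$ once $\delta_{nj}\ge 0$ forces the threshold above the spurious cluster by the margin $\kappa_1\vartheta$ — more directly, $\cK_j - c_n(1+\vartheta)/(1-\alpha_n-c_n) = -\vartheta\,c_n/(1-\alpha_n-c_n)+o_{a.s.}(1)\le -\kappa_1\vartheta + o_{a.s.}(1)<0$; and for every true $j$, $\cK_j - c_n(1+\vartheta)/(1-\alpha_n-c_n) = (\delta_{nj}-\vartheta)c_n/(1-\alpha_n-c_n)+o_{a.s.}(1)\ge 2\varepsilon\kappa_1 + o_{a.s.}(1)>0$. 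Hence $\hat\bbj_\vartheta$ contains no spurious index and contains every true index, i.e.\ $\hat\bbj_\vartheta=\bbj_*$, for all $n$ sufficiently large almost surely, which is exactly $\hat\bbj_\vartheta\asto\bbj_*$.

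The only genuine subtlety — and the step I would treat most carefully — is the uniformity of the $o_{a.s.}(1)$ over the two (possibly growing) index sets together with the uniformity of the separation constant. Theorem~\ref{limit1} already states the error is $o_{a.s.}(1)$ uniformly in $j\in[k]$, so the probabilistic part is handed to us; what remains is purely deterministic bookkeeping: one needs $\liminf_n \min_{j\in\bbj_*}\delta_{nj}$ bounded away from $\vartheta$ from above, which is guaranteed by the stated hypothesis read uniformly (and is consistent with (C6)--(C7) type control elsewhere in the paper). I would state this explicitly as the interpretation of "$\lim\delta_j>0$ for all $j\in\bbj_*$" and then the argument closes with no further input. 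No random matrix machinery is needed here beyond Theorem~\ref{limit1}; the corollary is essentially a one-line consequence once the gap structure is spelled out.
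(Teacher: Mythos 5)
Your proof is correct and follows exactly the route the paper intends: the corollary is stated as an immediate consequence of the uniform strong limits in Theorem \ref{limit1}, with the threshold $c_n(1+\vartheta)/(1-\alpha_n-c_n)$ separating the spurious cluster from the true cluster, and the paper gives no further argument. Your additional remark that ``$\lim\delta_j>0$ for all $j\in\bbj_*$'' must be read as a uniform lower bound (i.e.\ $\liminf_n\min_{j\in\bbj_*}\delta_{nj}>0$) when $|\bbj_*|$ diverges is a valid and worthwhile clarification of a point the paper leaves implicit.
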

\begin{remark}
This corollary implies the strong consistency for the KOO methods with AIC, BIC and $C_p$ thresholds if $\delta_j$ satisfies the conditions.
\end{remark}

In practice, however, choosing a suitable    $\vartheta$ is important but very challenging because (1) the largest spurious KOO statistic may converge to its limit slowly; (2) the spurious KOO statistics are correlated; and (3) the limits of the true KOO statistics are unknown.  Hence, we propose a high-dimensional multiplier bootstrap procedure to approximate the distribution of the largest spurious KOO statistic $\cK_j$, from which a selection criterion for the linear regression model \eqref{fullmodel} under the 3L framework is formulated.


 Denote the estimator of the true model be
 \begin{align*}
 \hat\bbj_*=\{j\in[k]:\cK_j>K_\nu\},
\end{align*}
where $K_\nu$ is the critical value with at significance level $\nu$, which is estimated by  Algorithm 1.

\begin{algorithm}[!ht]\label{alg1}
\DontPrintSemicolon

\KwInput{$\nu$,  $\bbY$, $\bbX$ and estimator $\hat\tau$ based on $\{\bbY,\bbX\}$}
    \KwOutput{Estimator      $\hat{K}_\nu$}

Compute $\cA_k=(\bba_{1},\dots,\bba_{k})$. 

Generate a random matrix $\tilde\bbE$ with $n\times p$ i.i.d. zero mean, unit variance and $\hat\tau$ excess kurtosis elements.


Compute $\bbK=\cA_k'\tilde\bbE(\tilde\bbE'\bbQ\tilde\bbE)^{-1}\tilde\bbE'\cA_k$.

Compute the largest value of the diagonal elements of $\bbK$ and denote it by $\tilde\cK^{(1)}$.

Repeat $N$ times of the above procedures 2--4, and obtain $\{\tilde\cK^{(1)},\dots,\tilde\cK^{(N)}\}$.

Compute the $100(1 -\nu)$th quantile of $\{\tilde\cK^{(1)},\dots,\tilde\cK^{(N)}\}$  and  denote it by $\hat{K}_\nu$.

\caption{Estimation of $K_\nu$ }
\end{algorithm}


From Theorem \ref{clt1},  the critical value $K_\nu$ may depend on $\|\bba_i\|_\infty$ or the excess kurtosis but not on the exact distribution of the errors.   The boxplots of the spurious KOO statistics $\cK_j$'s for different distributions presented in Fig. \ref{difdis} support this claim.
 In this simulation, we set  $\bm\Theta=\bbO$, $\boldsymbol{\Sigma}=\bbI$ and generate two predictor matrices: the first one is a $2000\times 600$ matrix with i.i.d. entries from $
 U(1,5)
 $; and the second one is a $2000\times 600$ diagonal matrix.  As the values of the diagonal elements do not affect the result, the diagonal entries were chosen to be 1 in our simulation. We examine six different distributions of the errors: standard normal distribution $N(0,1)$,   standardized uniform distribution $U(0,1)$, standardized Bernoulli distribution $B(1,\rho)$ with parameter $\rho=(6-\sqrt{6})/12$, standardized chi-square distribution with 12 degrees of freedom $\chi^2(12)$,  standardized $t$-distribution with 10 degrees of freedom $t_{10}$, standardized Poisson distribution with parameter 1 $Pois(1)$, standardized exponential distribution with rate parameter 1 $Exp(1)$ and standardized chi-square distribution with 2 degrees of freedom $\chi^2(2)$. Note that $\|\bba_i\|_\infty \to0$ for the random predictor matrix, $\|\bba_i\|_\infty =1$ for the rectangular diagonal predictor matrix, the excess kurtosis of $N(0,1)$ is 0, the excess kurtoses of $Exp(1)$ and $\chi^2(2)$ are 2, the excess kurtoses of $\chi^2(12)$, $t_{10}$ and $Pois(1)$ are 1, and the excess kurtoses of $U(0,1)$ and $B(1,(6-\sqrt{6})/12)$ are $-6/5$. Hence, in practice for convenience, we can use standardized $\chi^2$ distribution with $12/\hat\tau$ degrees of freedom if $\hat\tau>0$ and standardized Bernoulli distribution $B(1,\rho)$ with parameter $\rho$ satisfying $\rho(1-\rho)=1/(6-\hat\tau)$ if $\hat\tau<0$. Of course, if $\max_i\|\bba_i\|_\infty\to0$, we can use the standard normal distribution directly.
\begin{figure}[htbp!]
\center
\subfigure[Random predictor matrix]{
		\includegraphics[width=7.2cm,height=5cm]{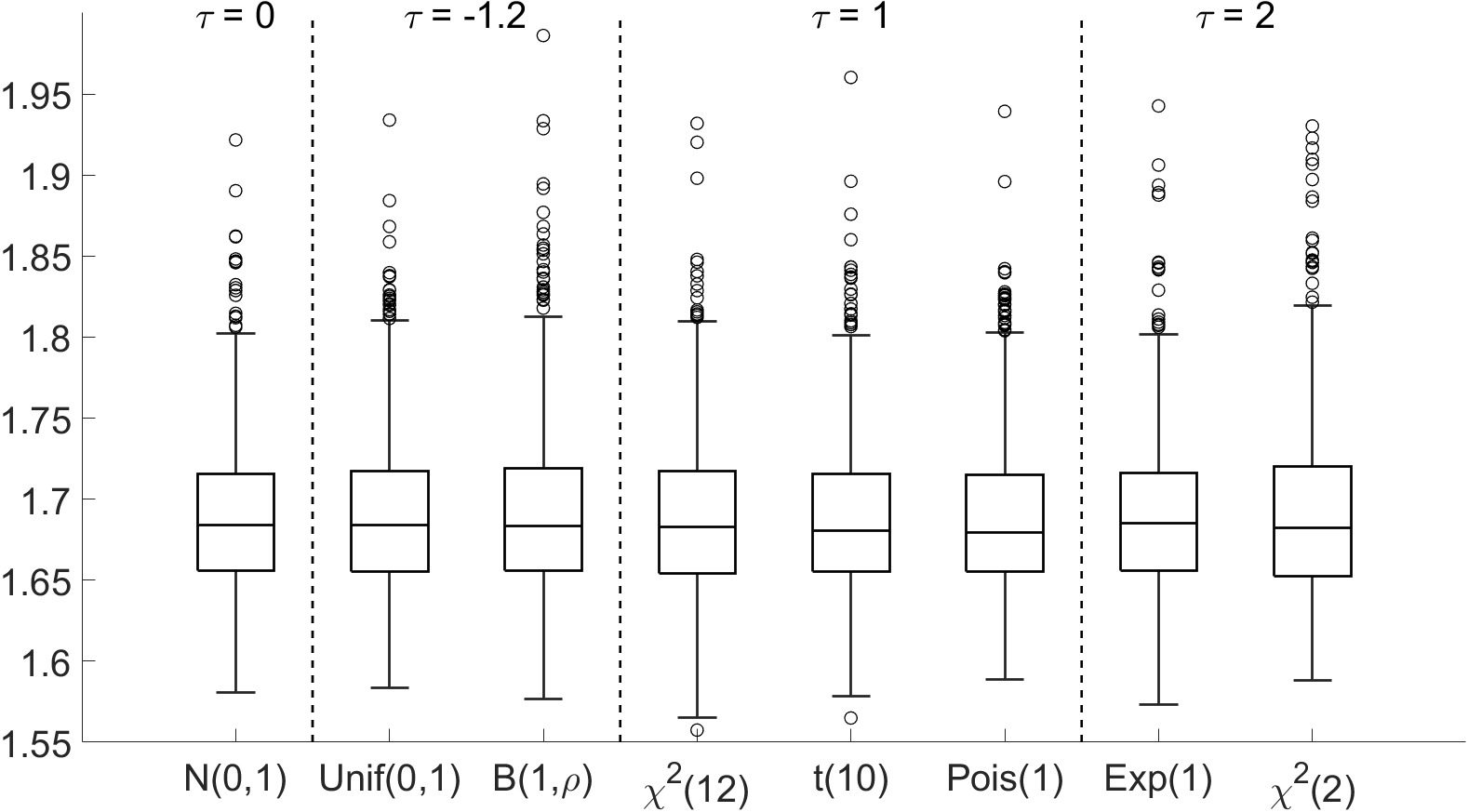}}
		\subfigure[Rectangular diagonal predictor matrix]{
		\includegraphics[width=7.2cm,height=5cm]{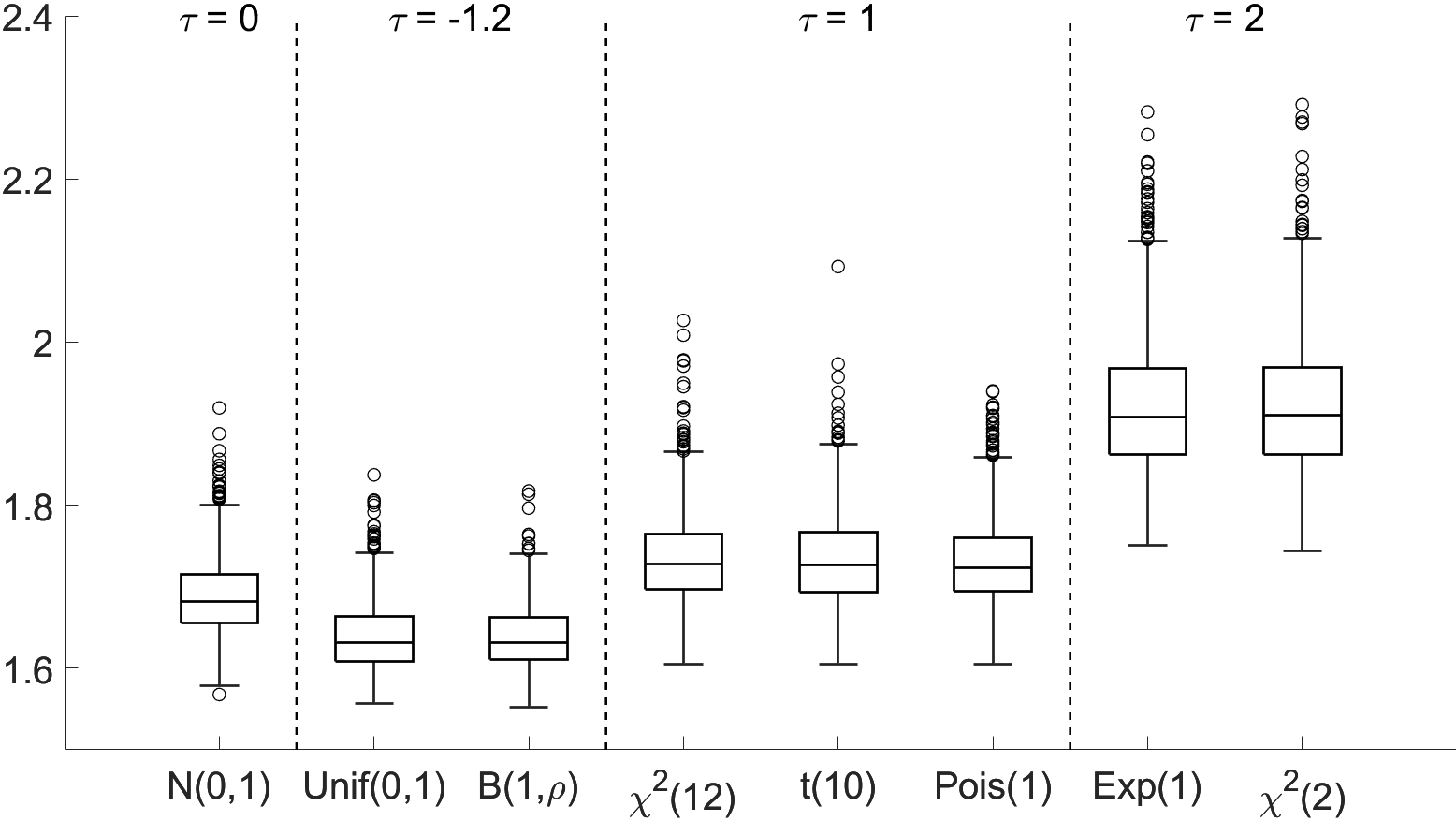}}
		\caption{Boxplots of the spurious KOO statistics
		 $\{\cK^{(j)}$, ~$j=1,\dots,1000\}$ with six different normalized distributions and  two  predictor matrices. The y-axis represents the values of $\cK^{(j)}$'s.}
		\label{difdis}
\end{figure}


\section{Simulation studies}\label{simulation}
In this section, we numerically examine the properties of the proposed KOO method in a 3L framework with different settings. For comparison, we also report the results of KOO methods with AIC, BIC and $C_p$ thresholds as proposed by \cite{NishiiB88S} and implemented by \cite{FujikoshiS19C,OdaS20C,NakagawaW21K,Fujikoshi22H}. Specifically, the KOO methods with AIC, BIC and $C_p$ thresholds, respectively, choose the model
\begin{eqnarray*}
 \hat \bbj_*^{A}&=&\{j\in{[k]}:\log(1+\cK_j)>2c_n\}, \\
 \hat \bbj_*^{B}&=&\{j\in{[k]}:\log(1+\cK_j)>\log(n)c_n\},\\
 \hat \bbj_*^{C}&=&\{j\in{[k]}:(1-\alpha_n)\cK_j>2c_n\}.
\end{eqnarray*}
For simplicity, we abbreviate the KOO method with our bootstrapping threshold to KBT, the KOO method with AIC threshold as KAIC. Similar abbreviations KBIC and KCp are used.
We consider the following two settings.
\begin{itemize}
	\item[]Setting I: Fix $k_*=5$, $p/n=\{0.2, 0.4\}$ and $k/n=\{0.2,0.4\}$ with  $n=100, 500, 1000, 2000$.  The results for $n=2000$ are given in the Appendix.
	Set $\boldsymbol{\Sigma}=\bbI$, $\bbX=(x_{ij})_{n\times k}$, $\Theta_{\bbj_*}={\bf 1}_{5}{\bm\theta}_*$ and $\Theta=(\Theta_{\bbj_*},\bf{0})$, where $\{x_{ij}\}$ are i.i.d. generated from the continuous uniform distributions $U(1,5)$, ${\bf 1}_{5}$ is a five-dimensional vector of ones and ${\bm\theta}_*=((-0.5)^0,\dots,(-0.5)^{p-1})$.

		\item[]Setting II: Same as Setting I, except $\bbX=(\bbI_k,\bbO_{k\times (n-k)})'$ and $\Theta_{\bbj_*}=\sqrt{n}{\bf 1}_{5}{\bm\theta}_*$.
\end{itemize}

For Setting I, we consider three cases for the distribution of $\bbE$:
\begin{itemize}
\item[(i)] Standard normal distribution, $e_{ij} \sim N(0,1)$;
\item[(ii)] Standardized $t$ distribution with three degrees of freedom, i.e., $e_{ij} \sim t_3/\sqrt{5/3}$;
\item[(iii)] Standardized chi-square distribution with two degrees of freedom, i.e., $e_{ij} \sim (\chi^2(3)-3)/\sqrt{6}$.
\end{itemize}
 Since  $\|\bba_j\|_\infty\to0$ in  Setting I, we use $\tilde\bbE$ with the standard normal distribution to estimate $\hat{K}_\nu$. We emphasize that the excess kurtosis of distribution $t_3$ is infinite.

For Setting II, we consider three cases for the distribution of $\bbE$:
\begin{enumerate}
	\item[(iv)] Standardized exponential distribution with rate parameter 1, i.e.,  $e_{ij} \sim Exp(1)-1$;
	\item[(v)] Standardized Poisson distribution with parameter 1, i.e., $e_{ij} \sim Pois(1)-1$;
	\item[(vi)] Standardized uniformly distribution, i.e., $e_{ij} \sim U(-\sqrt{3},\sqrt{3}) $.
\end{enumerate}
Since   $\|\bba_j\|_\infty=1$ in Setting II, we use $\tilde\bbE$ with standardized $\chi^2$ distribution and standardized Bernoulli distribution, respectively, to estimate $\hat{K}_\nu$ with some suitably chosen parameter values. 

In all the simulation studies, we choose two critical points in the KOO methods:.
$$
 \hat\bbj_*^{(0)}=\{j\in[k]:\cK_j>\hat{K}_0\} \quad \mbox{ and } \quad
 \hat\bbj_*^{(5)}=\{j\in[k]:\cK_j>\hat{K}_{0.05}\},
$$
where $\hat{K}_0$ and $\hat{K}_{0.05}$ are the largest and the $95$th percentile of 1,000 bootstrap values, respectively.

We first explain our choices of the settings and the distributions. Since  the KOO criteria depend on the values $\delta_j=p^{-1}\bbx_{j}'\bbQ_j\bbx_{j}\theta_j'\bgS^{-1}\theta_j $, it suffices to set $\bgS=\bbI$ and vary $\bm\Theta_*$ and $\bbX$  in conducting our simulation studies. Settings I and II both ensure $\delta_j$ are bounded above. For the case   $\delta_j\to\infty$, the KOO statistics for the true variables and spurious variables are well separated, and all the compared selection methods will not show significant differences.
The selection of distributions comprises five continuous distributions and one discrete distribution. The distribution described in (ii) only has finite second moment. This selection was made to investigate the implications of not satisfying the condition of finite fourth moment.
To measure in greater detail the performance of these selection rules, the numbers of times, in 1000 repetitions, a selection rule under-specifies the true model, exactly identifies it and over-specifies it were tabulated. 
When the selection rule over-specifies the true model, we also report the average number of spurious variables selected in the last row of each sub-table. Due to space consideration,   we  present  selected results, but typical,  of Setting I (i) and Setting II (iv) in Tables \ref{KOO1} and \ref{KOO4}, respectively. Full set of results, including those for $n=2000$, can be found in the Appendix.

 \begin{table}[htbp]
 \setlength{\tabcolsep}{2.8pt}
 \center
	\begin{tabular}{|c|ccccc|ccccc|ccccc|}
		\hline
\multicolumn{16}{|c|}{$\alpha=0.2$, $c=0.4$} \\
\cline{1-16}
		 &\multicolumn{5}{c|}{$n=100$} &\multicolumn{5}{c|}{$n=500$} &\multicolumn{5}{c|}{$n=1000$} \\
		 \cline{2-16}
		& ${\hat\bbj_*^{A}}$& $\hat\bbj_*^{B}$&$\hat\bbj_*^{C}$&$\hat\bbj_*^{(0)}$&$\hat\bbj_*^{(5)}$&${\hat\bbj_*^{A}}$& $\hat\bbj_*^{B}$&$\hat\bbj_*^{C}$&$\hat\bbj_*^{(0)}$&$\hat\bbj_*^{(5)}$&${\hat\bbj_*^{A}}$& $\hat\bbj_*^{B}$&$\hat\bbj_*^{C}$&$\hat\bbj_*^{(0)}$&$\hat\bbj_*^{(5)}$\\

\hline
U-S & 0 & 938 & 0 & 640 & 19 & 0 & 1000 & 0 & 0 & 0 & 0 & 1000 & 0 & 0 & 0\\
\hline
T-S & 35 & 62 & 0 & 360 & 940 & 2 & 0 & 0 & 1000 & 953& 23 & 0 & 0 & 998 & 957 \\
\hline
O-S& 965 & 0 & 1000 & 0 & 41 & 998 & 0 & 1000 & 0 & 47 & 977 & 0 & 1000 & 2 & 43\\

A-S& 3.69 & -- & 7.06 & -- & 1.05 & 6.86 & -- & 46.30 & -- & 1.04 & 3.92 & -- & 95.28 & 1 & 1.05\\

		\hline
				\hline
\multicolumn{16}{|c|}{$\alpha=0.4$, $c=0.2$} \\
\cline{1-16}
		 &\multicolumn{5}{c|}{$n=100$} &\multicolumn{5}{c|}{$n=500$} &\multicolumn{5}{c|}{$n=1000$} \\
		 \cline{2-16}
		& ${\hat\bbj_*^{A}}$& $\hat\bbj_*^{B}$&$\hat\bbj_*^{C}$&$\hat\bbj_*^{(0)}$&$\hat\bbj_*^{(5)}$&${\hat\bbj_*^{A}}$& $\hat\bbj_*^{B}$&$\hat\bbj_*^{C}$&$\hat\bbj_*^{(0)}$&$\hat\bbj_*^{(5)}$&${\hat\bbj_*^{A}}$& $\hat\bbj_*^{B}$&$\hat\bbj_*^{C}$&$\hat\bbj_*^{(0)}$&$\hat\bbj_*^{(5)}$\\
\hline
U-S & 0 & 42 & 0 & 828 & 41 & 0 & 129 & 0 & 0 & 0 & 0 & 729 & 0 & 0 & 0\\
\hline
T-S & 0 & 923 & 3 & 172 & 919 & 0 & 871 & 0 & 998 & 965 & 0 & 271 & 41 & 1000 & 954\\
\hline
O-S & 1000 & 35 & 997 & 0 & 40 & 1000 & 0 & 1000 & 2 & 35 & 1000 & 0 & 959 & 0 & 46\\
A-S & 16.50 & 1.09 & 6.67 & -- & 1.12 & 100.87 & -- & 8 & 1 & 1& 213.52 & -- & 3.28 & -- & 1.02 \\
\hline
	\end{tabular}
	\caption{ Selection times of the KOO methods with AIC, BIC, $C_p$ thresholds and bootstrap methods under Settings  (I)  and (i) based on 1,000 replications. Here U-S, T-S, O-S and A-S stand for number of times a selection method under-specified the true model,  number of times a selection method identified the true model exactly, number of times a selection method over-specified the true model,  and  the average number of spurious variables a selection method identified when it over-specified the model, respectively.
	}\label{KOO1}
\end{table}

 \begin{table}[htbp]
 \setlength{\tabcolsep}{2.8pt}
 \center
	\begin{tabular}{|c|ccccc|ccccc|ccccc|}
		\hline
\multicolumn{16}{|c|}{$\alpha=0.2$, $c=0.4$} \\
\cline{1-16}
		 &\multicolumn{5}{c|}{$n=100$} &\multicolumn{5}{c|}{$n=500$} &\multicolumn{5}{c|}{$n=1000$} \\
		 \cline{2-16}
		& ${\hat\bbj_*^{A}}$& $\hat\bbj_*^{B}$&$\hat\bbj_*^{C}$&$\hat\bbj_*^{(0)}$&$\hat\bbj_*^{(5)}$&${\hat\bbj_*^{A}}$& $\hat\bbj_*^{B}$&$\hat\bbj_*^{C}$&$\hat\bbj_*^{(0)}$&$\hat\bbj_*^{(5)}$&${\hat\bbj_*^{A}}$& $\hat\bbj_*^{B}$&$\hat\bbj_*^{C}$&$\hat\bbj_*^{(0)}$&$\hat\bbj_*^{(5)}$\\

\hline
U-S & 0 & 925 & 0 & 991 & 622 & 0 & 1000 & 0 & 2 & 0 & 0 & 1000 & 0 & 0 & 0 \\
\hline
T-S & 2 & 74 & 0 & 9 & 361 & 0 & 0 & 0 & 997 & 934 & 0 & 0 & 0 & 1000 &938\\
\hline
O-S & 998 & 1 & 1000 & 0 & 17 & 1000 & 0 & 1000 & 1 & 66 & 1000 & 0 & 1000 & 0 & 62\\

A-S & 4.74 & 1 & 7 & -- & 1.06 & 16.40 & -- & 45.88 & 1 & 1 & 18.74 & -- & 95.36 & -- & 1.03\\
\hline
\hline

\multicolumn{16}{|c|}{$\alpha=0.4$, $c=0.2$} \\
\cline{1-16}
		 &\multicolumn{5}{c|}{$n=100$} &\multicolumn{5}{c|}{$n=500$} &\multicolumn{5}{c|}{$n=1000$} \\
		 \cline{2-16}
		& ${\hat\bbj_*^{A}}$& $\hat\bbj_*^{B}$&$\hat\bbj_*^{C}$&$\hat\bbj_*^{(0)}$&$\hat\bbj_*^{(5)}$&${\hat\bbj_*^{A}}$& $\hat\bbj_*^{B}$&$\hat\bbj_*^{C}$&$\hat\bbj_*^{(0)}$&$\hat\bbj_*^{(5)}$&${\hat\bbj_*^{A}}$& $\hat\bbj_*^{B}$&$\hat\bbj_*^{C}$&$\hat\bbj_*^{(0)}$&$\hat\bbj_*^{(5)}$\\
\hline
U-S & 0 & 4 & 0 & 999 & 597 & 0 & 7 & 0 & 7 & 0 & 0 & 27 & 0 & 0 & 0\\
\hline
T-S & 0 & 348 & 0 & 1 & 386 & 0 & 993 & 0 & 993 & 961 & 0 & 973 & 0 & 1000 & 939\\
\hline
O-S & 1000 & 648 & 1000 & 0 & 17 & 1000 & 0 & 1000 & 0 & 39 & 1000 & 0 & 1000 & 0 & 61\\

A-S & 15.31 & 1.67 & 9.23 & -- & 1 & 94.64 & -- & 28.61 & -- & 1 & 198.92 & -- & 31.12 & -- & 1.03\\
  \hline

	\end{tabular}
	\caption{ Selection times of the KOO methods with AIC, BIC, $C_p$ thresholds and bootstrap methods under Settings (II) and (iv) based on 1,000 replications. Here U-S, T-S, O-S and A-S stand for number of times a selection method under-specified the true model,  number of times a selection method identified the true model exactly, number of times a selection method over-specified the true model,  and  the average number of spurious variables a selection method identified when it over-specified the model, respectively.
	}\label{KOO4}
\end{table}

Based on our simulation results, the following observations are made: (1) The proposed KBT  are the most robust among the compared methods, especially when the sample size $n$ is large. (2) If the sample size $n$ is small, we recommend choosing a bigger $\nu$ in order to avoid missing the true variables. After all, admitting a small number of spurious variables is a better tradeoff  than missing some true variables.  (3) Choosing a bigger $\nu$ may select more spurious variables, but unlike the KAIC and KCp , the number of spurious variables selected is still under control.  (4) The simulation results are very similar across different distributions of errors, which suggests these selection rules are rather robust against the distributions of errors.  (5) When $\max_i\|\bba_i\|_\infty\to0$, our proposed methods also work well even the finite fourth moment condition does not hold, suggesting  that our theorems continue to hold even under weaker conditions. Our guess is that finite second moment of the underlying error distributions is enough.  (6) The performances of  KAIC, KBIC and KCp are not acceptable under our settings: KAIC and KCp frequently over-specify the true models quite substantially,  and KBIC frequently under-specifies the true models. Under some special cases,  KBIC has good selection times,  however, KBT in general outperforms KBIC.

\section{Real data analysis}\label{realdata}
We apply the proposed methods to two real examples.
The first example is a chemometrics data taken from \cite{SkagerbergM92M} (we replaced the value 19203 with 1.9203 in the 37th observation).
The data are taken from a simulation of a low-density tubular polyethylene reactor studying the relationship between polymer properties and the process. The predictor variables consist of 20 temperatures measured at equal distances along the low-density polyethylene reactor section, together with the wall temperature of the reactor and the solvent feed rate. The responses are the output characteristics of the polymers, including two molecular weights, two branching frequencies and the contents of two groups. This data set has been studied by \cite{BreimanF97P} and \cite{SimilaT07I}.  Similar to \cite{BreimanF97P}, we log-transformed the response values because they are highly skewed to the right. In total,  there are $n=56$ observations with $k=22$ predictor variables and $p=6$ responses.

 We present the scatterplot of  $\{\cK_{j}\}$ in  descending order in Figure \ref{real_fig1}.  We also indicate the critical values of KAIC, KBIC and KCp, and $\hat{K}_0$, $\hat{K}_{0.05}$ estimated by Algorithm 1 with standard normal distribution and $N=1,000$. Since the dimension is relatively small, we recommend using a larger significance level $\nu$ to prevent under-specifying. It seems that the variables $\{22,3,	4\}$ are significant and variables $\{21,11\}$ are potentially significant too. KAIC and KCp, however,  select many more variables, which are likely to be spurious. 
\begin{figure}[htbp!]
\center

		\includegraphics[width=12cm,height=8cm]{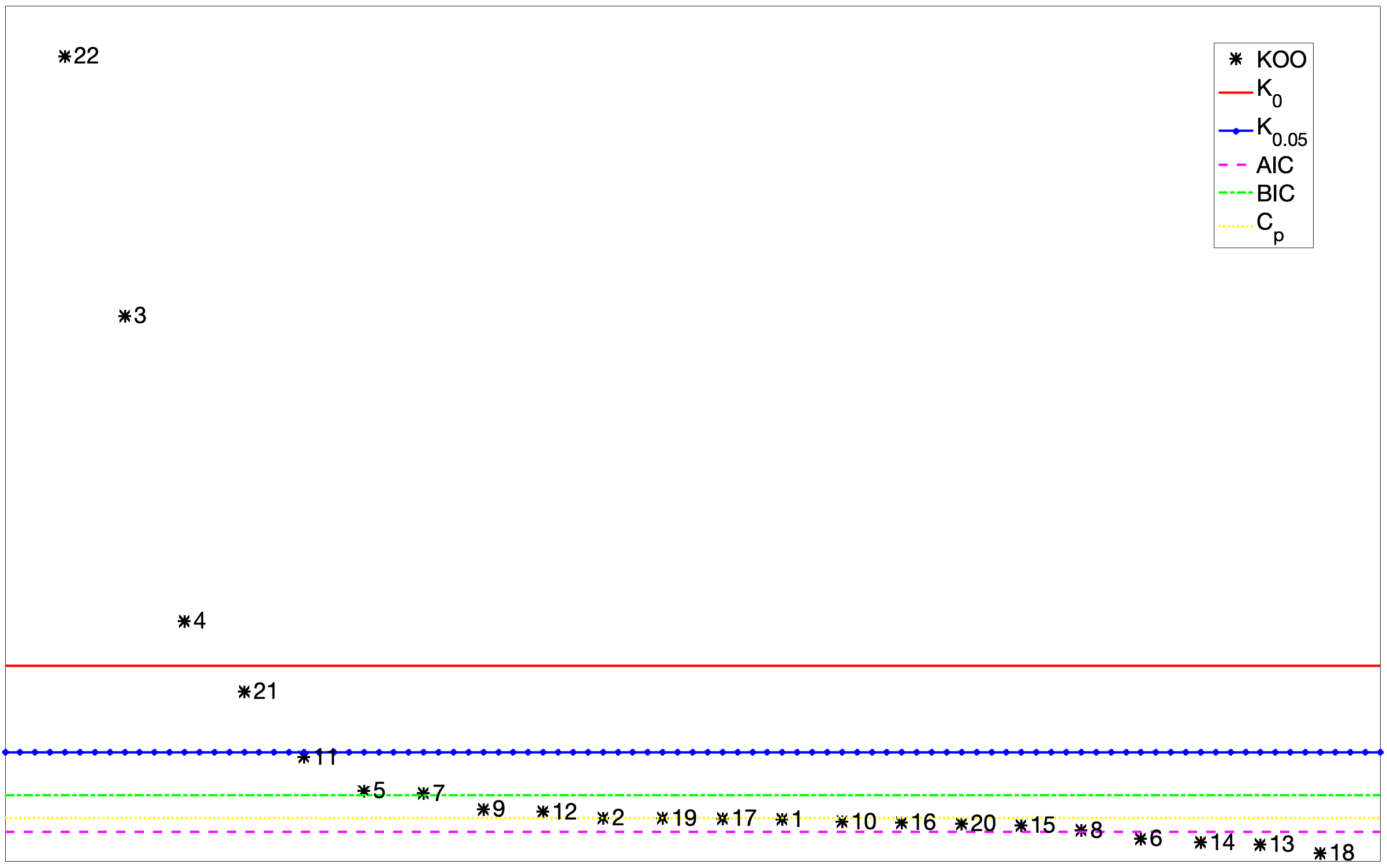}
		
		\caption{Scatterplots for the chemometrics dataset.}
		\label{real_fig1}
\end{figure}
The second example is a multivariate yeast cell-cycle dataset from \cite{SpellmanS98C}, which can be found in the R package ``spls".
This data set contains 542 cell-cycle-related genes (i.e., $n = 542$). Each gene contains 106 binding levels of transcription factors (i.e., $k = 106$) and 18 time points covering two cell cycles (i.e., $p = 18$). The binding levels of the transcription factors play a role in determining which genes are expressed and help delineate the process behind eukaryotic cell cycles. Further explanations of the dataset can be found in \citep{WangC07G, ChunK10S,ChenH12S,KongL17I}. Our results are presented in Figure \ref{real_fig2}. The transcription factors \{SWI5,	 STE12,	 ACE2,	 NDD1\},  corresponding to the four largest $\cK_j$-values, have been confirmed to be related to the cell cycle regulation by experiment  \cite{WangC07G}. And the other two transcription factors \{RME1,	HIR2\} could possibly be related to the cell cycle regulation. 
KBIC, however, will have missed identifying the TFs \{STE12,	 ACE2,	 NDD1\}  in the yeast cell-cycle. On the other hand,  KAIC and KCp will have identified more TFs, many of which may not be related to the yeast cell-cycle.

%
\begin{figure}[htbp!]
\center

		\includegraphics[width=12cm,height=8cm]{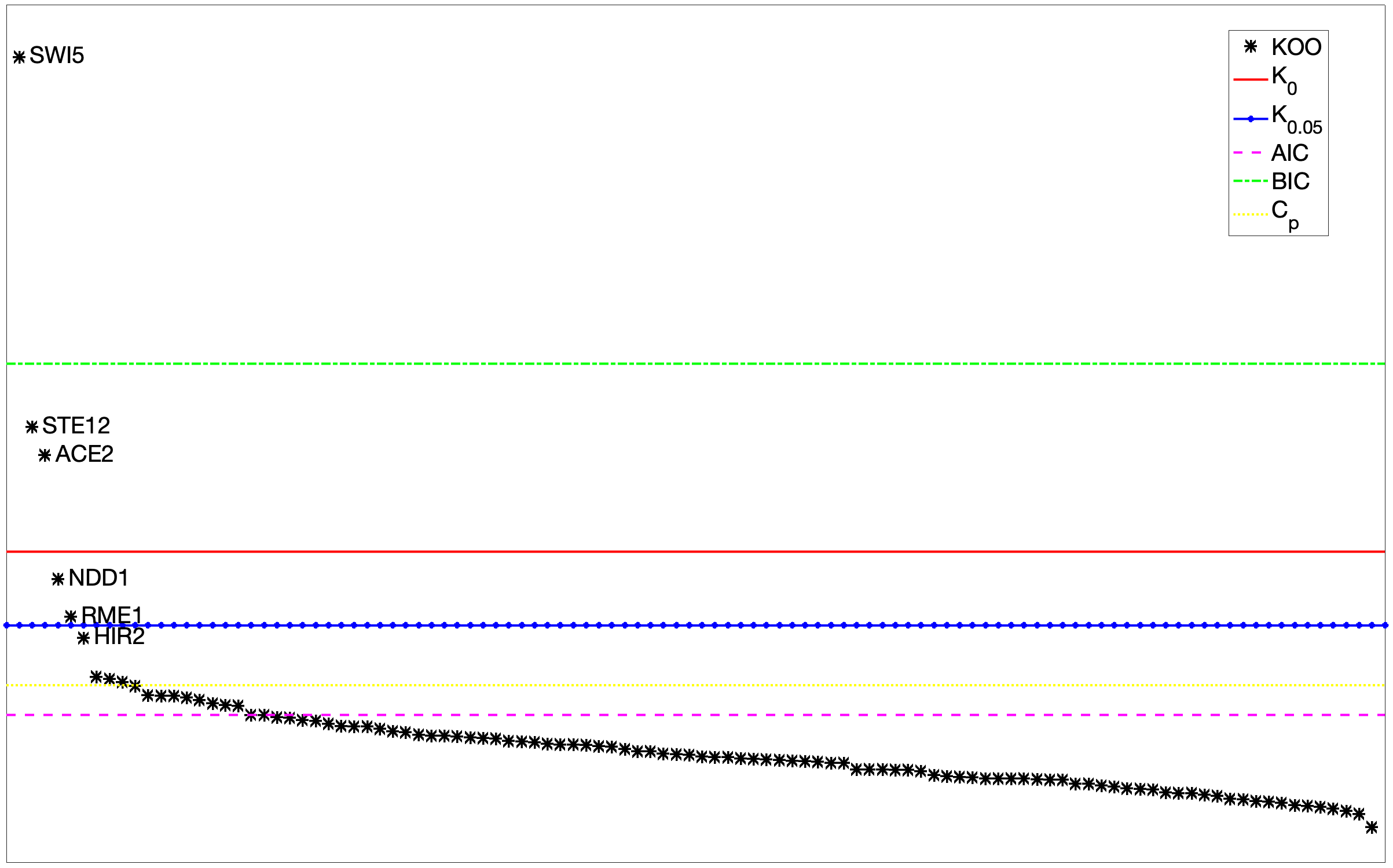}
		
		\caption{Scatterplots for the yeast cell-cycle dataset.}
		\label{real_fig2}
\end{figure}

\section{Proofs of Theorems \ref{limit1}, \ref{clt1} and \ref{clt2}  under normality}\label{proofnorm}
If the errors follow the standard normal distribution, the KOO statistic can be written as the quotient of two independent chi-squared random variables. As a result,  the proofs of Theorems \ref{limit1}, \ref{clt1} and \ref{clt2} are easier to present.  The proofs may also be of independent interest.  Hence,  we prove these results under normality in this section. The proofs of these theorems for general error distributions via random matrix theory are postponed in the Appendix for interested readers.
Note that there is no need to prove Theorem \ref{hattau} when the errors follow the standard normal distribution.  

Recall the KOO statistic
\begin{equation*}
\cK_j=\bbv_j' \bbW^{-1}\bbv_j,
\end{equation*}
where
\begin{equation*}
\bbv_j=\bgS^{-1/2}\bbY'\bba_j~~\mbox{and}~~
\bbW=\bbE'\bbQ\bbE.
\end{equation*}
When $\bbE$ has the standard normal distribution,  it follows  that
$$
\bbW \sim W_{p}(n-k, \bbI_p), ~~~~  \bbv_j \sim N_p(\bgS^{-1/2}\bm\Theta_*\bbX_*'\bba_{j}, \bbI_p), \quad j=1, \ldots, k,   $$ and
$ \bbW$ and $\{\bbv_1, \dots, \bbv_k \}$  are  independent.
Note that $\bbv_1, \dots, \bbv_k$ are not necessarily independent.   If $j \not\in \bbj_*$, $\bgS^{-1/2}\bm\Theta_*\bbX_*'\bba_{j}=\bf0$, on the other hand if $j \in \bbj_*$, $\bgS^{-1/2}\bm\Theta_*\bbX_*'\bba_{j} \not= \bf0$. Moreover, under the assumption of normality, $\tau=0$ in assumption (C3). Next, we state a preliminary lemma.
\begin{lemma} \label{lem3-1}
Let $\bbV=(\bbv_1, \ldots, \bbv_q)$ be a $p \times q$ random matrix with $q \le p$, and  let $\bbW$ be a $p \times p$ random matrix which is distributed as  Wishart distribution $W_p(m, \bbI_p)$. Assume that $\bbV$ and $\bbW$ are independent.  Let $\bbH$ be a $p \times p$ random orthogonal matrix such that the first $q$ columns are $\bbV(\bbV'\bbV)^{-1/2}$, that is, $\bbH=(\bbV(\bbV'\bbV)^{-1/2}, \cdot )$. Let
\begin{equation}
\bbZ=\bbH'\bbW\bbH=\left(
\begin{array}{cc}
\bbZ_{11} & \bbZ_{12}\\
\bbZ_{21} & \bbZ_{22}
\end{array}
\right),
\quad \bbZ_{11\cdot 2}=\bbZ_{11}-\bbZ_{12}\bbZ_{22}^{-1}\bbZ_{21},
\label{decomV}
\end{equation}
and $\bbZ_{21}$ is a $(p-q) \times q$ matrix. Then,  
\begin{align}
&\bbZ \sim W_p(m, \bbI_p),\label{leeq1}\\
&\bbV'\bbW^{-1}\bbV=(\bbV'\bbV)^{1/2}\bbZ_{11\cdot2}^{-1}
(\bbV'\bbV)^{1/2},\label{leeq2}\\
& \bbZ_{11\cdot2} \sim W_q(m-(p-q), \bbI_q).  \label{leeq3}
\end{align}
When  $q=1$,
$$
\bbv_1'\bbW^{-1}\bbv_1=\frac{\bbv_1'\bbv_1}{\bbZ_{11\cdot2}},
$$
where $\bbZ_{11\cdot2} \sim \chi^{2}(m-(p-1))$ is a chi-square variate with
$m-(p-1)$ degrees of freedom,
and $\bbv_1'\bbv_1$ and $\bbZ_{11\cdot2}$ are independent.
Further, if $\bbv_1 \sim N_p(\bm\mu, \bbI_p)$,
$$
\bbv_1'\bbW^{-1}\bbv_1=\frac{\bbv_1'\bbv_1}{\bbZ_{11\cdot2}}\sim\frac{\chi^{2}(p; \bm\mu'\bm\mu)}{\chi^{2}(m-(p-1))}.
$$
Here, $\chi^{2}(p; \delta^2)$ denotes a noncentral chi-square variate with
$p$ degrees of freedom and noncetrality parameter $\bm\mu'\bm\mu$, and
 $\chi^{2}(m-(p-1))$ denotes a chi-square variate with
$m-(p-1)$ degrees of freedom, and they are independent.
\end{lemma}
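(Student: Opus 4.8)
The plan is to prove the three displays \eqref{leeq1}, \eqref{leeq2}, \eqref{leeq3} in sequence, and then read off the $q=1$ specializations as immediate corollaries. For \eqref{leeq1}, the key fact is the orthogonal invariance of the Wishart distribution: if $\bbW \sim W_p(m,\bbI_p)$ and $\bbH$ is orthogonal and independent of $\bbW$, then $\bbH'\bbW\bbH \sim W_p(m,\bbI_p)$. The subtlety here is that $\bbH$ is \emph{not} independent of $\bbW$ in general — but it is a function of $\bbV$ alone, and $\bbV \perp \bbW$, so conditionally on $\bbV$ (hence on $\bbH$) we have $\bbH'\bbW\bbH \sim W_p(m,\bbI_p)$, a conditional law that does not depend on $\bbV$; therefore it is also the unconditional law and $\bbZ$ is independent of $\bbV$. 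I would spell this conditioning argument out carefully since it is exactly what makes the later independence claims work.

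For \eqref{leeq2}, I would write $\bbW^{-1} = \bbH \bbZ^{-1} \bbH'$ and use the block-inverse formula: the top-left $q\times q$ block of $\bbZ^{-1}$ is $\bbZ_{11\cdot2}^{-1}$. Since the first $q$ columns of $\bbH$ are $\bbV(\bbV'\bbV)^{-1/2}$, we get
\begin{align*}
\bbV'\bbW^{-1}\bbV &= \bbV'\bbH\bbZ^{-1}\bbH'\bbV = (\bbV'\bbV)^{1/2}\big(\bbI_q,\ \bbO\big)\bbZ^{-1}\big(\bbI_q,\ \bbO\big)'(\bbV'\bbV)^{1/2} \\
&= (\bbV'\bbV)^{1/2}\bbZ_{11\cdot2}^{-1}(\bbV'\bbV)^{1/2},
\end{align*}
where I used $\bbH'\bbV = \big((\bbV'\bbV)^{1/2},\ \bbO\big)'$, which holds because the remaining columns of $\bbH$ are orthogonal to the column space of $\bbV$. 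For \eqref{leeq3}, I would invoke the standard distributional result on Wishart Schur complements: if $\bbZ \sim W_p(m,\bbI_p)$ is partitioned into blocks of sizes $q$ and $p-q$, then $\bbZ_{11\cdot2} \sim W_q(m-(p-q),\bbI_q)$, and moreover $\bbZ_{11\cdot2}$ is independent of $(\bbZ_{12},\bbZ_{22})$. Here I would cite a standard multivariate analysis reference rather than reprove it.

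The $q=1$ case then follows: $\bbV=\bbv_1$ is a single column, $(\bbV'\bbV)^{1/2} = \|\bbv_1\|$, so $\bbv_1'\bbW^{-1}\bbv_1 = \bbv_1'\bbv_1 / \bbZ_{11\cdot2}$ with $\bbZ_{11\cdot2} \sim W_1(m-(p-1),1) = \chi^2(m-(p-1))$. For the independence of $\bbv_1'\bbv_1$ and $\bbZ_{11\cdot2}$: conditionally on $\bbv_1$, the quantity $\bbZ_{11\cdot2}$ has the fixed law $\chi^2(m-(p-1))$ not depending on $\bbv_1$, so it is independent of $\bbv_1$ and hence of $\bbv_1'\bbv_1$. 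Finally, if $\bbv_1 \sim N_p(\bm\mu,\bbI_p)$, then $\bbv_1'\bbv_1 \sim \chi^2(p;\bm\mu'\bm\mu)$ by definition of the noncentral chi-square, and since $\bbv_1 \perp \bbW$ implies (via the conditioning argument) $\bbv_1'\bbv_1 \perp \bbZ_{11\cdot2}$, the stated ratio representation follows. The main obstacle is purely one of bookkeeping: making the conditioning argument rigorous — that $\bbH$ depends only on $\bbV$, that the conditional law of $\bbZ$ given $\bbV$ is free of $\bbV$, and that this yields both the marginal Wishart law and all the independence assertions — and correctly tracking which blocks of $\bbZ^{-1}$ and of $\bbZ$ are involved in \eqref{leeq2} versus \eqref{leeq3}. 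No deep inequality or random matrix input is needed; it is all exact finite-sample distribution theory.
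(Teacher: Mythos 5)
Your proposal is correct and follows essentially the same route as the paper: orthogonal invariance of the Wishart law via conditioning on $\bbV$ (hence on $\bbH$) for \eqref{leeq1}, the block-inverse identity $\bbV'\bbW^{-1}\bbV=[(\bbV'\bbV)^{1/2},\,\bbO]\bbZ^{-1}[(\bbV'\bbV)^{1/2},\,\bbO]'$ for \eqref{leeq2}, and citation of the standard Wishart Schur-complement result for \eqref{leeq3}, with the $q=1$ case read off directly. Your explicit treatment of the conditioning/independence bookkeeping is if anything slightly more careful than the paper's.
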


\begin{proof}
	The result \eqref{leeq1} is straightforward by considering the conditional distribution of
$\bbZ$ given $\bbH$, and noting that the obtained result does not depend on $\bbH$.
Next we consider the result \eqref{leeq2}. Noting that $\bbH$ is orthogonal, we have
\begin{align*}
\bbV'\bbW^{-1}\bbV
&=\bbV'\bbH(\bbH'\bbW\bbH)^{-1}\bbH'\bbV \\
&=\left[(\bbV'\bbV)^{1/2}, \ \bbO \right]'\bbZ^{-1}
\left[(\bbV'\bbV)^{1/2}, \ \bbO \right],
\end{align*}
which implies \eqref{leeq2}. \eqref{leeq3} is a well known result on Wishart distribution (e.g., Theorem 2.2.3 in \citep{FujikoshiU10M}).

Next we consider the case of $q=1$. Note that the $(1,1)$ element of $\bbZ^{-1}$ is $\bbZ_{11\cdot 2}^{-1}$.
Then
\begin{align*}
\bbv_1'\bbW^{-1}\bbv_1=&\bbv_1\bbH'(\bbH'\bbW\bbH)^{-1}\bbH'\bbv_1\\
=& ((\bbv_1'\bbv_1)^{1/2},0, \ldots, 0) \bbZ^{-1}((\bbv_1'\bbv_1)^{1/2},0, \ldots, 0)' \\
=& \bbv_1'\bbv_1\bbZ_{11\cdot 2}^{-1}.
\end{align*}
The required result follows from the fact that $\bbv_1'\bbv_1 \sim \chi^{2}(p ;\delta^2)$ and $\bbZ_{11\cdot 2} \sim \chi^{2}(m-(p-1))$.  Then we complete the proof of this lemma.
\end{proof}

\subsection{Proof of Theorem \ref{limit1}}
By Lemma \ref{lem3-1},
we can express $\cK_j$ as a ratio of two independent chi-square variates as
$$
\cK_j= \left\{
 \begin{array}{lr}
 \frac{\chi^{2}(p)}{\chi^{2}(\widetilde{m})}& \mbox{~~if~~}j\notin\bbj_* \\
\frac{\chi^{2}(p; p\delta_j)}{\chi^{2}(\tilde{m})}& \mbox{~~if~~}j\in\bbj_*
 \end{array}
\right. ,
$$
where $\delta_j=p^{-1}\bbx_{j}'\bbQ_j\bbx_{j}\theta_j'\bgS^{-1}\theta_j $ and $\tilde{m}=n-k-p+1$. 
For  $j \notin \bbj_*$,  let
$$
Z_1=\frac{\chi^2(p)-p}{\sqrt{2p}}~~\mbox{and}~~
Z_2=\frac{\chi^2(\widetilde{m})-\widetilde{m}}{\sqrt{2\widetilde{m}}}.
$$
Then, it is clear that
\begin{align*}
\cK_j&=\frac{p+\sqrt{2p}Z_1}{\widetilde{m}+\sqrt{\widetilde{m}}Z_2}
=\frac{p/n+\sqrt{2p}Z_1/n}{\widetilde{m}/n+\sqrt{\widetilde{m}}Z_2/n}\\
&=\frac{c_n}{1-c_n-\alpha_n} + o_{a.s.}(1).
\end{align*}

For  $j \in \bbj_*$,  let
$$
\widetilde{Z}_1=\frac{\chi^2(p; p\delta_j)-p(1+\delta_j)}
{\sqrt{2p(1+2\delta_j)}}.
$$
Note that $\widetilde{Z}_1\dto N(0,1)$ as $p\to\infty$ or $p\delta_j^2\to\infty$. Thus we can find
 that
\begin{align*}
\cK_j&=\left\{
p(1+\delta_j)+\sqrt{2p(1+2\delta_j)}\widetilde{Z}_1
\right\}
\left\{
\widetilde{m}+\sqrt{2\widetilde{m}}Z_2
\right\}^{-1} \\
&=\frac{p}{\widetilde{m}}
\left\{
1+\delta_j+\sqrt{2(1+2\delta_j)p^{-1}}\widetilde{Z}_1
\right\}
\left\{
1-\sqrt{2\widetilde{m}^{-1}}Z_2
\right\}^{-1},
\end{align*}
which implies
$$
\cK_j - \frac{c_n(1+\delta_{j})}{1-c_n-\alpha_n}=o_{a.s.}(1+\delta_{j}).
$$
Then we complete the proof of Theorem \ref{limit1}.

\subsection{Proof of Theorem \ref{clt1}}
 For simplicity, we  consider the case $q=2$, and
assume that $\{1,2\}\subset [k]\backslash\bbj_*$.
To prove Theorem \ref{clt1}, it is sufficient to show that for any non-null vector $\bbh=(h_1,h_2)'$, $\sqrt{p}[h_1\cK_{1}+h_2\cK_{q}-\frac{c_n}{1-c_n-\alpha_n}(h_1+h_2)]
$ converges weakly to a normal distribution with mean zero and variance $\frac{2c^2(1-\alpha)}{(1-\alpha-c)^3}\bbh'(\cA_2'\cA_2)^2\bbh.$

Under the normality assumption, we can express $\cK_1$ and $\cK_2$ as follows:
\begin{equation}
\cK_1=\bbv_1'\bbW^{-1}\bbv_1, \quad \cK_2=\bbv_2'\bbW^{-1}\bbv_2.
\label{k1k2}
\end{equation}
Here, $\bbv_i \sim N_p({\bf 0}, \bbI_p), i=1, 2$, $\bbW \sim W_p(m, \bbI_p)$,
$m=n-k$,
$\{\bbv_1, \bbv_2\}$ and $\bbW$ are independent, but
 $\bbv_1$ and $\bbv_2$ are not independent. Let
$\cK_0=\bbv_1'\bbW^{-1}\bbv_2$ and
$$
{\bm \cK}=\begin{pmatrix}
\cK_1 & \cK_0 \\
\cK_0' & \cK_2
\end{pmatrix}.
$$
Note that
$$
h_1\cK_1+h_2\cK_2=\tr D_h{\bm \cK},
$$
where $D_h=\begin{pmatrix}
h_1 & 0 \\
0&h_2
\end{pmatrix}$.

Let $\bbV=(\bbv_1, \bbv_2).$
Using Lemma \ref{lem3-1},
we can write ${\bm \cK}$ as
\begin{align}
{\bm \cK} &=\bbV'\bbW^{-1}\bbV
\nonumber \\
&=\left(\frac{1}{p}\bbV'\bbV\right)^{1/2}
\left(\frac{1}{\widetilde{m}}\bbZ_{11\cdot 2}\right)^{-1}
\left(\frac{1}{p}\bbV'\bbV\right)^{1/2}\frac{p}{\widetilde{m}},
\label{kappa}
\end{align}
where $\bbZ_{11\cdot 2} \sim W_2(\tilde{m}, \bbI_p)$ is defined in \eqref{decomV} and $\widetilde{m}=m-(p-2)$.
Note that
$$
\bbV'\bbV \sim W_2(p, {\bm \Lambda}), \quad {\bm \Lambda}=\begin{pmatrix}
1 & \lambda \\
\lambda & 1
\end{pmatrix},
$$
where $\lambda=\bba_1'\bba_2.$
Let
\begin{align}
\bbF&=
\begin{pmatrix}
f_1 & f_3 \\
f_3 & f_2
\end{pmatrix}
=\sqrt{p}\left(\frac{1}{p}\bbV'\bbV - {\bm \Lambda}\right),
\label{f10}
\\
\bbG&=
\begin{pmatrix}
g_1 & g_3 \\
g_3 & g_2
\end{pmatrix}
=\sqrt{\widetilde{m}}\left(\frac{1}{\widetilde{m}}
\bbZ_{11\cdot 2}- \bbI_2\right).
\label{g10}
\end{align}
It follows from the asymptotic distribution of a Wishart matrix (e.g., Theorem 2.5.1 in \citep{FujikoshiU10M}) that
the limiting distribution of $(f_1,f_2,f_3)'$ (respectively,  $(g_1,g_2,g_3)'$) is a $3$-variate normal distribution with mean zero and covariance matrix
$$\begin{pmatrix}
2 & 2\lambda^2&2\lambda \\
2\lambda^2 &2&2\lambda\\
2\lambda&2\lambda&1+\lambda^2
\end{pmatrix}, ~\left(\mbox{respectively, } \begin{pmatrix}
2 & 0&0\\
0&2&0\\
0&0&1
\end{pmatrix}\right).
$$
Consequently, it is straightforward to show that
\begin{align}\label{DhF}
	  \tr D_h\bbF\dto N_2(0,2\tr (D_h\Lambda)^2)
\end{align}
and
\begin{align}\label{DhG}
	  \tr {\bm \Lambda}^{1/2}D_h {\bm \Lambda}^{1/2}\bbG\dto N_2(0,2 \tr ({D_h \bm \Lambda})^2).
\end{align}
Then, by substituting
$$
\frac{1}{p}\bbV'\bbV=\Lambda+ \frac{1}{\sqrt{p}}\bbF
$$
and
$$
\left(\frac{1}{\widetilde{m}} \bbZ_{11\cdot2} \right)^{-1}
=\left(\bbI_2+\frac{1}{\sqrt{\widetilde{m}}}\bbG\right)^{-1}
=\bbI_2-\frac{1}{\sqrt{\widetilde{m}}}\bbG+\frac{1}{{\widetilde{m}}}\left(\bbI_2+\frac{1}{\sqrt{\widetilde{m}}}\bbG\right)^{-1}\bbG^2
$$
into (\ref{kappa}),
we can expand ${\bm \cK}$ as
\begin{equation}
{\bm \cK}=\left\{\Lambda + \frac{1}{\sqrt{p}}\bbF -
\frac{1}{\sqrt{\widetilde{m}}}
(\Lambda+ \frac{1}{\sqrt{p}}\bbF)^{1/2}\bbG(\Lambda+ \frac{1}{\sqrt{p}}\bbF)^{1/2}
+ \O\right\}\frac{c_n}{1-c_n-\alpha_n},
\label{exp1}
\end{equation}
where $\O$ denotes the terms of order $O_p(n^{-1})$.
Using (\ref{exp1}), we have
\begin{align}
&\sqrt{p}\left\{
h_1\cK_1+h_2\cK_2 - \frac{c_n}{1-c_n-\alpha_n}(h_1+h_2)\right\}
\nonumber \\
&=\frac{c_n}{1-c_n-\alpha_n}
\left\{
\tr D_h\bbF -
\left(\frac{c_n}{1-c_n-\alpha_n}\right)^{1/2}
\tr {\bm \Lambda}^{1/2}D_h {\bm \Lambda}^{1/2}\bbG
\right\}+O_p(n^{-1/2}).
\label{exp2}
\end{align}
By \eqref{DhF} and \eqref{DhG}, we can see  that the limiting distribution of \eqref{exp2} is  normal  with mean zero and variance
$$
\frac{2c^2}{(1-c-\alpha)^2}\left(1+\frac{c}{1-c-\alpha}\right)\tr ({D_h \bm \Lambda})^2=\frac{2c^2(1-\alpha)}{(1-\alpha-c)^3}\bbh'(\cA_2'\cA_2)^2\bbh.
$$
This  completes the proof of Theorem \ref{clt1} .

\subsection{Proof of Theorem \ref{clt2}}
In the proof of Theorem \ref{limit1},  recall that for  $j \in \bbj_*$,
 $\cK_j$  can be expressed as a ratio of two independent chi-square variates:
$$
\cK_j=\frac{\chi^{2}(p; p\delta_j^2)}{\chi^{2}(\tilde{m})},
$$
where  $\chi^{2}(p; p\delta_j^2)$ denotes a noncentral chi-square variate with
$p$ degrees of freedom and noncentrality parameter $p\delta_j^2$, and
 $\chi^{2}(\tilde{m})$ denotes a chi-square variate with
$\tilde{m}=n-k-p+1$ degrees of freedom, and they are independent.
Let
$$
\widetilde{Z}_1=\frac{\chi^2(p;p\delta_j^2)-p-p\delta_j^2}
{\sqrt{2(p+2p\delta_j^2)}}, \quad
\widetilde{Z}_2=\frac{\chi^2(\widetilde{m})-\widetilde{m}}{\sqrt{2\widetilde{m}}}.
$$
Then, it is checked that $\widetilde{Z}_1$ and $\widetilde{Z}_2$ converge to the standard normal distribution. Note that
\begin{align*}
\cK_j&=\left\{
(p+p\delta_j^2)+\sqrt{2(p+2p\delta_j^2)}\widetilde{Z}_1
\right\}
\left\{
\widetilde{m}+\sqrt{2\widetilde{m}}\widetilde{Z}_2
\right\}^{-1} \\
&=\frac{p}{\widetilde{m}}
\left\{
1+\delta_j^2+\sqrt{2p^{-1}(1+2\delta_j^2)}\widetilde{Z}_1
\right\}
\left\{
1+\sqrt{2\widetilde{m}^{-1}}\widetilde{Z}_2\right\}^{-1}.
\end{align*}
This implies that
\begin{align*}
&\sqrt{p}\left\{\cK_j - \frac{p}{\widetilde{m}}(1+\delta_j^2)\right\} \\
& \quad \quad
=\frac{p}{\widetilde{m}}\left\{
\sqrt{2(1+2\delta_j^2)}\widetilde{Z}_1-(1+\delta_j^2)\left(\frac{2p}{\widetilde{m}}\right)^{1/2}
\widetilde{Z}_2\right\} + O_p({n^{-1/2}}).
\end{align*}
Theorem \ref{clt2}  follows from noting that $\widetilde{Z}_1$ and $\widetilde{Z}_2$ independently converge to the standard normal distribution.


\section*{Acknowledgement}
The authors are grateful to Professor Xuming He for his helpful suggestions and help in polishing the English.
Bai's research was supported by NSFC No. 12171198 and STDFJ No. 20210101147JC. Choi's research was supported by the Singapore MOE Academic Research Funds R-155-000-222-114. Hu's research was supported by NSFC Nos. 12171078, 11971097, 12292980, 12292982.
 
\appendix

\section{Simulation results}\label{appendixA}
The simulation results for Settings I and II, and six cases of distribution of  $\bbE$ are tabulated in Tables \ref{KOO1-1}--\ref{KOO6}.

 \begin{table}[htbp]
 \center
	\begin{tabular}{|c|ccccc|ccccc|}
		\hline
\multicolumn{11}{|c|}{$\alpha=0.2$, $c=0.2$} \\
\cline{1-11}
		 &\multicolumn{5}{c|}{$n=100$} &\multicolumn{5}{c|}{$n=500$} \\ \cline{2-11}
		& ${\hat\bbj_*^{A}}$& $\hat\bbj_*^{B}$&$\hat\bbj_*^{C}$&$\hat\bbj_*^{(0)}$&$\hat\bbj_*^{(5)}$&${\hat\bbj_*^{A}}$& $\hat\bbj_*^{B}$&$\hat\bbj_*^{C}$&$\hat\bbj_*^{(0)}$&$\hat\bbj_*^{(5)}$\\

\hline
U-S & 0 & 79 & 0 & 15 & 0 & 0 & 589 & 0 & 0 & 0 \\
\hline
T-S & 198 & 921 & 228 & 983 & 966 & 570 & 411 & 655 & 999 & 954 \\
\hline
O-S & 802 & 0 & 772 & 2 & 34 & 430 & 0 & 345 & 1 & 46 \\

A-S & \pmb{2.05} & -- & 1.97 & 1 & 1 & 1.37 & -- & 1.27 & 1 & 1.04 \\
\hline
\cline{1-11}
		 &\multicolumn{5}{c|}{$n=1000$} &\multicolumn{5}{c|}{$n=2000$} \\ \cline{2-11}
		& ${\hat\bbj_*^{A}}$& $\hat\bbj_*^{B}$&$\hat\bbj_*^{C}$&$\hat\bbj_*^{(0)}$&$\hat\bbj_*^{(5)}$&${\hat\bbj_*^{A}}$& $\hat\bbj_*^{B}$&$\hat\bbj_*^{C}$&$\hat\bbj_*^{(0)}$&$\hat\bbj_*^{(5)}$\\
\hline
U-S & 0 & 993 & 0 & 0 & 0 & 0 & 1000 & 0 & 0 & 0 \\
\hline
T-S & 956 & 7 & 972 & 1000 & 958 & 1000 & 0 & 1000 & 999 & 951 \\
\hline
O-S & 44 & 0 & 28 & 0 & 42 & 0 & 0 & 0 & 1 & 49 \\

A-S & 1.02 & -- & 1 & -- & 1.02 & -- & -- & -- & 1 & 1.04 \\
\hline
		\hline
\multicolumn{11}{|c|}{$\alpha=0.2$, $c=0.4$} \\
\cline{1-11}
		 &\multicolumn{5}{c|}{$n=100$} &\multicolumn{5}{c|}{$n=500$} \\ \cline{2-11}
		& ${\hat\bbj_*^{A}}$& $\hat\bbj_*^{B}$&$\hat\bbj_*^{C}$&$\hat\bbj_*^{(0)}$&$\hat\bbj_*^{(5)}$&${\hat\bbj_*^{A}}$& $\hat\bbj_*^{B}$&$\hat\bbj_*^{C}$&$\hat\bbj_*^{(0)}$&$\hat\bbj_*^{(5)}$\\
\hline
U-S & 0 & 938 & 0 & 640 & 19 & 0 & 1000 & 0 & 0 & 0 \\
\hline
T-S & 35 & 62 & 0 & 360 & 940 & 2 & 0 & 0 & 1000 & 953 \\
\hline
O-S & 965 & 0 & 1000 & 0 & 41 & 998 & 0 & 1000 & 0 & 47 \\

A-S & 3.69 & -- & 7.06 & -- & 1.05 & 6.86 & -- & 46.30 & -- & 1.04 \\
\hline
\cline{1-11}
		 &\multicolumn{5}{c|}{$n=1000$} &\multicolumn{5}{c|}{$n=2000$} \\ \cline{2-11}
		& ${\hat\bbj_*^{A}}$& $\hat\bbj_*^{B}$&$\hat\bbj_*^{C}$&$\hat\bbj_*^{(0)}$&$\hat\bbj_*^{(5)}$&${\hat\bbj_*^{A}}$& $\hat\bbj_*^{B}$&$\hat\bbj_*^{C}$&$\hat\bbj_*^{(0)}$&$\hat\bbj_*^{(5)}$\\
\hline

U-S & 0 & 1000 & 0 & 0 & 0 & 0 & 1000 & 0 & 0 & 0 \\
\hline
T-S & 23 & 0 & 0 & 998 & 957 & 505 & 0 & 0 & 1000 & 961 \\
\hline
O-S & 977 & 0 & 1000 & 2 & 43 & 495 & 0 & 1000 & 0 & 39 \\

A-S & 3.92 & -- & 95.28 & 1 & 1.05 & 1.47 & -- & 194.82 & -- & 1 \\
\hline
		\hline
\multicolumn{11}{|c|}{$\alpha=0.4$, $c=0.2$} \\
\cline{1-11}
		 &\multicolumn{5}{c|}{$n=100$} &\multicolumn{5}{c|}{$n=500$} \\ \cline{2-11}
		& ${\hat\bbj_*^{A}}$& $\hat\bbj_*^{B}$&$\hat\bbj_*^{C}$&$\hat\bbj_*^{(0)}$&$\hat\bbj_*^{(5)}$&${\hat\bbj_*^{A}}$& $\hat\bbj_*^{B}$&$\hat\bbj_*^{C}$&$\hat\bbj_*^{(0)}$&$\hat\bbj_*^{(5)}$\\ \hline
U-S & 0 & 42 & 0 & 828 & 41 & 0 & 129 & 0 & 0 & 0 \\
\hline
T-S & 0 & 923 & 3 & 172 & 919 & 0 & 871 & 0 & 998 & 965 \\
\hline
O-S & 1000 & 35 & 997 & 0 & 40 & 1000 & 0 & 1000 & 2 & 35 \\

A-S & 16.50 & 1.09 & 6.67 & -- & 1.12 & 100.87 & -- & 8 & 1 & 1 \\
\hline
\cline{1-11}
		 &\multicolumn{5}{c|}{$n=1000$} &\multicolumn{5}{c|}{$n=2000$} \\ \cline{2-11}
		& ${\hat\bbj_*^{A}}$& $\hat\bbj_*^{B}$&$\hat\bbj_*^{C}$&$\hat\bbj_*^{(0)}$&$\hat\bbj_*^{(5)}$&${\hat\bbj_*^{A}}$& $\hat\bbj_*^{B}$&$\hat\bbj_*^{C}$&$\hat\bbj_*^{(0)}$&$\hat\bbj_*^{(5)}$\\
\hline
U-S & 0 & 729 & 0 & 0 & 0 & 0 & 999 & 0 & 0 & 0 \\
\hline
T-S & 0 & 271 & 41 & 1000 & 954 & 0 & 1 & 748 & 995 & 940 \\
\hline
O-S & 1000 & 0 & 959 & 0 & 46 & 1000 & 0 & 252 & 5 & 60 \\

A-S & 213.52 & -- & 3.28 & -- & 1.02 & 450.55 & -- & 1.16 & 1 & 1 \\
\hline
		\hline
\multicolumn{11}{|c|}{$\alpha=0.4$, $c=0.4$} \\
\cline{1-11}
		 &\multicolumn{5}{c|}{$n=100$} &\multicolumn{5}{c|}{$n=500$} \\ \cline{2-11}
		& ${\hat\bbj_*^{A}}$& $\hat\bbj_*^{B}$&$\hat\bbj_*^{C}$&$\hat\bbj_*^{(0)}$&$\hat\bbj_*^{(5)}$&${\hat\bbj_*^{A}}$& $\hat\bbj_*^{B}$&$\hat\bbj_*^{C}$&$\hat\bbj_*^{(0)}$&$\hat\bbj_*^{(5)}$\\ \hline
U-S & 0 & 623 & 0 & 999 & 889 & 0 & 1000 & 0 & 10 & 0 \\
\hline
T-S & 0 & 294 & 0 & 1 & 103 & 0 & 0 & 0 & 990 & 963 \\
\hline
O-S & 1000 & 83 & 1000 & 0 & 8 & 1000 & 0 & 1000 & 0 & 37 \\

A-S & 31.05 & 1.51 & 29.35 & -- & 1.25 & 194.59 & -- & 193.17 & -- & 1.05 \\
\hline
\cline{1-11}
		 &\multicolumn{5}{c|}{$n=1000$} &\multicolumn{5}{c|}{$n=2000$} \\ \cline{2-11}
		& ${\hat\bbj_*^{A}}$& $\hat\bbj_*^{B}$&$\hat\bbj_*^{C}$&$\hat\bbj_*^{(0)}$&$\hat\bbj_*^{(5)}$&${\hat\bbj_*^{A}}$& $\hat\bbj_*^{B}$&$\hat\bbj_*^{C}$&$\hat\bbj_*^{(0)}$&$\hat\bbj_*^{(5)}$\\
\hline
U-S & 0 & 1000 & 0 & 0 & 0 & 0 & 1000 & 0 & 0 & 0 \\
\hline
T-S & 0 & 0 & 0 & 998 & 952 & 0 & 0 & 0 & 996 & 930 \\
\hline
O-S & 1000 & 0 & 1000 & 2 & 48 & 1000 & 0 & 1000 & 4 & 70 \\

A-S & 394.99 & -- & 394.85 & 1 & 1.06 & 795 & -- & 795 & 1 & 1.01 \\
\hline

	\end{tabular}
	\caption{ Selection times of the KOO methods with AIC, BIC, $C_p$ thresholds and bootstrap methods under Settings  (I)  and (i) based on 1,000 replications. Here U-S , T-S, O-S and A-S stand for number of times a selection method under-specified the true model,  number of times a selection method identified the true model exactly, number of times a selection method over-specified the true model,  and  the average number of spurious variables a selection method identified when it over-specified the model, respectively.
	}\label{KOO1-1}
\end{table}

 \begin{table}[htbp]
 \center
	\begin{tabular}{|c|ccccc|ccccc|}
		\hline
\multicolumn{11}{|c|}{$\alpha=0.2$, $c=0.2$} \\
\cline{1-11}
		 &\multicolumn{5}{c|}{$n=100$} &\multicolumn{5}{c|}{$n=500$} \\ \cline{2-11}
		& ${\hat\bbj_*^{A}}$& $\hat\bbj_*^{B}$&$\hat\bbj_*^{C}$&$\hat\bbj_*^{(0)}$&$\hat\bbj_*^{(5)}$&${\hat\bbj_*^{A}}$& $\hat\bbj_*^{B}$&$\hat\bbj_*^{C}$&$\hat\bbj_*^{(0)}$&$\hat\bbj_*^{(5)}$\\

\hline
U-S & 1 & 109 & 1 & 45 & 10 & 0 & 307 & 0 & 0 & 0 \\
\hline
T-S & 198 & 891 & 219 & 955 & 958 & 563 & 693 & 646 & 1000 & 961 \\
\hline
O-S & 801 & 0 & 780 & 0 & 32 & 437 & 0 & 354 & 0 & 39 \\

A-S & 2.09 & -- & 1.96 & -- & 1.03 & 1.35 & -- & 1.25 & -- & 1.05 \\
\hline
\cline{1-11}
		 &\multicolumn{5}{c|}{$n=1000$} &\multicolumn{5}{c|}{$n=2000$} \\ \cline{2-11}
		& ${\hat\bbj_*^{A}}$& $\hat\bbj_*^{B}$&$\hat\bbj_*^{C}$&$\hat\bbj_*^{(0)}$&$\hat\bbj_*^{(5)}$&${\hat\bbj_*^{A}}$& $\hat\bbj_*^{B}$&$\hat\bbj_*^{C}$&$\hat\bbj_*^{(0)}$&$\hat\bbj_*^{(5)}$\\
\hline
U-S & 0 & 780 & 0 & 0 & 0 & 0 & 1000 & 0 & 0 & 0 \\
\hline
T-S & 963 & 220 & 978 & 999 & 946 & 1000 & 0 & 1000 & 999 & 953 \\
\hline
O-S & 37 & 0 & 22 & 1 & 54 & 0 & 0 & 0 & 1 & 47 \\

A-S & 1.03 & -- & 1 & 1 & 1.02 & -- & -- & -- & 1 & 1.04 \\
\hline
		\hline
\multicolumn{11}{|c|}{$\alpha=0.2$, $c=0.4$} \\
\cline{1-11}
		 &\multicolumn{5}{c|}{$n=100$} &\multicolumn{5}{c|}{$n=500$} \\ \cline{2-11}
		& ${\hat\bbj_*^{A}}$& $\hat\bbj_*^{B}$&$\hat\bbj_*^{C}$&$\hat\bbj_*^{(0)}$&$\hat\bbj_*^{(5)}$&${\hat\bbj_*^{A}}$& $\hat\bbj_*^{B}$&$\hat\bbj_*^{C}$&$\hat\bbj_*^{(0)}$&$\hat\bbj_*^{(5)}$\\
\hline
U-S & 2 & 734 & 0 & 487 & 41 & 0 & 1000 & 0 & 0 & 0 \\
\hline
T-S & 40 & 266 & 0 & 513 & 937 & 1 & 0 & 0 & 1000 & 945 \\
\hline
O-S & 958 & 0 & 1000 & 0 & 22 & 999 & 0 & 1000 & 0 & 55 \\

A-S & 3.63 & -- & 6.99 & -- & 1.05 & 6.59 & -- & 45.85 & -- & 1.02 \\
\hline
\cline{1-11}
		 &\multicolumn{5}{c|}{$n=1000$} &\multicolumn{5}{c|}{$n=2000$} \\ \cline{2-11}
		& ${\hat\bbj_*^{A}}$& $\hat\bbj_*^{B}$&$\hat\bbj_*^{C}$&$\hat\bbj_*^{(0)}$&$\hat\bbj_*^{(5)}$&${\hat\bbj_*^{A}}$& $\hat\bbj_*^{B}$&$\hat\bbj_*^{C}$&$\hat\bbj_*^{(0)}$&$\hat\bbj_*^{(5)}$\\
\hline

U-S & 0 & 1000 & 0 & 0 & 0 & 0 & 1000 & 0 & 0 & 0 \\
\hline
T-S & 26 & 0 & 0 & 1000 & 962 & 490 & 0 & 0 & 999 & 943 \\
\hline
O-S & 974 & 0 & 1000 & 0 & 38 & 510 & 0 & 1000 & 1 & 57 \\

A-S & 3.95 & -- & 95.28 & -- & 1 & 1.46 & -- & 194.80 & 1 & 1.02 \\
\hline
		\hline
\multicolumn{11}{|c|}{$\alpha=0.4$, $c=0.2$} \\
\cline{1-11}
		 &\multicolumn{5}{c|}{$n=100$} &\multicolumn{5}{c|}{$n=500$} \\ \cline{2-11}
		& ${\hat\bbj_*^{A}}$& $\hat\bbj_*^{B}$&$\hat\bbj_*^{C}$&$\hat\bbj_*^{(0)}$&$\hat\bbj_*^{(5)}$&${\hat\bbj_*^{A}}$& $\hat\bbj_*^{B}$&$\hat\bbj_*^{C}$&$\hat\bbj_*^{(0)}$&$\hat\bbj_*^{(5)}$\\ \hline
U-S & 0 & 56 & 2 & 245 & 53 & 0 & 113 & 0 & 0 & 0 \\
\hline
T-S & 0 & 917 & 2 & 755 & 913 & 0 & 887 & 0 & 999 & 958 \\
\hline
O-S & 1000 & 27 & 996 & 0 & 34 & 1000 & 0 & 1000 & 1 & 42 \\

A-S & 16.56 & 1.04 & 6.79 & -- & 1.03 & 102.01 & -- & 8.19 & 1 & 1 \\
\hline
\cline{1-11}
		 &\multicolumn{5}{c|}{$n=1000$} &\multicolumn{5}{c|}{$n=2000$} \\ \cline{2-11}
		& ${\hat\bbj_*^{A}}$& $\hat\bbj_*^{B}$&$\hat\bbj_*^{C}$&$\hat\bbj_*^{(0)}$&$\hat\bbj_*^{(5)}$&${\hat\bbj_*^{A}}$& $\hat\bbj_*^{B}$&$\hat\bbj_*^{C}$&$\hat\bbj_*^{(0)}$&$\hat\bbj_*^{(5)}$\\
\hline
U-S & 0 & 465 & 0 & 0 & 0 & 0 & 902 & 0 & 0 & 0 \\
\hline
T-S & 0 & 535 & 54 & 1000 & 940 & 0 & 98 & 780 & 999 & 959 \\
\hline
O-S & 1000 & 0 & 946 & 0 & 60 & 1000 & 0 & 220 & 1 & 41 \\

A-S & 213.34 & -- & 3.27 & -- & 1.02 & 449.91 & -- & 1.11 & 1 & 1.05 \\
\hline
		\hline
\multicolumn{11}{|c|}{$\alpha=0.4$, $c=0.4$} \\
\cline{1-11}
		 &\multicolumn{5}{c|}{$n=100$} &\multicolumn{5}{c|}{$n=500$} \\ \cline{2-11}
		& ${\hat\bbj_*^{A}}$& $\hat\bbj_*^{B}$&$\hat\bbj_*^{C}$&$\hat\bbj_*^{(0)}$&$\hat\bbj_*^{(5)}$&${\hat\bbj_*^{A}}$& $\hat\bbj_*^{B}$&$\hat\bbj_*^{C}$&$\hat\bbj_*^{(0)}$&$\hat\bbj_*^{(5)}$\\ \hline
U-S & 0 & 465 & 1 & 984 & 744 & 0 & 996 & 0 & 17 & 3 \\
\hline
T-S & 0 & 420 & 0 & 16 & 240 & 0 & 4 & 0 & 983 & 937 \\
\hline
O-S & 1000 & 115 & 999 & 0 & 16 & 1000 & 0 & 1000 & 0 & 60 \\

A-S & 30.98 & 1.39 & 29.18 & -- & 1.06 & 194.59 & -- & 193.12 & -- & 1 \\
\hline
\cline{1-11}
		 &\multicolumn{5}{c|}{$n=1000$} &\multicolumn{5}{c|}{$n=2000$} \\ \cline{2-11}
		& ${\hat\bbj_*^{A}}$& $\hat\bbj_*^{B}$&$\hat\bbj_*^{C}$&$\hat\bbj_*^{(0)}$&$\hat\bbj_*^{(5)}$&${\hat\bbj_*^{A}}$& $\hat\bbj_*^{B}$&$\hat\bbj_*^{C}$&$\hat\bbj_*^{(0)}$&$\hat\bbj_*^{(5)}$\\
\hline
U-S & 0 & 1000 & 0 & 3 & 1 & 0 & 1000 & 0 & 0 & 0 \\
\hline
T-S & 0 & 0 & 0 & 996 & 949 & 0 & 0 & 0 & 999 & 950 \\
\hline
O-S & 1000 & 0 & 1000 & 1 & 50 & 1000 & 0 & 1000 & 1 & 50 \\

A-S & 394.99 & -- & 394.86 & 1 & 1 & 795 & -- & 795 & 1 & 1 \\
\hline

	\end{tabular}
	\caption{ Selection times of the KOO methods with AIC, BIC, $C_p$ thresholds and bootstrap methods under Settings  (I)  and (ii) based on 1,000 replications. Here U-S , T-S, O-S and A-S stand for number of times a selection method under-specified the true model,  number of times a selection method identified the true model exactly, number of times a selection method over-specified the true model,  and  the average number of spurious variables a selection method identified when it over-specified the model, respectively.
	}\label{KOO2}
\end{table}

 \begin{table}[htbp] \center
	\begin{tabular}{|c|ccccc|ccccc|}
		\hline
\multicolumn{11}{|c|}{$\alpha=0.2$, $c=0.2$} \\
\cline{1-11}
		 &\multicolumn{5}{c|}{$n=100$} &\multicolumn{5}{c|}{$n=500$} \\ \cline{2-11}
		& ${\hat\bbj_*^{A}}$& $\hat\bbj_*^{B}$&$\hat\bbj_*^{C}$&$\hat\bbj_*^{(0)}$&$\hat\bbj_*^{(5)}$&${\hat\bbj_*^{A}}$& $\hat\bbj_*^{B}$&$\hat\bbj_*^{C}$&$\hat\bbj_*^{(0)}$&$\hat\bbj_*^{(5)}$\\

\hline
U-S & 0 & 120 & 0 & 25 & 0 & 0 & 543 & 0 & 0 & 0 \\
\hline
T-S & 182 & 880 & 204 & 974 & 969 & 572 & 457 & 663 & 999 & 955 \\
\hline
O-S & 818 & 0 & 796 & 1 & 31 & 428 & 0 & 337 & 1 & 45 \\

A-S & 2.10 & -- & 2.02 & 1 & 1.10 & 1.35 & -- & 1.23 & 1 & 1.04 \\
\hline

\cline{1-11}
		 &\multicolumn{5}{c|}{$n=1000$} &\multicolumn{5}{c|}{$n=2000$} \\ \cline{2-11}
		& ${\hat\bbj_*^{A}}$& $\hat\bbj_*^{B}$&$\hat\bbj_*^{C}$&$\hat\bbj_*^{(0)}$&$\hat\bbj_*^{(5)}$&${\hat\bbj_*^{A}}$& $\hat\bbj_*^{B}$&$\hat\bbj_*^{C}$&$\hat\bbj_*^{(0)}$&$\hat\bbj_*^{(5)}$\\
\hline
U-S & 0 & 975 & 0 & 0 & 0 & 0 & 1000 & 0 & 0 & 0 \\
\hline
T-S & 963 & 25 & 981 & 1000 & 963 & 999 & 0 & 1000 & 999 & 953 \\
\hline
O-S & 37 & 0 & 19 & 0 & 37 & 1 & 0 & 0 & 1 & 47 \\

A-S & 1 & -- & 1 & -- & 1 & 1 & -- & -- & 1 & 1.02 \\
\hline
\hline
\multicolumn{11}{|c|}{$\alpha=0.2$, $c=0.4$} \\
\cline{1-11}
		 &\multicolumn{5}{c|}{$n=100$} &\multicolumn{5}{c|}{$n=500$} \\ \cline{2-11}
		& ${\hat\bbj_*^{A}}$& $\hat\bbj_*^{B}$&$\hat\bbj_*^{C}$&$\hat\bbj_*^{(0)}$&$\hat\bbj_*^{(5)}$&${\hat\bbj_*^{A}}$& $\hat\bbj_*^{B}$&$\hat\bbj_*^{C}$&$\hat\bbj_*^{(0)}$&$\hat\bbj_*^{(5)}$\\
		\hline
U-S & 0 & 895 & 0 & 219 & 24 & 0 & 1000 & 0 & 0 & 0 \\
\hline
T-S & 37 & 105 & 1 & 778 & 932 & 4 & 0 & 0 & 999 & 941 \\
\hline
O-S & 963 & 0 & 999 & 3 & 44 & 996 & 0 & 1000 & 1 & 59 \\

A-S & 3.68 & -- & 7.11 & 1 & 1.07 & 6.60 & -- & 46.14 & 1 & 1.02 \\
\hline
\cline{1-11}
		 &\multicolumn{5}{c|}{$n=1000$} &\multicolumn{5}{c|}{$n=2000$} \\ \cline{2-11}
		& ${\hat\bbj_*^{A}}$& $\hat\bbj_*^{B}$&$\hat\bbj_*^{C}$&$\hat\bbj_*^{(0)}$&$\hat\bbj_*^{(5)}$&${\hat\bbj_*^{A}}$& $\hat\bbj_*^{B}$&$\hat\bbj_*^{C}$&$\hat\bbj_*^{(0)}$&$\hat\bbj_*^{(5)}$\\
\hline
U-S & 0 & 1000 & 0 & 0 & 0 & 0 & 1000 & 0 & 0 & 0 \\
\hline
T-S & 35 & 0 & 0 & 1000 & 937 & 454 & 0 & 0 & 1000 & 941 \\
\hline
O-S & 965 & 0 & 1000 & 0 & 63 & 546 & 0 & 1000 & 0 & 59 \\

A-S & 3.90 & -- & 95.25 & -- & 1.05 & 1.46 & -- & 194.82 & -- & 1 \\  \hline
\hline
\multicolumn{11}{|c|}{$\alpha=0.4$, $c=0.2$} \\
\cline{1-11}
		 &\multicolumn{5}{c|}{$n=100$} &\multicolumn{5}{c|}{$n=500$} \\ \cline{2-11}
		& ${\hat\bbj_*^{A}}$& $\hat\bbj_*^{B}$&$\hat\bbj_*^{C}$&$\hat\bbj_*^{(0)}$&$\hat\bbj_*^{(5)}$&${\hat\bbj_*^{A}}$& $\hat\bbj_*^{B}$&$\hat\bbj_*^{C}$&$\hat\bbj_*^{(0)}$&$\hat\bbj_*^{(5)}$\\
		\hline
U-S & 0 & 61 & 0 & 331 & 43 & 0 & 153 & 0 & 0 & 0 \\
\hline
T-S & 0 & 894 & 5 & 666 & 897 & 0 & 847 & 0 & 999 & 969 \\
\hline
O-S & 1000 & 45 & 995 & 3 & 60 & 1000 & 0 & 1000 & 1 & 31 \\

A-S & 16.59 & 1.02 & 6.91 & 1 & 1.08 & 101.70 & -- & 8.23 & 1 & 1 \\

 \hline
\cline{1-11}
		 &\multicolumn{5}{c|}{$n=1000$} &\multicolumn{5}{c|}{$n=2000$} \\ \cline{2-11}
		& ${\hat\bbj_*^{A}}$& $\hat\bbj_*^{B}$&$\hat\bbj_*^{C}$&$\hat\bbj_*^{(0)}$&$\hat\bbj_*^{(5)}$&${\hat\bbj_*^{A}}$& $\hat\bbj_*^{B}$&$\hat\bbj_*^{C}$&$\hat\bbj_*^{(0)}$&$\hat\bbj_*^{(5)}$\\
\hline
U-S & 0 & 715 & 0 & 0 & 0 & 0 & 1000 & 0 & 0 & 0 \\
\hline
T-S & 0 & 285 & 44 & 1000 & 947 & 0 & 0 & 788 & 999 & 963 \\
\hline
O-S & 1000 & 0 & 956 & 0 & 53 & 1000 & 0 & 212 & 1 & 37 \\

A-S & 213.18 & -- & 3.22 & -- & 1.04 & 449.63 & -- & 1.13 & 1 & 1 \\
\hline
\hline
\multicolumn{11}{|c|}{$\alpha=0.4$, $c=0.4$} \\
\cline{1-11}
		 &\multicolumn{5}{c|}{$n=100$} &\multicolumn{5}{c|}{$n=500$} \\ \cline{2-11}
		& ${\hat\bbj_*^{A}}$& $\hat\bbj_*^{B}$&$\hat\bbj_*^{C}$&$\hat\bbj_*^{(0)}$&$\hat\bbj_*^{(5)}$&${\hat\bbj_*^{A}}$& $\hat\bbj_*^{B}$&$\hat\bbj_*^{C}$&$\hat\bbj_*^{(0)}$&$\hat\bbj_*^{(5)}$\\
		\hline
U-S & 0 & 589 & 0 & 994 & 837 & 0 & 1000 & 0 & 2 & 0 \\
\hline
T-S & 0 & 319 & 0 & 6 & 150 & 0 & 0 & 0 & 995 & 950 \\
\hline
O-S & 1000 & 92 & 1000 & 0 & 13 & 1000 & 0 & 1000 & 3 & 50 \\

A-S & 30.97 & 1.34 & 29.24 & -- & 1.38 & 194.60 & -- & 193.15 & 1 & 1.02 \\    \hline
\cline{1-11}
		 &\multicolumn{5}{c|}{$n=1000$} &\multicolumn{5}{c|}{$n=2000$} \\ \cline{2-11}
		& ${\hat\bbj_*^{A}}$& $\hat\bbj_*^{B}$&$\hat\bbj_*^{C}$&$\hat\bbj_*^{(0)}$&$\hat\bbj_*^{(5)}$&${\hat\bbj_*^{A}}$& $\hat\bbj_*^{B}$&$\hat\bbj_*^{C}$&$\hat\bbj_*^{(0)}$&$\hat\bbj_*^{(5)}$\\
\hline
U-S & 0 & 1000 & 0 & 0 & 0 & 0 & 1000 & 0 & 0 & 0 \\
\hline
T-S & 0 & 0 & 0 & 1000 & 950 & 0 & 0 & 0 & 994 & 945 \\
\hline
O-S & 1000 & 0 & 1000 & 0 & 50 & 1000 & 0 & 1000 & 6 & 55 \\

A-S & 394.99 & -- & 394.83 & -- & 1.06 & 795 & -- & 795 & 1 & 1.05 \\
 \hline

	\end{tabular}
	\caption{ Selection times of the KOO methods with AIC, BIC, $C_p$ thresholds and bootstrap methods under Settings (I) and (iii) based on 1,000 replications. Here U-S , T-S, O-S and A-S stand for number of times a selection method under-specified the true model,  number of times a selection method identified the true model exactly, number of times a selection method over-specified the true model,  and  the average number of spurious variables a selection method identified when it over-specified the model, respectively.
	}\label{KOO3}
\end{table}


 \begin{table}[htbp] \center
	\begin{tabular}{|c|ccccc|ccccc|}
		\hline
\multicolumn{11}{|c|}{$\alpha=0.2$, $c=0.2$} \\
\cline{1-11}
		 &\multicolumn{5}{c|}{$n=100$} &\multicolumn{5}{c|}{$n=500$} \\ \cline{2-11}
		& ${\hat\bbj_*^{A}}$& $\hat\bbj_*^{B}$&$\hat\bbj_*^{C}$&$\hat\bbj_*^{(0)}$&$\hat\bbj_*^{(5)}$&${\hat\bbj_*^{A}}$& $\hat\bbj_*^{B}$&$\hat\bbj_*^{C}$&$\hat\bbj_*^{(0)}$&$\hat\bbj_*^{(5)}$\\

\hline
U-S & 0 & 77 & 0 & 971 & 215 & 0 & 648 & 0 & 0 & 0 \\
\hline
T-S & 41 & 847 & 46 & 29 & 753 & 0 & 352 & 1 & 1000 & 967 \\
\hline
O-S & 959 & 76 & 954 & 0 & 32 & 1000 & 0 & 999 & 0 & 33 \\

A-S & 3.11 & 1.04 & 3.01 & -- & 1 & 7.40 & -- & 6.68 & -- & 1.03 \\
\hline

\cline{1-11}
		 &\multicolumn{5}{c|}{$n=1000$} &\multicolumn{5}{c|}{$n=2000$} \\ \cline{2-11}
		& ${\hat\bbj_*^{A}}$& $\hat\bbj_*^{B}$&$\hat\bbj_*^{C}$&$\hat\bbj_*^{(0)}$&$\hat\bbj_*^{(5)}$&${\hat\bbj_*^{A}}$& $\hat\bbj_*^{B}$&$\hat\bbj_*^{C}$&$\hat\bbj_*^{(0)}$&$\hat\bbj_*^{(5)}$\\
\hline
U-S & 0 & 993 & 0 & 0 & 0 & 0 & 1000 & 0 & 0 & 0 \\
\hline
T-S & 4 & 7 & 6 & 1000 & 944 & 177 & 0 & 267 & 997 & 942 \\
\hline
O-S & 996 & 0 & 994 & 0 & 56 & 823 & 0 & 733 & 3 & 58 \\

A-S & 5.51 & -- & 4.65 & -- & 1.04 & 2.11 & -- & 1.77 & 1 & 1.02 \\
\hline
\hline
\multicolumn{11}{|c|}{$\alpha=0.2$, $c=0.4$} \\
\cline{1-11}
		 &\multicolumn{5}{c|}{$n=100$} &\multicolumn{5}{c|}{$n=500$} \\ \cline{2-11}
		& ${\hat\bbj_*^{A}}$& $\hat\bbj_*^{B}$&$\hat\bbj_*^{C}$&$\hat\bbj_*^{(0)}$&$\hat\bbj_*^{(5)}$&${\hat\bbj_*^{A}}$& $\hat\bbj_*^{B}$&$\hat\bbj_*^{C}$&$\hat\bbj_*^{(0)}$&$\hat\bbj_*^{(5)}$\\
		\hline
U-S & 0 & 925 & 0 & 991 & 622 & 0 & 1000 & 0 & 2 & 0 \\
\hline
T-S & 2 & 74 & 0 & 9 & 361 & 0 & 0 & 0 & 997 & 934 \\
\hline
O-S & 998 & 1 & 1000 & 0 & 17 & 1000 & 0 & 1000 & 1 & 66 \\

A-S & 4.74 & 1 & 7 & -- & 1.06 & 16.40 & -- & 45.88 & 1 & 1 \\
\hline
\cline{1-11}
		 &\multicolumn{5}{c|}{$n=1000$} &\multicolumn{5}{c|}{$n=2000$} \\ \cline{2-11}
		& ${\hat\bbj_*^{A}}$& $\hat\bbj_*^{B}$&$\hat\bbj_*^{C}$&$\hat\bbj_*^{(0)}$&$\hat\bbj_*^{(5)}$&${\hat\bbj_*^{A}}$& $\hat\bbj_*^{B}$&$\hat\bbj_*^{C}$&$\hat\bbj_*^{(0)}$&$\hat\bbj_*^{(5)}$\\
\hline
U-S & 0 & 1000 & 0 & 0 & 0 & 0 & 1000 & 0 & 0 & 0 \\
\hline
T-S & 0 & 0 & 0 & 1000 & 938 & 0 & 0 & 0 & 999 & 924 \\
\hline
O-S & 1000 & 0 & 1000 & 0 & 62 & 1000 & 0 & 1000 & 1 & 76 \\

A-S & 18.74 & -- & 95.36 & -- & 1.03 & 13.77 & -- & 194.65 & 1 & 1.01 \\
 \hline
\hline
\multicolumn{11}{|c|}{$\alpha=0.4$, $c=0.2$} \\
\cline{1-11}
		 &\multicolumn{5}{c|}{$n=100$} &\multicolumn{5}{c|}{$n=500$} \\ \cline{2-11}
		& ${\hat\bbj_*^{A}}$& $\hat\bbj_*^{B}$&$\hat\bbj_*^{C}$&$\hat\bbj_*^{(0)}$&$\hat\bbj_*^{(5)}$&${\hat\bbj_*^{A}}$& $\hat\bbj_*^{B}$&$\hat\bbj_*^{C}$&$\hat\bbj_*^{(0)}$&$\hat\bbj_*^{(5)}$\\
		\hline
U-S & 0 & 4 & 0 & 999 & 597 & 0 & 7 & 0 & 7 & 0 \\
\hline
T-S & 0 & 348 & 0 & 1 & 386 & 0 & 993 & 0 & 993 & 961 \\
\hline
O-S & 1000 & 648 & 1000 & 0 & 17 & 1000 & 0 & 1000 & 0 & 39 \\

A-S & 15.31 & 1.67 & 9.23 & -- & 1 & 94.64 & -- & 28.61 & -- & 1 \\
  \hline
\cline{1-11}
		 &\multicolumn{5}{c|}{$n=1000$} &\multicolumn{5}{c|}{$n=2000$} \\ \cline{2-11}
		& ${\hat\bbj_*^{A}}$& $\hat\bbj_*^{B}$&$\hat\bbj_*^{C}$&$\hat\bbj_*^{(0)}$&$\hat\bbj_*^{(5)}$&${\hat\bbj_*^{A}}$& $\hat\bbj_*^{B}$&$\hat\bbj_*^{C}$&$\hat\bbj_*^{(0)}$&$\hat\bbj_*^{(5)}$\\
\hline
U-S & 0 & 27 & 0 & 0 & 0 & 0 & 354 & 0 & 0 & 0 \\
\hline
T-S & 0 & 973 & 0 & 1000 & 939 & 0 & 646 & 0 & 1000 & 956 \\
\hline
O-S & 1000 & 0 & 1000 & 0 & 61 & 1000 & 0 & 1000 & 0 & 44 \\

A-S & 198.92 & -- & 31.12 & -- & 1.03 & 417.94 & -- & 21.32 & -- & 1.02 \\
\hline
\hline
\multicolumn{11}{|c|}{$\alpha=0.4$, $c=0.4$} \\
\cline{1-11}
		 &\multicolumn{5}{c|}{$n=100$} &\multicolumn{5}{c|}{$n=500$} \\ \cline{2-11}
		& ${\hat\bbj_*^{A}}$& $\hat\bbj_*^{B}$&$\hat\bbj_*^{C}$&$\hat\bbj_*^{(0)}$&$\hat\bbj_*^{(5)}$&${\hat\bbj_*^{A}}$& $\hat\bbj_*^{B}$&$\hat\bbj_*^{C}$&$\hat\bbj_*^{(0)}$&$\hat\bbj_*^{(5)}$\\
		\hline
U-S & 0 & 243 & 0 & 1000 & 898 & 0 & 988 & 0 & 188 & 2 \\
\hline
T-S & 0 & 233 & 0 & 0 & 97 & 0 & 12 & 0 & 812 & 965 \\
\hline
O-S & 1000 & 524 & 1000 & 0 & 5 & 1000 & 0 & 1000 & 0 & 33 \\

A-S & 28.58 & 1.99 & 26.89 & -- & 1.20 & 191.27 & -- & 186.32 & -- & 1.03 \\
      \hline
\cline{1-11}
		 &\multicolumn{5}{c|}{$n=1000$} &\multicolumn{5}{c|}{$n=2000$} \\ \cline{2-11}
		& ${\hat\bbj_*^{A}}$& $\hat\bbj_*^{B}$&$\hat\bbj_*^{C}$&$\hat\bbj_*^{(0)}$&$\hat\bbj_*^{(5)}$&${\hat\bbj_*^{A}}$& $\hat\bbj_*^{B}$&$\hat\bbj_*^{C}$&$\hat\bbj_*^{(0)}$&$\hat\bbj_*^{(5)}$\\
\hline
U-S & 0 & 1000 & 0 & 0 & 0 & 0 & 1000 & 0 & 0 & 0 \\
\hline
T-S & 0 & 0 & 0 & 1000 & 942 & 0 & 0 & 0 & 1000 & 928 \\
\hline
O-S & 1000 & 0 & 1000 & 0 & 58 & 1000 & 0 & 1000 & 0 & 72 \\

A-S & 394.33 & -- & 391.74 & -- & 1.05 & 794.99 & -- & 794.74 & -- & 1.04 \\
           \hline

	\end{tabular}
	\caption{ Selection times of the KOO methods with AIC, BIC, $C_p$ thresholds and bootstrap methods under Settings (II) and (iv) based on 1,000 replications. Here U-S, T-S, O-S and A-S stand for number of times a selection method under-specified the true model,  number of times a selection method identified the true model exactly, number of times a selection method over-specified the true model,  and  the average number of spurious variables a selection method identified when it over-specified the model, respectively.
	}\label{KOO4-1}
\end{table}


 \begin{table}[htbp]
 \center
	\begin{tabular}{|c|ccccc|ccccc|}
		\hline
\multicolumn{11}{|c|}{$\alpha=0.2$, $c=0.2$} \\
\cline{1-11}
		 &\multicolumn{5}{c|}{$n=100$} &\multicolumn{5}{c|}{$n=500$} \\ \cline{2-11}
		& ${\hat\bbj_*^{A}}$& $\hat\bbj_*^{B}$&$\hat\bbj_*^{C}$&$\hat\bbj_*^{(0)}$&$\hat\bbj_*^{(5)}$&${\hat\bbj_*^{A}}$& $\hat\bbj_*^{B}$&$\hat\bbj_*^{C}$&$\hat\bbj_*^{(0)}$&$\hat\bbj_*^{(5)}$\\

\hline
U-S & 0 & 63 & 0 & 66 & 0 & 0 & 732 & 0 & 0 & 0 \\
\hline
T-S & 127 & 936 & 144 & 933 & 963 & 169 & 268 & 245 & 998 & 969 \\
\hline
O-S & 873 & 1 & 856 & 1 & 37 & 831 & 0 & 755 & 2 & 31 \\

A-S & 2.38 & 1 & 2.27 & 1 & 1 & 2.16 & -- & 1.92 & 1 & 1 \\
\hline
\cline{1-11}
		 &\multicolumn{5}{c|}{$n=1000$} &\multicolumn{5}{c|}{$n=2000$} \\ \cline{2-11}
		& ${\hat\bbj_*^{A}}$& $\hat\bbj_*^{B}$&$\hat\bbj_*^{C}$&$\hat\bbj_*^{(0)}$&$\hat\bbj_*^{(5)}$&${\hat\bbj_*^{A}}$& $\hat\bbj_*^{B}$&$\hat\bbj_*^{C}$&$\hat\bbj_*^{(0)}$&$\hat\bbj_*^{(5)}$\\
\hline
U-S & 0 & 1000 & 0 & 0 & 0 & 0 & 1000 & 0 & 0 & 0 \\
\hline
T-S & 688 & 0 & 779 & 994 & 949 & 994 & 0 & 998 & 1000 & 937 \\
\hline
O-S & 312 & 0 & 221 & 6 & 51 & 6 & 0 & 2 & 0 & 63 \\

A-S & 1.16 & -- & 1.11 & 1 & 1.02 & 1 & -- & 1 & -- & 1.05 \\
\hline
		\hline
\multicolumn{11}{|c|}{$\alpha=0.2$, $c=0.4$} \\
\cline{1-11}
		 &\multicolumn{5}{c|}{$n=100$} &\multicolumn{5}{c|}{$n=500$} \\ \cline{2-11}
		& ${\hat\bbj_*^{A}}$& $\hat\bbj_*^{B}$&$\hat\bbj_*^{C}$&$\hat\bbj_*^{(0)}$&$\hat\bbj_*^{(5)}$&${\hat\bbj_*^{A}}$& $\hat\bbj_*^{B}$&$\hat\bbj_*^{C}$&$\hat\bbj_*^{(0)}$&$\hat\bbj_*^{(5)}$\\
\hline
U-S & 0 & 973 & 0 & 615 & 100 & 0 & 1000 & 0 & 0 & 0 \\
\hline
T-S & 21 & 27 & 2 & 385 & 863 & 3 & 0 & 0 & 999 & 948 \\
\hline
O-S & 979 & 0 & 998 & 0 & 37 & 997 & 0 & 1000 & 1 & 52 \\

A-S & 3.86 & -- & 6.98 & -- & 1.05 & 8.99 & -- & 46.12 & 1 & 1.02 \\
\hline
\cline{1-11}
		 &\multicolumn{5}{c|}{$n=1000$} &\multicolumn{5}{c|}{$n=2000$} \\ \cline{2-11}
		& ${\hat\bbj_*^{A}}$& $\hat\bbj_*^{B}$&$\hat\bbj_*^{C}$&$\hat\bbj_*^{(0)}$&$\hat\bbj_*^{(5)}$&${\hat\bbj_*^{A}}$& $\hat\bbj_*^{B}$&$\hat\bbj_*^{C}$&$\hat\bbj_*^{(0)}$&$\hat\bbj_*^{(5)}$\\
\hline

U-S & 0 & 1000 & 0 & 0 & 0 & 0 & 1000 & 0 & 0 & 0 \\
\hline
T-S & 2 & 0 & 0 & 999 & 952 & 132 & 0 & 0 & 998 & 947 \\
\hline
O-S & 998 & 0 & 1000 & 1 & 48 & 868 & 0 & 1000 & 2 & 53 \\

A-S & 6.64 & -- & 95.24 & 1 & 1.02 & 2.30 & -- & 193.88 & 1 & 1.06 \\
\hline
		\hline
\multicolumn{11}{|c|}{$\alpha=0.4$, $c=0.2$} \\
\cline{1-11}
		 &\multicolumn{5}{c|}{$n=100$} &\multicolumn{5}{c|}{$n=500$} \\ \cline{2-11}
		& ${\hat\bbj_*^{A}}$& $\hat\bbj_*^{B}$&$\hat\bbj_*^{C}$&$\hat\bbj_*^{(0)}$&$\hat\bbj_*^{(5)}$&${\hat\bbj_*^{A}}$& $\hat\bbj_*^{B}$&$\hat\bbj_*^{C}$&$\hat\bbj_*^{(0)}$&$\hat\bbj_*^{(5)}$\\ \hline
U-S & 0 & 1 & 0 & 384 & 17 & 0 & 1 & 0 & 0 & 0 \\
\hline
T-S & 0 & 841 & 1 & 615 & 939 & 0 & 999 & 0 & 1000 & 944 \\
\hline
O-S & 1000 & 158 & 999 & 1 & 44 & 1000 & 0 & 1000 & 0 & 56 \\

A-S & 15.86 & 1.09 & 7.40 & 1 & 1.07 & 98.76 & -- & 13.10 & -- & 1.02 \\
\hline
\cline{1-11}
		 &\multicolumn{5}{c|}{$n=1000$} &\multicolumn{5}{c|}{$n=2000$} \\ \cline{2-11}
		& ${\hat\bbj_*^{A}}$& $\hat\bbj_*^{B}$&$\hat\bbj_*^{C}$&$\hat\bbj_*^{(0)}$&$\hat\bbj_*^{(5)}$&${\hat\bbj_*^{A}}$& $\hat\bbj_*^{B}$&$\hat\bbj_*^{C}$&$\hat\bbj_*^{(0)}$&$\hat\bbj_*^{(5)}$\\
\hline
U-S & 0 & 6 & 0 & 0 & 0 & 0 & 374 & 0 & 0 & 0 \\
\hline
T-S & 0 & 994 & 0 & 1000 & 954 & 0 & 626 & 197 & 999 & 954 \\
\hline
O-S & 1000 & 0 & 1000 & 0 & 46 & 1000 & 0 & 803 & 1 & 46 \\

A-S & 208.66 & -- & 7.73 & -- & 1.02 & 438.75 & -- & 2.01 & 1 & 1.02 \\
\hline
		\hline
\multicolumn{11}{|c|}{$\alpha=0.4$, $c=0.4$} \\
\cline{1-11}
		 &\multicolumn{5}{c|}{$n=100$} &\multicolumn{5}{c|}{$n=500$} \\ \cline{2-11}
		& ${\hat\bbj_*^{A}}$& $\hat\bbj_*^{B}$&$\hat\bbj_*^{C}$&$\hat\bbj_*^{(0)}$&$\hat\bbj_*^{(5)}$&${\hat\bbj_*^{A}}$& $\hat\bbj_*^{B}$&$\hat\bbj_*^{C}$&$\hat\bbj_*^{(0)}$&$\hat\bbj_*^{(5)}$\\ \hline
U-S & 0 & 263 & 0 & 969 & 684 & 0 & 994 & 0 & 1 & 0 \\
\hline
T-S & 0 & 540 & 0 & 31 & 302 & 0 & 6 & 0 & 999 & 948 \\
\hline
O-S & 1000 & 197 & 1000 & 0 & 14 & 1000 & 0 & 1000 & 0 & 52 \\

A-S & 30.52 & 1.38 & 28.73 & -- & 1.07 & 194.24 & -- & 192.21 & -- & 1.04 \\
\hline
\cline{1-11}
		 &\multicolumn{5}{c|}{$n=1000$} &\multicolumn{5}{c|}{$n=2000$} \\ \cline{2-11}
		& ${\hat\bbj_*^{A}}$& $\hat\bbj_*^{B}$&$\hat\bbj_*^{C}$&$\hat\bbj_*^{(0)}$&$\hat\bbj_*^{(5)}$&${\hat\bbj_*^{A}}$& $\hat\bbj_*^{B}$&$\hat\bbj_*^{C}$&$\hat\bbj_*^{(0)}$&$\hat\bbj_*^{(5)}$\\
\hline
U-S & 0 & 1000 & 0 & 0 & 0 & 0 & 1000 & 0 & 0 & 0 \\
\hline
T-S & 0 & 0 & 0 & 999 & 953 & 0 & 0 & 0 & 999 & 950 \\
\hline
O-S & 1000 & 0 & 1000 & 1 & 47 & 1000 & 0 & 1000 & 1 & 50 \\

A-S & 394.98 & -- & 394.64 & 1 & 1.02 & 795 & -- & 795 & 1 & 1.02 \\
\hline

	\end{tabular}
	\caption{ Selection times of the KOO methods with AIC, BIC, $C_p$ thresholds and bootstrap methods under Settings  (II)  and (v) based on 1,000 replications. Here U-S, T-S, O-S and A-S stand for number of times a selection method under-specified the true model,  number of times a selection method identified the true model exactly, number of times a selection method over-specified the true model,  and  the average number of spurious variables a selection method identified when it over-specified the model, respectively.
	}\label{KOO5}
\end{table}

 \begin{table}[htbp] \center
	\begin{tabular}{|c|ccccc|ccccc|}
		\hline
\multicolumn{11}{|c|}{$\alpha=0.2$, $c=0.2$} \\
\cline{1-11}
		 &\multicolumn{5}{c|}{$n=100$} &\multicolumn{5}{c|}{$n=500$} \\ \cline{2-11}
		& ${\hat\bbj_*^{A}}$& $\hat\bbj_*^{B}$&$\hat\bbj_*^{C}$&$\hat\bbj_*^{(0)}$&$\hat\bbj_*^{(5)}$&${\hat\bbj_*^{A}}$& $\hat\bbj_*^{B}$&$\hat\bbj_*^{C}$&$\hat\bbj_*^{(0)}$&$\hat\bbj_*^{(5)}$\\

\hline
U-S & 0 & 61 & 0 & 0 & 0 & 0 & 788 & 0 & 0 & 0 \\
\hline
T-S & 483 & 939 & 534 & 999 & 974 & 960 & 212 & 977 & 1000 & 956 \\
\hline
O-S & 517 & 0 & 466 & 1 & 26 & 40 & 0 & 23 & 0 & 44 \\

A-S & 1.49 & -- & 1.44 & 1 & 1.08 & 1 & -- & 1 & -- & 1 \\
\hline

\cline{1-11}
		 &\multicolumn{5}{c|}{$n=1000$} &\multicolumn{5}{c|}{$n=2000$} \\ \cline{2-11}
		& ${\hat\bbj_*^{A}}$& $\hat\bbj_*^{B}$&$\hat\bbj_*^{C}$&$\hat\bbj_*^{(0)}$&$\hat\bbj_*^{(5)}$&${\hat\bbj_*^{A}}$& $\hat\bbj_*^{B}$&$\hat\bbj_*^{C}$&$\hat\bbj_*^{(0)}$&$\hat\bbj_*^{(5)}$\\
\hline
U-S & 0 & 1000 & 0 & 0 & 0 & 0 & 1000 & 0 & 0 & 0 \\
\hline
T-S & 1000 & 0 & 1000 & 1000 & 937 & 1000 & 0 & 1000 & 999 & 951 \\
\hline
O-S & 0 & 0 & 0 & 0 & 63 & 0 & 0 & 0 & 1 & 49 \\

A-S & -- & -- & -- & -- & 1.10 & -- & -- & -- & 1 & 1.02 \\
\hline
\hline
\multicolumn{11}{|c|}{$\alpha=0.2$, $c=0.4$} \\
\cline{1-11}
		 &\multicolumn{5}{c|}{$n=100$} &\multicolumn{5}{c|}{$n=500$} \\ \cline{2-11}
		& ${\hat\bbj_*^{A}}$& $\hat\bbj_*^{B}$&$\hat\bbj_*^{C}$&$\hat\bbj_*^{(0)}$&$\hat\bbj_*^{(5)}$&${\hat\bbj_*^{A}}$& $\hat\bbj_*^{B}$&$\hat\bbj_*^{C}$&$\hat\bbj_*^{(0)}$&$\hat\bbj_*^{(5)}$\\
		\hline
U-S & 0 & 987 & 0 & 161 & 13 & 0 & 1000 & 0 & 0 & 0 \\
\hline
T-S & 60 & 13 & 0 & 838 & 951 & 28 & 0 & 0 & 1000 & 958 \\
\hline
O-S & 940 & 0 & 1000 & 1 & 36 & 972 & 0 & 1000 & 0 & 42 \\

A-S & 3.01 & -- & 6.83 & 1 & 1.06 & 3.94 & -- & 45.52 & -- & 1.05 \\
\hline
\cline{1-11}
		 &\multicolumn{5}{c|}{$n=1000$} &\multicolumn{5}{c|}{$n=2000$} \\ \cline{2-11}
		& ${\hat\bbj_*^{A}}$& $\hat\bbj_*^{B}$&$\hat\bbj_*^{C}$&$\hat\bbj_*^{(0)}$&$\hat\bbj_*^{(5)}$&${\hat\bbj_*^{A}}$& $\hat\bbj_*^{B}$&$\hat\bbj_*^{C}$&$\hat\bbj_*^{(0)}$&$\hat\bbj_*^{(5)}$\\
\hline
U-S & 0 & 1000 & 0 & 0 & 0 & 0 & 1000 & 0 & 0 & 0 \\
\hline
T-S & 237 & 0 & 0 & 999 & 949 & 870 & 0 & 0 & 999 & 954 \\
\hline
O-S & 763 & 0 & 1000 & 1 & 51 & 130 & 0 & 1000 & 1 & 46 \\

A-S & 1.92 & -- & 94.99 & 1 & 1 & 1.05 & -- & 193.56 & 1 & 1 \\
\hline
\hline
\multicolumn{11}{|c|}{$\alpha=0.4$, $c=0.2$} \\
\cline{1-11}
		 &\multicolumn{5}{c|}{$n=100$} &\multicolumn{5}{c|}{$n=500$} \\ \cline{2-11}
		& ${\hat\bbj_*^{A}}$& $\hat\bbj_*^{B}$&$\hat\bbj_*^{C}$&$\hat\bbj_*^{(0)}$&$\hat\bbj_*^{(5)}$&${\hat\bbj_*^{A}}$& $\hat\bbj_*^{B}$&$\hat\bbj_*^{C}$&$\hat\bbj_*^{(0)}$&$\hat\bbj_*^{(5)}$\\
		\hline
U-S & 0 & 0 & 0 & 3 & 0 & 0 & 0 & 0 & 0 & 0 \\
\hline
T-S & 0 & 995 & 17 & 996 & 941 & 0 & 1000 & 82 & 998 & 943 \\
\hline
O-S & 1000 & 5 & 983 & 1 & 59 & 1000 & 0 & 918 & 2 & 57 \\

A-S & 16.54 & 1 & 4.84 & 1 & 1.07 & 102.68 & -- & 2.96 & 1 & 1.05 \\
 \hline
\cline{1-11}
		 &\multicolumn{5}{c|}{$n=1000$} &\multicolumn{5}{c|}{$n=2000$} \\ \cline{2-11}
		& ${\hat\bbj_*^{A}}$& $\hat\bbj_*^{B}$&$\hat\bbj_*^{C}$&$\hat\bbj_*^{(0)}$&$\hat\bbj_*^{(5)}$&${\hat\bbj_*^{A}}$& $\hat\bbj_*^{B}$&$\hat\bbj_*^{C}$&$\hat\bbj_*^{(0)}$&$\hat\bbj_*^{(5)}$\\
\hline
U-S & 0 & 3 & 0 & 0 & 0 & 0 & 345 & 0 & 0 & 0 \\
\hline
T-S & 0 & 997 & 655 & 1000 & 946 & 0 & 655 & 992 & 1000 & 951 \\
\hline
O-S & 1000 & 0 & 345 & 0 & 54 & 1000 & 0 & 8 & 0 & 49 \\

A-S & 217.91 & -- & 1.17 & -- & 1.07 & 465.89 & -- & 1 & -- & 1.06 \\
  \hline
\hline
\multicolumn{11}{|c|}{$\alpha=0.4$, $c=0.4$} \\
\cline{1-11}
		 &\multicolumn{5}{c|}{$n=100$} &\multicolumn{5}{c|}{$n=500$} \\ \cline{2-11}
		& ${\hat\bbj_*^{A}}$& $\hat\bbj_*^{B}$&$\hat\bbj_*^{C}$&$\hat\bbj_*^{(0)}$&$\hat\bbj_*^{(5)}$&${\hat\bbj_*^{A}}$& $\hat\bbj_*^{B}$&$\hat\bbj_*^{C}$&$\hat\bbj_*^{(0)}$&$\hat\bbj_*^{(5)}$\\
		\hline
U-S & 0 & 237 & 0 & 961 & 464 & 0 & 997 & 0 & 0 & 0 \\
\hline
T-S & 0 & 658 & 0 & 39 & 505 & 0 & 3 & 0 & 1000 & 952 \\
\hline
O-S & 1000 & 105 & 1000 & 0 & 31 & 1000 & 0 & 1000 & 0 & 48 \\

A-S & 32.02 & 1.32 & 30.24 & -- & 1.13 & 194.90 & -- & 194.26 & -- & 1.02 \\
\hline
\cline{1-11}
		 &\multicolumn{5}{c|}{$n=1000$} &\multicolumn{5}{c|}{$n=2000$} \\ \cline{2-11}
		& ${\hat\bbj_*^{A}}$& $\hat\bbj_*^{B}$&$\hat\bbj_*^{C}$&$\hat\bbj_*^{(0)}$&$\hat\bbj_*^{(5)}$&${\hat\bbj_*^{A}}$& $\hat\bbj_*^{B}$&$\hat\bbj_*^{C}$&$\hat\bbj_*^{(0)}$&$\hat\bbj_*^{(5)}$\\
\hline
U-S & 0 & 1000 & 0 & 0 & 0 & 0 & 1000 & 0 & 0 & 0 \\
\hline
T-S & 0 & 0 & 0 & 999 & 937 & 0 & 0 & 0 & 999 & 939 \\
\hline
O-S & 1000 & 0 & 1000 & 1 & 63 & 1000 & 0 & 1000 & 1 & 61 \\

A-S & 395 & -- & 394.98 & 1 & 1.02 & 795 & -- & 795 & 1 & 1.05 \\
            \hline

	\end{tabular}
	\caption{ Selection times of the KOO methods with AIC, BIC, $C_p$ thresholds and bootstrap methods under Settings (II) and (vi) based on 1,000 replications. Here U-S , T-S, O-S and A-S stand for number of times a selection method under-specified the true model,  number of times a selection method identified the true model exactly, number of times a selection method over-specified the true model,  and  the average number of spurious variables a selection method identified when it over-specified the model, respectively.
	}\label{KOO6}
\end{table}

\section{Proofs of Theorems \ref{limit1}--\ref{clt2}}

In this appendix, we present the proofs of Theorems \ref{limit1}--\ref{clt2} under general distributions by random matrix theory. Before that, we first give some notation and preliminary results which will be used in the sequel frequently.
For simplicity, we
denote $\bbM=p^{-1}\bbE'\bbQ\bbE$ and $\bbM_l=\frac1p\bbE_l'\bbQ\bbE_l$, where $\bbE_l$ is the $n\times (p-1)$ submatrix of $\bbE$ with the $l$-th column removed. Denote by $\E_l$ the conditional expectation given $\{\bbe_1,\dots,\bbe_l\}$ and by $\E_0=\E$ the unconditional expectation, where $\bbe_i $ is the $n$-vector of the $i$-th column
of $\bbE$. Let $\bbb=\bgS^{-1/2}\bm\Theta_*\bbX_*'\bba_{1}$ and $\bbb_{l}$ be the $p-1$ sub-vector of $\bbb$ with the $l$-th entry $b_{l}$ removed. Then we have
\begin{align*}
 \bba_{1}'\bbY\bgS^{-1/2}(\bbE'\bbQ\bbE)^{-1}\bgS^{-1/2}\bbY'\bba_{1}=p^{-1}(\bbb'+\bba_{1}'\bbE)\bbM^{-1}(\bbE'\bba_{1}+\bbb).
\end{align*}

Modifying the truncation argument of \cite{BaiC18L}, we can assume that the variables $\{e_{ij},i=1\dots n,j=1\dots p\}$ satisfy the following additional condition:
\begin{align}\label{addassum}
	 |e_{ij}|<\eta_n\sqrt{n}, \quad \mbox{for all $i,j$},
	 \end{align}
	 where  $\eta_n\to0$ slowly enough.
By the theorem in the appendix of \cite{BaiS04C}, we know for any positive constant $d<(1-\sqrt{c})^2$ and any given $t>0$, $\lambda_{\min}^{\frac{1}{n}\bbE'\bbE}\asto (1-\sqrt{c})^2$ and
\begin{align*}
 \P(\lambda_{\min}^{\frac{1}{n}\bbE'\bbE}<d)=o(n^{-t}).
\end{align*}
Moreover, by Theorem 1.2 in \cite{BaiS99E}, we conclude that for any positive constant $d<(1-\sqrt{c/(1-\alpha)})^2$ and any given $t>0$, $\lambda_{\min}^{\frac{1}{n}\bbE'\bbQ\bbE}\asto (1-\sqrt{c/(1-\alpha)})^2$ and
\begin{align*}
 \P(\lambda_{\min}^{\frac{1}{n}\bbE'\bbQ\bbE}<d)=o(n^{-t}).
\end{align*}
Denote
 \begin{align*}
 \beta_l&=\frac1p\bbe_l'\bbQ\bbe_l-\frac1{p^2}\bbe_l'\bbQ\bbE_l\bbM_l^{-1}\bbE_l'\bbQ\bbe_l
 \end{align*}
 and
$$\beta^{tr}_1=\tr[\frac1p\bbQ-\frac1{p^2}\bbQ\bbE_l\bbM_l^{-1}\bbE_l'\bbQ]=\frac{n-k-p+1}{p}.$$
 It follows that
 \begin{align}\label{betr}
	\frac1{\beta_l}
	=\frac1{\beta^{tr}_1}-\frac{\xi_l}{\beta_l\beta^{tr}_1},
\end{align}
where $\xi_l=\beta_l-\beta^{tr}_1$.
By Lemma 7.2 in \cite{BaiY05C} (see Lemma \ref{BYinq}), we have that for any $2\leq\ell\leq\log(n),$
\begin{align}\label{Exil}
 \E|\xi_l|^\ell=O(p^{-1}\eta_n^{2\ell-4}),
\end{align}
which indicate that $\xi_l$ tends to 0 in probability with order of $o(n^{-t})$ for any $t>0$.
Analogously, for application later, together with the condition that $p^{-1/2}\bbb$ is bounded in Euclidean norm,
we conclude that for $2\leq\ell\leq\log(n),$
\begin{gather}\label{e1}
\max\{\E|p^{-1}
\bbb_{l}'\bbM_l^{-1}\bbE_l'\bbQ\bbe_l|^{\ell}, \E|p^{-1}
\bba_{l}\bbE_l'\bbM_l^{-1}\bbE_l'\bbQ\bbe_l|^{\ell}\}=O(p^{{\ell/2-1}}\eta_n^{\ell-2})
\end{gather}
and
\begin{gather}\label{Psl}
\max\{\E|p^{-2}\bbb_{l}'\bmPs_l\bbb_{l}|^{\ell},\E|p^{-2}\bba_{1}\bbE_l'\bmPs_l\bbb_{l}|^{\ell},\E|p^{-2}\bba_{1}\bbE_l'\bmPs_l\bbE_l\bba_{1}|^{\ell} \}=O(p^{{\ell-2}}\eta_n^{2\ell-4})
\end{gather}
where
$$\bmPs_l=\bbM_l^{-1}\bbE_l'\bbQ\bbe_l\bbe_l'\bbQ\bbE_l\bbM_l^{-1}-\bbM_l^{-1}\bbE_l'\bbQ\bbE_l\bbM_l^{-1}.$$
As we only need to prove the weak convergence conclusion and $\beta^{tr}_1\to(1-\alpha-c)/c>0$,
thus throughout the proofs, we can safely assume $\|\bbM^{-1}\|$, $\|\bbM_l^{-1}\|$ and $|1/\beta_l|$ are all bounded for large $n$.

\subsection{Proof of Theorem \ref{limit1}}
Theorem \ref{limit1} can be obtained from Proposition 3.1 in \cite{BaiC22A} with letting $z\downarrow0$ directly. That is, for any non-random vectors $\bbr_1$, $\bbr_2$ $\bbr_3$ and $\bbr_4$ with suitable dimensions and bounded in Euclidean norm, under conditions in Theorem \ref{limit1}, we have that for any $t>0$ and $\varepsilon>0$,
\begin{align}
\label{le3.1.1}
\P\left(\left|\bbr'_1\bbM^{-1}\bbr_2-\frac{c_n\bbr'_1\bbr_2}{1-c_n-\alpha_n}\right|\geq \varepsilon\right)=o(n^{-t}),
\end{align}
\begin{align}\label{le3.1.2}
\P\left(\left|\frac{1}{\sqrt{p}}\bbr_1'\bbM^{-1}\bbE'\bbr_3\right|\geq\varepsilon\right)=o(n^{-t}),
\end{align}
and
\begin{align}\label{le3.1.3}
\P\left(\left|\frac{1}{p}\bbr'_3\bbE\bbM^{-1}\bbE'\bbr_4
-\frac{c_n\bbr'_3\bbr_4}{1-c_n-\alpha_n}+\frac{c_n^2\bbr'_3\bbQ\bbr_4}{(1-c_n-\alpha_n)(1-\alpha_n)}\right|\geq \varepsilon\right)
=o(n^{-t}).
\end{align}
Then the proof of Theorem \ref{limit1} is complete.

\subsection{Proof of Theorem \ref{clt1}}

 For simple presentation, in the following we assume $\{1,\dots,q\}\subset [k]\backslash\bbj_*$ and $j_i=i$.
To prove Theorem \ref{clt1}, it is sufficient to show that for any non-null vector $\bbh=(h_1,\dots,h_q)'$, $\sqrt{p}[(\cK_{1},\dots,\cK_{q})\bbh-\frac{c_n}{1-c_n-\alpha_n}{\bf1}_q'\bbh]
$ converges weakly to a normal distribution with mean zero and variance $\frac{c^2}{(1-\alpha_n-c_n)^2}[\frac{2(1-\alpha_n)}{(1-\alpha_n-c_n)}\bbh'(\cA_q'\cA_q)^2\bbh+\tau\bbh'(\cA_q\circ\cA_q)'(\cA_q\circ\cA_q)\bbh]$, where $\cA_q=(\bba_{1},\dots,\bba_{q})$.

 We split the proof of this theorem into two parts. First, we show the asymptotic normality of the sequence of random variables
 \begin{align*}
 	\cM_{1}^{(n)}:=\sqrt{p}[(\cK_1,\dots,\cK_q)\bbh-\E(\cK_1,\dots,\cK_q)\bbh].
 \end{align*}
 Second, we prove the non-random sequence
 \begin{align*}
 	\cM_{2}^{(n)}&=\sqrt{p}[\E(\cK_1,\dots,\cK_q)\bbh-\frac{c_n{\bf1}_q'\bbh}{1-c_n-\alpha_n}]
 \end{align*} tends to zero.
 Note that for notational simplicity the superscript $^{(n)}$ in $\cM_{1}^{(n)}$ and $\cM_{2}^{(n)}$ are suppressed in the sequel.

 We start to
 consider $\cM_1$. Let $\bcH=Diag(h_1,\dots,h_q)$. It follows that
 \begin{align*}
 	\cM_1=&p^{-1/2}\sum_{l=1}^p(\E_l-\E_{l-1})\tr(\bbE\bbM^{-1}\bbE'\cA_q\bcH\cA_q')\\
 	=&p^{-1/2}\sum_{l=1}^p(\E_l-\E_{l-1})\tr[(\bbE \bbM^{-1}\bbE'-\bbE_l \bbM_l^{-1}\bbE_l')\cA_q\bcH\cA_q'].
 \end{align*}
 By the inversion formula of block matrix, we obtain
 \begin{align}
 	& \bbE\bbM^{-1}\bbE'
 	-\bbE_l\bbM_l^{-1}\bbE_l'
 	=\frac{1}{\beta_lp^2} \bbE_l\bbM_l^{-1}\bbE_l'\bbQ\bbe_l\bbe_l'\bbQ\bbE_l\bbM_l^{-1}\bbE_l'\nonumber\\
 	&-\frac{1}{\beta_lp} \bbE_l\bbM_l^{-1}\bbE_l'\bbQ\bbe_l\bbe_l'-\frac{1}{\beta_lp} \bbe_l\bbe_l'\bbQ\bbE_l\bbM_l^{-1}\bbE_l'
 	+\frac{\bbe_l\bbe_l'}{\beta_l}.\label{blk}
 \end{align}
 Then, by the equation \eqref{betr}, 
 we can rewrite $ \cM_{1}$ as
 \begin{align*}
 	\cM_1=\frac{1}{p^{1/2}\beta^{tr}_1}\sum_{l=1}^p\E_l({\bbe_l'\bmGa_l\bbe_l-\tr\bmGa_l})-\frac{1}{p^{1/2}(\beta^{tr}_1
 	)^2}\sum_{l=1}^p\E_l({\xi_l\tr\bmGa_l})+\cM_{10},
 \end{align*}
 where
 \begin{align*}
\bmGa_l=&p^{-2}\bbQ\bbE_l\bbM_l^{-1}\bbE_l'\cA_q\bcH\cA_q'\bbE_l\bbM_l^{-1}\bbE_l'\bbQ
 	-p^{-1}\cA_q\bcH\cA_q'\bbE_l\bbM_l^{-1}\bbE_l'\bbQ\\
 	&-p^{-1}\bbQ\bbE_l\bbM_l^{-1} \bbE_l\cA_q\bcH\cA_q'+\cA_q\bcH\cA_q'
 \end{align*}
 and
 \begin{align*}
 	\cM_{10}=&-\sum_{l=1}^p(\E_l-\E_{l-1})\frac{\xi_l(\bbe_l'\bmGa_l\bbe_l-\tr\bmGa_l)}{p^{1/2}(\beta^{tr}_1)^2}+\sum_{l=1}^p(\E_l-\E_{l-1})\frac{\xi_l^2\bbe_l'\bmGa_l\bbe_l}{p^{1/2}\beta_l(\beta^{tr}_1)^2}.
 \end{align*}
 It follows from \eqref{Exil} that
 \begin{align*}
 \E|\frac{1}{p^{1/2}}\sum_{l=1}^p\E_l({\xi_l}\tr\bmGa_l)|^2= \frac{1}{p}\sum_{l=1}^p\E|\E_l({\xi_l}\tr\bmGa_l)|^2=O(p^{-1}).
\end{align*}
 By \eqref{e1}, \eqref{Psl} and the BurkH\"{o}lder's inequality (see Lemma \ref{burkholder}) we have that $\cM_{10}=o_p(1)$.
 Applying Lemma 2.7 in \cite{BaiS98N} (see Lemma \ref{BSinq}), we have that
 \begin{align*}
 	\E|\E_l(\bbe_l'\bmGa_l\bbe_l-\tr\bmGa_l)|^4\leq \E|\bbe_l'\bmGa_l\bbe_l-\tr\bmGa_l|^4=O(p\eta_n^4)
 \end{align*}
 which verifies the condition (ii) in Lemma \ref{martingaleCLT}. Thus, what we need is to obtain the limit of
 \begin{align*}
 	\frac{1}{p(\beta^{tr}_1)^2}\sum_{l=1}^p\E_{l-1}\{\E_l[{\bbe_l'\bmGa_l\bbe_l-\tr\bmGa_l}]\}^2.
 \end{align*}
 By Lemma \ref{quadratic_equ}, we have that
 \begin{align*}
 	\E_{l-1}\{\E_l[{\bbe_l'\bmGa_l\bbe_l-\tr\bmGa_l}]\}^2
 	=2\E_{l-1}\tr(\E_l\bmGa_l\E_l\bmGa_l) +\tau\E_{l-1}\tr(\E_l\bmGa_l\circ\E_l\bmGa_l),
 \end{align*}
 where $\circ$ stands for the Hadamard product.
 Notice that
 \begin{align}
\tr(\E_l\bmGa_l\E_l\bmGa_l)=&p^{-4}\tr(\E_l\bbQ\bbE_l\bbM_l^{-1}\bbE_l'\cA_q\bcH\cA_q'\bbE_l\bbM_l^{-1}\bbE_l'\bbQ)^2\label{trElGa21}\\
 	&+p^{-2}2\tr[\E_l(\cA_q\bcH\cA_q'\bbE_l\bbM_l^{-1}\bbE_l'\bbQ)\E_l(\bbQ\bbE_l\bbM_l^{-1} \bbE_l\cA_q\bcH\cA_q')]\label{trElGa22}\\
 	&+\tr(\cA_q\bcH\cA_q')^2.\nonumber
 \end{align}
 		Let $\widetilde\bbE_l$ be $\bbE_l$ by replacing $\{\bbe_{l+1},\dots,\bbe_{p}\}$ with $\{\tilde\bbe_{l+1},\dots,\tilde\bbe_{	p}\}$, where $\{\tilde\bbe_i\}$ are i.i.d. copies of $\bbe_1$. We define $\widetilde\bbM_l=\frac1p\widetilde\bbE_l'\bbQ\widetilde\bbE_l$, correspondingly. As $\bcH$ is a diagonal matrix, thus we have that
 		\begin{align*}
 			&\E_{l-1} \tr(\E_l\bbQ\bbE_l\bbM_l^{-1}\bbE_l'\cA_q\bcH\cA_q'\bbE_l\bbM_l^{-1}\bbE_l'\bbQ)^2\nonumber\\
 			=&\E_l\tr[\bcH\cA_q'\bbE_l\bbM_l^{-1}\bbE_l'\widetilde{\bm\Xi}_l\bbE_l\bbM_l^{-1}\bbE_l'\cA_q]\\
 			=&\E_l\sum_{i=1}^qh_i\bba_i'\bbE_l\bbM_l^{-1}\bbE_l'\widetilde{\bm\Xi}_l\bbE_l\bbM_l^{-1}\bbE_l'\bba_i,
 		\end{align*}
 		where $\widetilde{\bm\Xi}_l=\bbQ\widetilde\bbE_l\widetilde\bbM_l^{-1}\widetilde\bbE_l'\cA_q\bcH\cA_q'\widetilde\bbE_l\widetilde\bbM_l^{-1}\widetilde\bbE_l'\bbQ$.
 		By applying the inversion formula of block matrix to $\bbM_l$, similar to \eqref{blk}, we have that
 		\begin{align}\label{beamel}
 			&\bbE_l\bbM_l^{-1}\bbE_l'-\bbE_{lp}\bbM_{lp}^{-1}\bbE_{lp}'
 			=\frac{1}{\beta_{lp}p^2} \bbE_{lp}\bbM_{lp}^{-1}\bbE_{lp}'\bbQ\bbe_p\bbe_p'\bbQ\bbE_{lp}\bbM_{lp}^{-1}\bbE_{lp}'\no\\
 			&
 			-\frac{1}{\beta_{lp}p}\bbE_{lp}\bbM_{lp}^{-1}\bbE_{lp}'\bbQ\bbe_p\bbe_p'-\frac{1}{\beta_{lp}p}\bbe_p\bbe_p'\bbQ\bbE_{lp}\bbM_{lp}^{-1}\bbE_{lp}'+\frac{\bbe_p\bbe_p'}{\beta_{lp}},
 		\end{align}
 		where $\bbE_{li}$ is the $n\times (i-2)$ submatrix of $\bbE$ with the columns $\{\bbe_l, \bbe_i,\dots,\bbe_p\}$ removed, $\bbM_{li}=\frac1p\bbE_{li}'\bbQ\bbE_{li}$ and
 		\begin{align}
 			\beta_{li}&=\frac1p\bbe_i'\bbQ\bbe_i-\frac1{p^2}\bbe_i'\bbQ\bbE_{li}\bbM_{li}^{-1}\bbE_{li}'\bbQ\bbe_i.
 		\end{align}
 		Denote
 		$$\beta^{tr}_i=\tr[\frac1p\bbQ-\frac1{p^2}\bbQ\bbE_{li}\bbM_{li}^{-1}\bbE_{li}'\bbQ]=\frac{n-k-p+i}{p}$$
 		and
 		$$\xi_{li}=\beta_{li}-\beta^{tr}_i.$$
 		We can easily check that the orders of
 		\eqref{Exil}--\eqref{Psl} hold for replacing the subscripts $l$ by $li$. Thus, analogous to the above discussion, we have that
 		\begin{align*}
&\E_l\bba_i'\bbE_l\bbM_l^{-1}\bbE_l'\widetilde{\bm\Xi}_l\bbE_l\bbM_l^{-1}\bbE_l'\bba_i\\
 			=&\E_l[\bba_i'\bbE_{lp}\bbM_{lp}^{-1}\bbE_{lp}'\widetilde{\bm\Xi}_l\bbE_{lp}\bbM_{lp}^{-1}\bbE_{lp}'\bba_i]+o_p(p^3)\\
 			=&\E_l[\bba_i'\bbE_{lp}\bbM_{lp}^{-1}\bbE_{lp}'\widetilde{\bm\Xi}_{lp}\bbE_{lp}\bbM_{lp}^{-1}\bbE_{lp}'\bba_i]+o_p(p^3),
 		\end{align*}
 		where $\widetilde{\bm\Xi}_{lp}$ is defined by removing $b_p$ and $\tilde \bbe_p$ from $\widetilde{\bm\Xi}_{l}$. We then repeat the procedure that remove $b_i$, $\bbe_i$ and $\tilde \bbe_i$, $i=l+1,\dots,p-1$ from ${\bm\Xi}_{lp}$ and $\widetilde{\bm\Xi}_{lp}$, respectively. Then applying Proposition 3.1 in \citep{BaiC22A}, we finally obtain that
 		\begin{align}
 			\eqref{trElGa21}
 			=&\sum_{i,j}^qh_ih_j
 			p^{-2}[\bba_i'\bbE_{l(l-1)}\bbM_{l(l-1)}^{-1}\bbE_{l(l-1)}'\bba_j)]^2+o_p(1 )\nonumber\\
 			=&\frac{(l-1)^2}{(n-k-l+1)^2}\bbh'(\cA_q'\cA_q)^2\bbh+o_p(1).\label{p4Mlp}
 		\end{align}

 		Analogously, we have that
 		\begin{align*}
 			\eqref{trElGa22}=&p^{-2}\sum_{i=1}^qh_ih_j\E_l(\bba_i'\bbE_l\bbM_l^{-1}\bbE_l'\bbQ)\E_l(\bbQ\bbE_l\bbM_l^{-1} \bbE_l\bba_i)\\
 			=&\frac{(l-1)\bbh'(\cA_q'\cA_q)^2\bbh}{n-k-l+1}+o_p(1),
 		\end{align*}
 		which together with \eqref{p4Mlp} and the fact that $\tr(\cA_q\bcH\cA_q')^2=\bbh'(\cA_q'\cA_q)^2\bbh$ implies
 		\begin{align*}
 			\frac{1}{p}\sum_{l=1}^p \E_{l-1}\tr(\E_l\bmGa_l\E_l\bmGa_l)=&\frac{\bbh'(\cA_q'\cA_q)^2\bbh}{p}\sum_{l=1}^p\left(\frac{l-1}{n-k-l+1}+1\right)^2+o_p(1)\\
 			=&\bbh'(\cA_q'\cA_q)^2\bbh\frac{1-\alpha_n}{1-\alpha_n-c_n}+o_p(1).
 		\end{align*}

 		We now turn to prove the term $\E_{l-1}\tr(\E_l\bmGa_l\circ\E_l\bmGa_l)=o_p(1)$.
 		Let $\bbu_j$ be an $n$-dimensional column vector with the j-th element being 1 and 0 otherwise. Then we have that
 		\begin{align*}
 			&\E(\E_{l-1}\tr(\E_l\bmGa_l\circ\E_l\bmGa_l))
 			=\sum_{j=1}^n\E(\bbu_j'\E_l\bmGa_l\bbu_j)^2\\
 			=&\sum_{j=1}^n(\E\bbu_j'\bmGa_l\bbu_j)^2+\sum_{j=1}^n\E(\E_l\bbu_j'\bmGa_l\bbu_j-\E\bbu_j'\bmGa_l\bbu_j)^2.
 		\end{align*}
 		By BurkH\"{o}lder's inequality, we have that
 		\begin{align*}
 &\E(\E_l\bbu_j'\bmGa_l\bbu_j-\E\bbu_j'\bmGa_l\bbu_j)^2
 \leq \sum_{s\neq l}^p\E(\bbu_j'\bmGa_l\bbu_j-\bbu_j'\bmGa_{l\cdot s}\bbu_j)^2,
\end{align*}
where $\bmGa_{l\cdot s}$ is the submatrix of $\bmGa_{l}$ with $\bbe_s$ removed.
 Applying the inversion formula of block matrix \eqref{beamel} again, we have that
 \begin{align}\label{beamels}
\bbQ\bbE_l\bbM_l^{-1}\bbE_l'=&\bbQ\bbE_{l\cdot s}\bbM_{l\cdot s}^{-1}\bbE_{l\cdot s}'+\frac{\bbQ\bbe_s\bbe_s'}{\beta_{l\cdot s}}\\
+&\frac{1}{\beta_{l\cdot s}p^2} \bbQ\bbE_{l\cdot s}\bbM_{l\cdot s}^{-1}\bbE_{l\cdot s}'\bbQ\bbe_s\bbe_s'\bbQ\bbE_{l\cdot s}\bbM_{l\cdot s}^{-1}\bbE_{l\cdot s}'\no\\
&
 -\frac{1}{\beta_{l\cdot s}p} \bbQ\bbE_{l\cdot s}\bbM_{l\cdot s}^{-1}\bbE_{l\cdot s}'\bbQ\bbe_s\bbe_s'-\frac{1}{\beta_{l\cdot s}p}\bbQ\bbe_s\bbe_s'\bbQ\bbE_{l\cdot s}\bbM_{l\cdot s}^{-1}\bbE_{l\cdot s}'\no\\
:=&\bcU_{ls0}+\bcU_{ls1}+\bcU_{ls2}-\bcU_{l s3}-\bcU_{ls4}\no
\end{align}
and
 		\begin{align*}
 			&\E(\bbu_j'\bmGa_l\bbu_j-\bbu_j'\bmGa_{ls}\bbu_j)^2\\
 			=&\E\{p^{-4}\sum_{i=1}^qh_i[\bbu_j'(\bcU_{ls1}+\bcU_{ls2}-\bcU_{ls3}-\bcU_{ls4})\bba_i]^2\\
 			&+2p^{-2}\sum_{i=1}^qh_i\bbu_j'\bcU_{ls0}\bba_i\bbu_j'(\bcU_{ls1}+\bcU_{ls2}-\bcU_{ls3}-\bcU_{ls4})\bba_i\\
 			&-2p^{-1}\sum_{i=1}^qh_i\bbu_j'\bba_i\bbu_j'(\bcU_{ls1}+\bcU_{ls2}-\bcU_{ls3}-\bcU_{ls4})\bba_i\}^2.
 		\end{align*}
 		
 		We first consider $\E (\bbu_j'\bcU_{ls1}\bba_i\bba_i'\bcU_{ls1}'\bbu_j)^2$. Notice
 		that
 		\begin{align*}
& \E (\bbe_s'\bbQ\bbu_j\bbu_j'{\bbQ\bbe_s\bbe_s'}\bba_i\bba_i'\bbe_s)^2\\
 =&\E [(\bbe_s'\bbQ\bbu_j\bbu_j'\bbQ\bbe_s-\bbu_j'\bbQ\bbu_j+\bbu_j'\bbQ\bbu_j)(\bbe_s'\bba_i\bba_i'\bbe_s-1+1)]^2\\
 =&\E [(\bbe_s'\bbQ\bbu_j\bbu_j'\bbQ\bbe_s-\bbu_j'\bbQ\bbu_j)(\bbe_s'\bba_i\bba_i'\bbe_s-1)+\bbu_j'\bbQ\bbu_j(\bbe_s'\bba_i\bba_i'\bbe_s-1)\\&+(\bbe_s'\bbQ\bbu_j\bbu_j'\bbQ\bbe_s-\bbu_j'\bbQ\bbu_j)+\bbu_j'\bbQ\bbu_j]^2.
\end{align*}
From Lemma \ref{BYinq} we have that for $\ell\geq2$,
\begin{align*}
	\E (\bbe_s'\bbQ\bbu_j\bbu_j'\bbQ\bbe_s-\bbu_j'\bbQ\bbu_j)^\ell =O(n^{\ell-1}\eta_n^{2\ell-4})
\end{align*}
and
\begin{align*}
	\E (\bbe_s'\bba_i\bba_i'\bbe_s-1)^\ell =O(n^{\ell-1}\eta_n^{2\ell-4}).
\end{align*}
 		 	Then, together with the fact that $	\frac1{\beta_{l\cdot s}}$ and $\bbu_j'\bbQ\bbu_j$ are both bounded, and the
 		 		 $c_r$-inequality, we obtain
 		 \begin{align*}
 		 	\E (\bbu_j'\bcU_{ls1}\bba_i\bba_i'\bcU_{ls1}'\bbu_j)^2=O(n^{3}\eta_n^{-4}).	
 		 \end{align*}	
 	
 	 Next, we consider the term $\E (\bbu_j'\bcU_{ls0}\bba_i\bba_i'\bcU_{ls1}'\bbu_j)^2$. It follows $\bba_i'\bbQ=\bf0$ that
 	 \begin{align*}
 	 	&\E(\bbu_j'\bbQ\bbE_{l\cdot s}\bbM_{l\cdot s}^{-1}\bbE_{l\cdot s}'\bba_i\bba_i'\bbe_s\bbe_s'\bbQ\bbu_j)^2\\
 	 	=&\E(\bbe_s'\bbQ\bbu_j\bbu_j'\bbQ\bbE_{l\cdot s}\bbM_{l\cdot s}^{-1}\bbE_{l\cdot s}'\bba_i\bba_i'\bbe_s-\bba_i'\bbQ\bbu_j\bbu_j'\bbQ\bbE_{l\cdot s}\bbM_{l\cdot s}^{-1}\bbE_{l\cdot s}'\bba_i)^2=O(1).
 	 \end{align*}
 	 As other terms are analogous, thus by combining the above argument, we conclude that $\sum_{j=1}^n\E(\E_l\bbu_j'\bmGa_l\bbu_j-\E\bbu_j'\bmGa_l\bbu_j)^2=o(1)$.

 	 For $\sum_{j=1}^n(\E\bbu_j'\bmGa_l\bbu_j)^2$, it follows from the assumption that
 	 $\{e_{ij}\}$ are i.i.d., \begin{align*}
 \sum_{j=1}^n(\E\bbu_j'\bmGa_l\bbu_j)^2=&\sum_{j=1}^n(n^{-1}p^{-1}\E \tr \bcH\cA_q'\bbE_l\bbM_l^{-1}\bbE_l'\cA_q+\bbu_j'\cA_q\bcH\cA_q'\bbu_j)^2\\
 =&\sum_{j=1}^n(\bbu_j'\cA_q\bcH\cA_q'\bbu_j)^2+O(n^{-1})\\
 =&\bbh'(\cA_q\circ\cA_q)'(\cA_q\circ\cA_q)\bbh+O(n^{-1}).
\end{align*}
Here we use a result similar to \eqref{le3.1.3}, that is $$\E
\frac{1}{p}\bbr'_3\bbE\bbM^{-1}\bbE'\bbr_4
-\frac{c_n\bbr'_3\bbr_4}{1-c_n-\alpha_n}+\frac{c_n^2\bbr'_3\bbQ\bbr_4}{(1-c_n-\alpha_n)(1-\alpha_n)}\to0,$$ and the proof can be found in the proof of Proposition 3.1 in \cite{BaiC22A}.
 Then we conclude that
 		\begin{align*}
 			\sum_{l=1}^p\E_{l-1}\tr(\E_l\bmGa_l\circ\E_l\bmGa_l)=\bbh'(\cA_q\circ\cA_q)'(\cA_q\circ\cA_q)\bbh+o_p(1).
 		\end{align*}
 		
 		Next, we will prove that the non-random sequence
 		\begin{align*}
 			\cM_{2}= \cM_{2}^{(n)}=o(1).
 		\end{align*}
 		Write $\bbM^{-1}=(M^{ij})$. Without loss of generality, we only need to prove $p^{-1}\E\bba_{1}'\bbE\bbM^{-1}\bbE'\bba_{1}-\frac{c_n}{1-c_n-\alpha_n}=o(p^{-1/2})$. Because the entries of $\bbE$ are i.i.d., we have
 		\begin{align}
 			&p^{-1}\E\bba_{1}'\bbE\bbM^{-1}\bbE'\bba_{1}=p^{-1}\sum_{i,j=1}^p\E\bba_{1}'\bbe_iM^{ij}\bbe_j'\bba_{1}\nonumber\\ 		=&\E\bbe_1'\bba_{1}\bba_{1}'\bbe_1M^{11}+(p-1)\E\bba_{1}'\bbe_1M^{12}\bbe_2'\bba_{1}.\label{ea2}
 		\end{align}
 		From the inverse matrix formula, we know that
 		\begin{align*}
 			M^{11}=\frac1{\beta_1}
 			=\frac1{\beta^{tr}_1}-\frac{\xi_1}{(\beta^{tr}_1
 				)^2}+\frac{\xi_1^2}{\beta_1(\beta^{tr}_1)^2}.
 		\end{align*}
 		and
 		\begin{align}
 			M^{12}
 			=\frac{\bbe_1'\bbQ\bbE_1\bbM_1^{-1}\bbu_1}{p\beta^{tr}_1}-\frac{\xi_1\bbe_1'\bbQ\bbE_1\bbM_1^{-1}\bbu_1}{p(\beta^{tr}_1
 				)^2}+\frac{\xi_1^2\bbe_1'\bbQ\bbE_1\bbM_1^{-1}\bbu_1}{p\beta_1(\beta^{tr}_1)^2}.\label{M12}
 		\end{align}
 		Then it follows from \eqref{betr}, \eqref{Exil} and the H\"{o}lder's inequality that
 		\begin{align*}
 			\E\bbe_1'\bba_{1}\bba_{1}'\bbe_1M^{11}-\frac{c_n}{1-c_n-\alpha_n}=\E\frac{\bbe_1'\bba_{1}\bba_{1}'\bbe_1\xi_1^2}{\beta_1(\beta^{tr}_1)^2}=o(p^{-1/2}).
 		\end{align*}
 		Moreover, substituting
 	\eqref{M12} into the second term of \eqref{ea2}, we have three terms. The first one is $$\E\frac{\bbe_2'\bba_{1}\bba_{1}'\bbe_1\bbe_1'\bbQ\bbE_1\bbM_1^{-1}\bbu_1}{p\beta^{tr}_1}=\E\frac{\bbe_2'\bba_{1}\bba_{1}'\bbQ\bbE_1\bbM_1^{-1}\bbu_1}{p\beta^{tr}_1}=0,$$
 	because of 	$\bba_{1}'\bbQ=\bf0$. 	Applying the inversion formula to $\bbM_1^{-1}$ again, we obtain that
 		\begin{align*}
 			&\E\frac{\xi_1\bbe_1'\bbQ\bbE_1\bbM_1^{-1}\bbu_1\bba_{1}'\bbe_1\bbe_2'\bba_{1}}{p}\\
 =&\E\frac{\xi_1\bbe_1'\bbQ\bbe_2\bba_{1}'\bbe_1\bbe_2'\bba_{1}}{\beta_{1\cdot2}p}
 			 -\E\frac{\xi_1\bbe_1'\bbQ\bbE_{1\cdot2}\bbM_{1\cdot2}^{-1}\bbE_{1\cdot2}'\bbQ\bbe_2\bba_{1}'\bbe_1\bbe_2'\bba_{1}}{\beta_{1\cdot2}p^2} \\			 =&\E\frac{\xi_1\bbe_1'\bbQ\bbe_2\bba_{1}'\bbe_1\bbe_2'\bba_{1}}{\beta_{1\cdot2}p}.
 		\end{align*}
 		Rewrite $1/\beta_{1\cdot2}$ as
 		\begin{align*}
 			\frac1{\beta_{1\cdot2}}=\frac1{\beta^{tr}_{2}}-\frac{\xi_{1\cdot2}}{\beta_{1\cdot2}\beta^{tr}_{2}}
 		\end{align*}
 		and by the the fact that $\bbQ\bba_1=\bf0$, we have that
 		\begin{align*}
 	\E\frac{\xi_1\bbe_1'\bbQ\bbe_2\bba_{1}'\bbe_1\bbe_2'\bba_{1}}{\beta_{1\cdot2}}=-\E\frac{\xi_{1\cdot2}\xi_1\bbe_1'\bbQ\bbe_2\bba_{1}'\bbe_1\bbe_2'\bba_{1}}{\beta^{tr}_{2}\beta_{1\cdot2}}=o(p^{-1/2}).
 		\end{align*}
 		Therefore, by combining the above results, we conclude that
 		\begin{align*}
 			\cM_{2}=o(1),
 		\end{align*}
 		and we complete the proof of the theorem.

 \subsection{Proof of Theorem \ref{hattau}}
Note that $$\tr[(\bbY'\bbQ\bbY-(n-k)\bbI)\circ(\bbY'\bbQ\bbY-(n-k)\bbI)]=\sum_{i=1}^p(\bbe_i'\bbQ\bbe_i-(n-k))^2$$ and $$\E(\bbe_i'\bbQ\bbe_i-(n-k))^2=2(n-k)+\tau\tr(\bbQ\circ\bbQ).$$ Thus by the definition of $\hat\tau$ and $\{\bbe_i\}$ are i.i.d., we have $\E\hat\tau=\tau$. Next we will show that $$\E(\hat\tau-\tau)^2\to0.$$

 It follows from \eqref{addassum} and Lemma \ref{BSinq} that \begin{align*}
 &\E(\hat\tau-\tau)^2=\E(\hat\tau-\E\hat\tau)^2\\
 =&p^{-1}\E[(\bbe_1'\bbQ\bbe_1-(n-k))^2-\E(\bbe_1'\bbQ\bbe_1-(n-k))]^2/\tr^2(\bbQ\circ\bbQ)\\
 =&\{p^{-1}\E(\bbe_1'\bbQ\bbe_1-(n-k))^4-p^{-1}[\E(\bbe_1'\bbQ\bbe_1-(n-k))]^2\}/\tr^2(\bbQ\circ\bbQ)\\
 \leq&Kp^{-1}[((n-k)^2+n^2(n-k)\eta_n^4+(n-k)^2+\tau^2\tr^2(\bbQ\circ\bbQ)]/\tr^2(\bbQ\circ\bbQ),
\end{align*}
where $K$ is a positive constant. By $c_r$-inequality, we have that $\tr(\bbQ\circ\bbQ)\geq n^{-1}(n-k)^2$, which together with condition (C1) implies $\E(\hat\tau-\tau)^2\to0$. Then we complete the proof of this theorem.

 \subsection{Proof of Theorem \ref{clt2}}
 For simple presentation, in the following we assume $\{1\}\subset \bbj_*$ and let $j=1$. Then by the notation $\bbb=\bgS^{-1/2}\bm\Theta_*\bbX_*'\bba_{1}$, $\cK_1=p^{-1}(\bbb'+\bba_{1}'\bbE)\bbM^{-1}(\bbE'\bba_{1}+\bbb)$. Note that
 the proof procedure of Theorem \ref{clt2} is the same as that of Theorem \ref{clt1}. And the difference is that Theorem \ref{clt2} requires the consideration of linear combinations of three different forms of random variables, namely $\bba_{1}'\bbE\bbM^{-1}\bbE'\bba_{1}$, $\bba_{1}'\bbE\bbM^{-1}\bbb$ and $\bbb'\bbM^{-1}\bbb$. As the asymptotic normality of $\bba_{1}'\bbE\bbM^{-1}\bbE'\bba_{1}$ is proved in last subsection, in the sequel we only focus on the other two terms and their correlations.

Analogously, we split the proof of this theorem into two parts. It is worthy noting that next we may use the same notation as in the proof of Theorem \ref{clt1}, but they represent a little different content.
 First, we show the asymptotic normality of the sequence of random variables
\begin{align*}
\cM_{3}:=\sqrt{p}[p^{-1}(\bbb'+\bba_{1}'\bbE)\bbM^{-1}(\bbE'\bba_{1}+\bbb)-\E p^{-1}(\bbb'+\bba_{1}'\bbE)\bbM^{-1}(\bbE'\bba_{1}+\bbb)].
 \end{align*}
 Second, we prove the non-random sequence
 \begin{align*}
 \cM_{4}&=\sqrt{p}[\E p^{-1}(\bbb'+\bba_{1}'\bbE)\bbM^{-1}(\bbE'\bba_{1}+\bbb)-\frac{c_n(1+\delta_1)}{1-c_n-\alpha_n}]
\end{align*} tends to zero.
It follows that
\begin{align*}
 \cM_1=&p^{-1/2}\sum_{l=1}^p(\E_l-\E_{l-1})(\bbb'+\bba_{1}'\bbE)\bbM^{-1}(\bbE'\bba_{1}+\bbb)\\
 =&p^{-1/2}\sum_{l=1}^p(\E_l-\E_{l-1})[(\bbb'+\bba_{1}'\bbE)\bbM^{-1}(\bbE'\bba_{1}+\bbb)-(\bbb_{l}'+\bba_{1}'\bbE_l)\bbM_l^{-1}(\bbE_l'\bba_{1}+\bbb_{l})].
\end{align*}
By the inversion formula of block matrix, we obtain
\begin{align*}
 & (\bbb'+\bba_{1}'\bbE)\bbM^{-1}(\bbE'\bba_{1}+\bbb)
 -(\bbb_l'+\bba_{1}'\bbE_l)\bbM_l^{-1}(\bbE_l'\bba_{1}+\bbb_{l})\nonumber\\
 =&\frac{1}{\beta_lp^2} (\bbb_l'+\bba_{1}'\bbE_l)\bbM_l^{-1}\bbE_l'\bbQ\bbe_l\bbe_l'\bbQ\bbE_l\bbM_l^{-1}(\bbE_l'\bba_{1}+\bbb_{l})\nonumber\\
 &-\frac{2}{\beta_lp} (\bbb_l'+\bba_{1}'\bbE_l)\bbM_l^{-1}\bbE_l'\bbQ\bbe_l(\bbe_l'\bba_1+b_{l})
 +\frac{(\bbe_l'\bba_1+b_{l})^2}{\beta_l}.
\end{align*}
Then, by the equation \eqref{betr}, 
we can rewrite $ \cM_{3}$ as
\begin{align*}
 \cM_3=\frac{1}{p^{1/2}\beta^{tr}_1}\sum_{l=1}^p\E_l[{\bbe_l'\bmGa_l\bbe_l-\tr\bmGa_l}+2\bbe_l'\bm\gamma_l]+\cM_{30},
\end{align*}
where
\begin{align*}
 \bmGa_l=&p^{-2}\bbQ\bbE_l\bbM_l^{-1}(\bbb_l+\bbE_l'\bba_1)(\bbb_l+\bbE_l'\bba_1)'\bbM_l^{-1}\bbE_l'\bbQ\\
 &-p^{-1}\bba_1(\bbb_l+\bbE_l'\bba_1)'\bbM_l^{-1}\bbE_l'\bbQ
 -p^{-1}\bbQ\bbE_l\bbM_l^{-1}(\bbb_l+\bbE_l'\bba_1)\bba_1'+\bba_1\bba_1',
 \end{align*}
\begin{align*}
 \bmga_l=-p^{-1} b_{l}\bbQ\bbE_l\bbM_l^{-1}(\bbb_l+\bbE_l'\bba_{1})+\bba_1b_{l}
\end{align*}
and
\begin{align*}
 \cM_{30}=&-\sum_{l=1}^p(\E_l-\E_{l-1})\frac{\xi_l(\bbb_l'+\bba_{1}'\bbE_l)\bbM_l^{-1}\bbE_l'\bbQ\bbe_l\bbe_l'\bbQ\bbE_l\bbM_l^{-1}(\bbE_l'\bba_{1}+\bbb_{l})}{p^{5/2}\beta_l\beta^{tr}_1}\\
 &+2\sum_{l=1}^p(\E_l-\E_{l-1})\frac{\xi_l(\bbb_l'+\bba_{1}'\bbE_l)\bbM_l^{-1}\bbE_l'\bbQ\bbe_l(\bbe_l'\bba_1+b_{l})}{p^{3/2}\beta_l\beta^{tr}_1}\\
 &-\sum_{l=1}^p(\E_l-\E_{l-1})\frac{\xi_l(\bbe_l'\bba_1+b_{l})^2}{p^{1/2}\beta_l\beta^{tr}_1}\\
 :=&-\cM_{301}+2\cM_{302}-\cM_{303}.
\end{align*}
Next we will prove $\cM_{10}=o_p(1)$. Substitute \eqref{betr} into $\cM_{101}, ~\cM_{102}$ and $\cM_{103}$ respectively, we then have that
\begin{align*}
 \cM_{301} =&\sum_{l=1}^p(\E_l-\E_{l-1})\frac{\xi_l(\bbb_l'+\bba_{1}'\bbE_l)\bmPs_l(\bbE_l'\bba_{1}+\bbb_{l})}{p^{5/2}(\beta^{tr}_1)^2}+\sum_{l=1}^p(\E_l-\E_{l-1})\frac{\xi_l(\bbb_l'+\bba_{1}'\bbE_l)\bbM_l^{-1}(\bbE_l'\bba_{1}+\bbb_{l})}{p^{3/2}(\beta^{tr}_1)^2}\\
 &-\sum_{l=1}^p(\E_l-\E_{l-1})\frac{\xi_l^2(\bbb_l'+\bba_{1}'\bbE_l)\bbM_l^{-1}\bbE_l'\bbQ\bbe_l\bbe_l'\bbQ\bbE_l\bbM_l^{-1}(\bbE_l'\bba_{1}+\bbb_{l})}{p^{5/2}\beta_l(\beta^{tr}_1)^2},\\
 \cM_{302} =&\sum_{l=1}^p(\E_l-\E_{l-1})\frac{\xi_l(\bbb_l'+\bba_{1}'\bbE_l)\bbM_l^{-1}\bbE_l'\bbQ\bbe_l(\bbe_l'\bba_1+b_{l})}{p^{3/2}(\beta^{tr}_1)^2}\\
 &-\sum_{l=1}^p(\E_l-\E_{l-1})\frac{\xi_l^2(\bbb_l'+\bba_{1}'\bbE_l)\bbM_l^{-1}\bbE_l'\bbQ\bbe_l(\bbe_l'\bba_1+b_{l})}{p^{3/2}\beta_l(\beta^{tr}_1)^2},\\
 \cM_{303} =&\sum_{l=1}^p(\E_l-\E_{l-1})\frac{\xi_l(\bbe_l'\bba_1\bba_1'\bbe_l-1+2b_{l}\bba_1'\bbe_l+b_{l}^2+1)}{p^{1/2}(\beta^{tr}_1)^2} -\sum_{l=1}^p(\E_l-\E_{l-1})\frac{\xi_l^2(\bbe_l'\bba_1+b_{l})^2}{p^{1/2}\beta_l(\beta^{tr}_1)^2}.
\end{align*}
These together with \eqref{e1}, \eqref{Psl} and the BurkH\"{o}lder's inequality (see Lemma \ref{burkholder}) implies that $\cM_{10}=o_p(1)$. Note that here we used the fact $\bbQ\bba_1={\bf0}$.

Applying Lemma \ref{BSinq}, we have that
 \begin{align*}
 \E|\E_l(\bbe_l'\bmGa_l\bbe_l-\tr\bmGa_l)|^4\leq \E|\bbe_l'\bmGa_l\bbe_l-\tr\bmGa_l|^4=O(p\eta_n^4)
\end{align*}
and
\begin{align*}
 \E|\E_l(\bbe_l'\bm\gamma_l)|^4\leq \E|\bbe_l'\bm\gamma_l\bm\gamma_l^*\bbe_l|^2=O(p^{-2}b_{l}^4),
\end{align*}
which verify the condition (ii) in Lemma \ref{martingaleCLT}. Thus, what we need is to obtain the limit of
\begin{align*}
 \frac{1}{p(\beta^{tr}_1)^2}\sum_{l=1}^p\E_{l-1}\{\E_l[{\bbe_l'\bmGa_l\bbe_l-\tr\bmGa_l}+2\bbe_l'\bm\gamma_l]\}^2.
\end{align*}
By Lemma \ref{quadratic_equ}, we have that
\begin{align*}
 &\E_{l-1}\{\E_l[{\bbe_l'\bmGa_l\bbe_l-\tr\bmGa_l}+2\bbe_l'\bm\gamma_l]\}^2\\
 =&2\E_{l-1}\tr(\E_l\bmGa_l\E_l\bmGa_l) +4\E_{l-1}(\E_l\bmga_l\E_l\bmga_l')\\
 &+\tau\E_{l-1}\tr(\E_l\bmGa_l\circ\E_l\bmGa_l)+4\E e_{11}^3\E_{l-1}\tr(\E_l\bmGa_l\circ\E_l\bmga_l{\bf1}').
\end{align*}
Notice that
\begin{align}\label{trElGa21}
\tr(\E_l\bmGa_l\E_l\bmGa_l)=&p^{-4}\tr(\E_l\bbQ\bbE_l\bbM_l^{-1}(\bbb_l+\bbE_l'\bba_1)(\bbb_l+\bbE_l'\bba_1)'\bbM_l^{-1}\bbE_l'\bbQ)^2\\
&+p^{-2}2\E_l((\bbb_l+\bbE_l'\bba_1)'\bbM_l^{-1}\bbE_l'\bbQ)\E_l(\bbQ\bbE_l\bbM_l^{-1}(\bbb_l+\bbE_l'\bba_1))+1.\nonumber
\end{align}
In the proof of Theorem \ref{clt1}, we have shown that
\begin{align*}
 & p^{-4}\tr(\E_l\bbQ\bbE_l\bbM_l^{-1}\bbE_l'\bba_1\bba_1\bbE_l\bbM_l^{-1}\bbE_l'\bbQ)^2\\
&+p^{-2}2\E_l(\bba_1\bbE_l\bbM_l^{-1}\bbE_l'\bbQ)\E_l(\bbQ\bbE_l\bbM_l^{-1}\bbE_l'\bba_1)+1\\
=&\left(\frac{l-1}{n-k-l+1}+1\right)^2+o_p(1).
\end{align*}
By the same procedure and the assumptions in Theorem \ref{clt2}, we can also have that
\begin{align*} p^{-4}\tr(\E_l\bbQ\bbE_l\bbM_l^{-1}\bbb_l\bbb_l'\bbM_l^{-1}\bbE_l'\bbQ)^2=\left(\frac{\sum_{i=1}^{l-1}b_i^2}{n-k-l+1}\right)^2+o_p(1),
\end{align*}
\begin{align*} p^{-4}\tr(\E_l\bbQ\bbE_l\bbM_l^{-1}\bbb_l\bba_1 \bbE_l\bbM_l^{-1}\bbE_l'\bbQ)^2=\frac{(l-1)\sum_{i=1}^{l-1}b_i^2}{n-k-l+1}+o_p(1),
\end{align*}	
\begin{align*}
p^{-2}\E_l(\bbb_l\bbM_l^{-1}\bbE_l'\bbQ)\E_l(\bbQ\bbE_l\bbM_l^{-1}\bbE_l'\bba_1)=o_p(1),
\end{align*}
and
\begin{align*}
p^{-2}\E_l(\bbb_l \bbM_l^{-1}\bbE_l'\bbQ)\E_l(\bbQ\bbE_l\bbM_l^{-1}\bbb_l) =\frac{\sum_{i=1}^{l-1}b_i^2}{n-k-l+1}+o_p(1),
\end{align*}
which together with \eqref{trElGa21} and \eqref{p4Mlp} implies
\begin{align*}
\frac{1}{p} \sum_{l=1}^p \E_{l-1}\tr(\E_l\bmGa_l\E_l\bmGa_l)=\frac{1}{p}\sum_{l=1}^p\left(\frac{(l-1+\sum_{i=1}^{l-1}b_i^2)}{n-k-l+1}+1\right)^2+o_p(1).
\end{align*}

For $ \E_l\bmga_l\E_l\bmga_l'$, by the notation
$\widetilde\bbM_l=\frac1p\widetilde\bbE_l'\bbQ\widetilde\bbE_l$, we have that
\begin{align*}
 &\E_l\bmga_l'\E_l\bmga_l\\
 =&\E_l[(p^{-1} b_{l}\bbQ\bbE_l\bbM_l^{-1}(\bbb_l+\bbE_l'\bba_{1})-\bba_1b_{l})'((p^{-1} b_{l}\bbQ\bbE_l\bbM_l^{-1}(\bbb_l+\bbE_l'\bba_{1})-\bba_1b_{l})]\\
 =&\E_l[p^{-2}b_{l}^2(\bbb_l+\bba_{1}'\bbE_l)\bbM_l^{-1}\bbE_l'\bbQ\widetilde\bbE_l\widetilde\bbM_l^{-1}(\bbb_l+\widetilde\bbE_l'\bba_{1})+b_l^2].
\end{align*}
Applying the inversion formula of block matrix \eqref{beamel} again, we obtain that
\begin{align*}
p^{-1} \sum_{l=1}^p\E_{l-1}(\E_l\bmga_l'\E_l\bmga_l)=p^{-1}\sum_{l=1}^pb_l^2\left(\frac{(l-1+\sum_{i=1}^{l-1}b_i^2)}{n-k-l+1}+1\right)+o_p(1).
\end{align*}
Then by applying Lemma \ref{BPlmt}, we have that as $n\to\infty$,
\begin{align*}
 &\frac{1}{p}\sum_{l=1}^p\E_{l-1}(\tr(\E_l\bmGa_l\E_l\bmGa_l) +2\E_l\bmga_l\E_l\bmga_l')\\
 =&\frac{1}{p}\sum_{l=1}^p
 \left(\left(\frac{l-1+\sum_{i=1}^{l-1}b_i^2}{n-k-l+1}+1\right)^2+2b_l^2\left(\frac{l-1+\sum_{i=1}^{l-1}b_i^2}{n-k-l+1}+1\right)\right)+o_p(1)\\
 =&\frac{n}{p}\int_0^{c_n}\left[\left(\frac{t(1+\delta_1)}{1-\alpha_n-t}+1\right)^2+2\delta_1\left(\frac{t(1+\delta_1)}{1-\alpha_n-t}+1\right)\right]dt+o_p(1)\\
 =&\frac{(1-\alpha_n)(1+2\delta_1)+c_n\delta_1^2}{1-\alpha_n-c_n}+o_p(1).
\end{align*}

 We now turn to the term $\E_{l-1}\tr(\E_l\bmGa_l\circ\E_l\bmGa_l)$.
By the notation that $\bbu_j$ is an $n$-dimensional column vector with the j-th element being 1 and 0 otherwise and repeating the same argument in the proof of Theorem \ref{clt1}, we can obtain that
\begin{align*}
 \E_{l-1}\tr(\E_l\bmGa_l\circ\E_l\bmGa_l)=\sum_{j=1}^n(\E\bbu_j'\bmGa_l\bbu_j)^2+o_p(1).
\end{align*}
As $\{e_{ij}\}$ are i.i.d., thus from the assumptions of this theorem, we have that
 \begin{align*}
& \sum_{j=1}^n(\E\bbu_j'\bmGa_l\bbu_j)^2\\
=&\sum_{j=1}^n( n^{-1}p^{-1}\E(\bbb_l+\bbE_l'\bba_1)'\bbM_l^{-1}(\bbb_l+\bbE_l'\bba_1)\\
 &-2p^{-1}\bbu_j'\bba_1\E(\bbb_l+\bbE_l'\bba_1)'\bbM_l^{-1}\bbE_l'\bbQ\bbu_j+
 (\bbu_j'\bba_1)^2]^2\\
 =&o(1).
 \end{align*} Then we conclude that
\begin{align*}
\frac{1}{p}\sum_{l=1}^p\E_{l-1}\tr(\E_l\bmGa_l\circ\E_l\bmGa_l)=o_p(1).
\end{align*}

Next, we will prove that the non-random sequence
 \begin{align*}
 \cM_{4}=o(1).
\end{align*}
By the notation $\bbM^{-1}=(M^{ij})$ and $\{e_{ij}\}$ are i.i.d., we have that
\begin{align*}
 &\E(\bbb'+\bba_{1}'\bbE)\bbM^{-1}(\bbE'\bba_{1}+\bbb)=\sum_{i,j=1}^p\E(b_i+\bba_{1}'\bbe_i)M^{ij}(b_j+\bba_{1}'\bbe_j)\\
 =&\bbb'\bbb\E M^{11}+p\E\bbe_1'\bba_{1}\bba_{1}'\bbe_1M^{11}+2\E\bba_{1}'\bbe_1M^{11}\sum_{i=1}^pb_i\\
 &+\sum_{i\neq j}^pb_ib_j\E M^{12}+p(p-1)\E\bba_{1}'\bbe_1M^{12}\bbe_2'\bba_{1}+{2(p-1)}\E\bba_{1}'\bbe_1M^{12}\sum_{i=1}^pb_i.
 \end{align*}
From the inverse matrix formula, we know that
\begin{align*}
 M^{11}=\frac1{\beta_1}
	=\frac1{\beta^{tr}_1}-\frac{\xi_1}{(\beta^{tr}_1
)^2}+\frac{\xi_1^2}{\beta_1(\beta^{tr}_1)^2}.
\end{align*}
and
\begin{align*}
 M^{12}
 =\frac{\bbe_1'\bbQ\bbE_1\bbM_1^{-1}\bbu_1}{p\beta^{tr}_1}-\frac{\xi_1\bbe_1'\bbQ\bbE_1\bbM_1^{-1}\bbu_1}{p(\beta^{tr}_1
)^2}+\frac{\xi_1^2\bbe_1'\bbQ\bbE_1\bbM_1^{-1}\bbu_1}{p\beta_1(\beta^{tr}_1)^2}.
\end{align*}

Then it follows from \eqref{betr}, \eqref{Exil} and the H\"{o}lder's inequality that
\begin{align*}
 p^{-1}\bbb'\bbb\E M^{11}-\frac{c_n\delta_1}{1-c_n-\alpha_n}=o(p^{-1/2})
\end{align*}
and
\begin{align*}
 \E\bbe_1'\bba_{1}\bba_{1}'\bbe_1M^{11}-\frac{c_n}{1-c_n-\alpha_n}=o(p^{-1/2}).
\end{align*}
By the facts that
\begin{align*}
&|\E\bba_1'\bbe_1\xi_1|\leq \sqrt{\E|\xi_1|^2}=O(p^{-1/2})
\end{align*}
and
\begin{align*}
 |\sum_{i=1}^pb_i|=O(p^{1/2}),
\end{align*}
we can obtain that
\begin{align*}
 \E\bba_{1}'\bbe_1M^{11}\sum_{i=1}^pb_i=O(1).
\end{align*}
It follows from
\begin{align*}
 |p^{-1}\sum_{i\neq j}^pb_ib_j|=O(1),~~|\E{\xi_1\bbe_1'\bbQ\bbE_1\bbM_1^{-1}\bbu_1}|=O(1)
\end{align*}
and the H\"{o}lder's inequality, we have that
\begin{align*}
 \sum_{i\neq j}^pb_ib_j\E M^{12}=o(p^{-1/2}).
\end{align*}
Therefore, similar to the proof of Theorem \ref{clt1}, we conclude that
 \begin{align*}
 \cM_{2}=o(1),
\end{align*}
and we complete the proof of this theorem.

%

%
%

\subsection{Some useful lemmas}
\begin{lemma}[Theorem 35.12 of \cite{Billingsley95P}]\label{martingaleCLT}
Suppose that for each $n$, $Y_{n 1}, Y_{n 2}, \ldots, Y_{n r_{n}}$ is a real martingale
difference sequence with respect to the increasing $\sigma$ -field $\left\{\mathcal{F}_{n j}\right\}$ having second moments. If as $n \rightarrow \infty,$
for each $\varepsilon>0$,

(i) $\quad \sum_{j=1}^{r_{n}} \E\left(Y_{n j}^{2} | \mathcal{F}_{n, j-1}\right) \stackrel{p }{\rightarrow} \sigma^{2}$,
where $\sigma^{2}$ is a positive constant;

(ii)
$\quad \sum_{j=1}^{r_{n}} \E\left(Y_{n j}^{2} I_{\left(\left|Y_{n j}\right| \geq \varepsilon\right)}\right) \rightarrow 0$,

then we have that
\[
\sum_{j=1}^{r_{n}} Y_{n j} \stackrel{\mathcal{D}}{\rightarrow} N\left(0, \sigma^{2}\right).
\]
\end{lemma}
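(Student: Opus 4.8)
In the paper this lemma is quoted verbatim as Theorem~35.12 of \cite{Billingsley95P}, so strictly speaking no proof is needed; what follows is the plan one would carry out to prove the martingale central limit theorem from first principles. The method is convergence of characteristic functions. Fix a real $t$, put $T_{nj}=\sum_{i=1}^{j}Y_{ni}$ with $T_{n0}=0$, $\sigma_{nj}^{2}=\E(Y_{nj}^{2}\mid\mathcal{F}_{n,j-1})$, and $V_{n}=\sum_{j=1}^{r_{n}}\sigma_{nj}^{2}$, so that hypothesis (i) reads $V_{n}\pto\sigma^{2}$. The goal is $\E\,e^{\mathrm{i}tT_{n r_{n}}}\to e^{-\sigma^{2}t^{2}/2}$; once this holds for every $t$, L\'evy's continuity theorem gives $\sum_{j}Y_{nj}\dto N(0,\sigma^{2})$.

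First I would pass to a setting where $V_{n}$ is bounded, by a stopping-time truncation: with $\nu_{n}=\min\{\,j:\sum_{i\le j}\sigma_{ni}^{2}>\sigma^{2}+1\,\}$ ($=r_{n}$ if no such $j$ exists), replace $Y_{nj}$ by $Y_{nj}\mathbbm{1}\{j\le\nu_{n}\}$. Hypothesis (i) forces $\P(\nu_{n}=r_{n})\to1$, so this modification changes $T_{n r_{n}}$ with probability tending to $0$ and does not affect the limit; on $\{\nu_{n}=r_{n}\}$ one has $V_{n}\le\sigma^{2}+1+\max_{j}\sigma_{nj}^{2}$, and the maximal conditional variance is asymptotically negligible because (ii), via Markov's inequality applied to $\sum_{j}\E\big(Y_{nj}^{2}\mathbbm{1}\{|Y_{nj}|\ge\varepsilon\}\mid\mathcal{F}_{n,j-1}\big)$, implies that the conditional Lindeberg condition holds in probability, whence $\max_{j}\sigma_{nj}^{2}\pto0$. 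A further routine truncation of the summands, harmless under (ii), makes $V_{n}$ itself bounded.

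Second, with $V_{n}$ bounded and $V_{n}\pto\sigma^{2}$, it suffices to show $\E\big[\exp\!\big(\mathrm{i}tT_{n r_{n}}+\tfrac12 t^{2}V_{n}\big)\big]\to1$, since then the elementary identity $e^{\mathrm{i}tT_{n r_{n}}}=e^{\mathrm{i}tT_{n r_{n}}+\tfrac12 t^{2}V_{n}}\cdot e^{-\tfrac12 t^{2}V_{n}}$ combined with $e^{-\tfrac12 t^{2}V_{n}}\pto e^{-\tfrac12 t^{2}\sigma^{2}}$ and the boundedness just arranged yields $\E\,e^{\mathrm{i}tT_{n r_{n}}}\to e^{-\tfrac12\sigma^{2}t^{2}}$. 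Writing the integrand as the telescoping product $\prod_{j=1}^{r_{n}}e^{\mathrm{i}tY_{nj}+\tfrac12 t^{2}\sigma_{nj}^{2}}$ and peeling off factors from the last index inward, the martingale identity $\E(Y_{nj}\mid\mathcal{F}_{n,j-1})=0$ together with the estimate $|e^{\mathrm{i}x}-1-\mathrm{i}x+\tfrac12 x^{2}|\le\min(x^{2},|x|^{3})$ reduces matters to proving $\sum_{j}\E\big|\E\big(e^{\mathrm{i}tY_{nj}}-1-\mathrm{i}tY_{nj}+\tfrac12 t^{2}Y_{nj}^{2}\,\big|\,\mathcal{F}_{n,j-1}\big)\big|\to0$; one bounds the $j$-th summand by $t^{2}\,\E\big[Y_{nj}^{2}\mathbbm{1}\{|Y_{nj}|\ge\varepsilon\}\big]+\varepsilon|t|^{3}\,\E\big[Y_{nj}^{2}\mathbbm{1}\{|Y_{nj}|<\varepsilon\}\big]$, applies (ii) to the first piece and $\sum_{j}\E Y_{nj}^{2}=\E V_{n}\to\sigma^{2}$ to the second, and then lets $\varepsilon\downarrow0$.

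The main obstacle is the telescoping-product estimate in the second step: one must control, simultaneously over all $r_{n}$ factors, both the deviation of each conditional factor from $1$ and the random modulus of the accumulated partial product, and this is precisely where one needs the conditional variances to be uniformly bounded (from the truncation) and individually negligible (from (ii)); the argument is the classical one but the bookkeeping is where care is required. Assembling the three steps gives $\E\,e^{\mathrm{i}tT_{n r_{n}}}\to e^{-\sigma^{2}t^{2}/2}$ for every real $t$, and hence the stated weak convergence.
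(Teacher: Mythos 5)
The paper gives no proof of this lemma at all --- it is imported verbatim as Theorem~35.12 of Billingsley, so there is nothing to compare against. Your sketch is a correct outline of the standard characteristic-function argument (stopping-time truncation of $V_n$, negligibility of $\max_j\sigma_{nj}^2$ from the Lindeberg condition, and the telescoping estimate showing $\E\exp(\mathrm{i}tT_{nr_n}+\tfrac12 t^2V_n)\to1$), which is essentially Billingsley's own proof.
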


\begin{lemma}[\cite{Burkholder71D}]\label{burkholder}
	Let $\left\{Y_{k}\right\}$ be a martingale difference sequence with respect to the increasing $\sigma$-field $\left\{\mathcal{F}_{k}\right\}.$ Then, for $\ell>1$,
\[
{\E}\left|\sum X_{k}\right|^{\ell} \leq K_{\ell} \left(\E\left(\sum\E(\left|X_{k}\right|^{2}|\mathcal{F}_{k-1})\right)^{\ell / 2}+\sum\E\left|X_{k}\right|^{\ell}\right).
\]

\end{lemma}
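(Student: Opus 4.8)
The plan is to prove this classical moment inequality for martingale difference sums (the Burkholder--Rosenthal inequality) by splitting into the ranges $1<\ell\le 2$ and $\ell>2$; here $\{Y_k\}$ and $\{X_k\}$ denote the same martingale difference sequence, and it suffices to treat a finite sum $S_n=\sum_{k=1}^n X_k$, an infinite sum being recovered by monotone convergence. For $1<\ell\le 2$ the first term on the right is not even needed: summing the elementary pointwise bound $|a+b|^\ell\le|a|^\ell+\ell\operatorname{sgn}(a)|a|^{\ell-1}b+C_\ell|b|^\ell$ (valid for $1<\ell\le 2$) along $a=S_{k-1}$, $b=X_k$, taking $\E(\cdot\mid\mathcal F_{k-1})$ to kill the linear term and then taking expectations, telescopes to the martingale von Bahr--Esseen inequality $\E|S_n|^\ell\le C_\ell\sum_k\E|X_k|^\ell$, which already dominates the claimed right-hand side.

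The substance is the range $\ell>2$. Write $M:=\E|S_n|^\ell$ (finite for a finite sum, since $|S_n|^\ell\le n^{\ell-1}\sum_k|X_k|^\ell$), $S_n^\ast:=\max_{k\le n}|S_k|$, and $\langle S\rangle_n:=\sum_{k=1}^n\E(X_k^2\mid\mathcal F_{k-1})$. The second-order pointwise bound $|a+b|^\ell\le|a|^\ell+\ell\operatorname{sgn}(a)|a|^{\ell-1}b+C_\ell\bigl(|a|^{\ell-2}b^2+|b|^\ell\bigr)$ holds for $\ell\ge 2$; applying it with $a=S_{k-1}$, $b=X_k$, taking conditional expectations (the linear term vanishes), then expectations and summing over $k$ yields
\[
M\le C_\ell\Bigl(\sum_{k=1}^n\E\bigl(|S_{k-1}|^{\ell-2}\,\E(X_k^2\mid\mathcal F_{k-1})\bigr)+\sum_{k=1}^n\E|X_k|^\ell\Bigr).
\]
In the first sum I would bound $|S_{k-1}|^{\ell-2}$ by $(S_n^\ast)^{\ell-2}$ and pull it out, leaving $\E\bigl((S_n^\ast)^{\ell-2}\langle S\rangle_n\bigr)$. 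Then H\"older's inequality with exponents $\ell/(\ell-2)$ and $\ell/2$, followed by Doob's $L^\ell$ maximal inequality $\E(S_n^\ast)^\ell\le(\ell/(\ell-1))^\ell M$ for the nonnegative submartingale $(|S_k|)$, bounds this by $C_\ell\,M^{(\ell-2)/\ell}\bigl(\E\langle S\rangle_n^{\ell/2}\bigr)^{2/\ell}$. A weighted Young inequality gives $M^{(\ell-2)/\ell}\bigl(\E\langle S\rangle_n^{\ell/2}\bigr)^{2/\ell}\le\varepsilon M+C_{\ell,\varepsilon}\,\E\langle S\rangle_n^{\ell/2}$ for every $\varepsilon>0$; choosing $\varepsilon$ so small that the resulting coefficient of $M$ on the right is at most $\tfrac12$, and absorbing that term into the left-hand side (legitimate because $M<\infty$), produces $M\le K_\ell\bigl(\E\langle S\rangle_n^{\ell/2}+\sum_k\E|X_k|^\ell\bigr)$, which is the assertion since $\langle S\rangle_n=\sum_k\E(X_k^2\mid\mathcal F_{k-1})$ is exactly the first term in the statement.

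The one genuinely delicate point is this self-improving absorption: it needs $M$ to be known finite beforehand, so for an infinite martingale one must first truncate (for instance by stopping the process when $\langle S\rangle$ first crosses a level $L$, or by replacing each $X_k$ with $X_k\,\mathbbm{1}\{|X_k|\le N\}$), prove the bound with a constant independent of the truncation, and then pass to the limit; one must also keep track of the various constants (from the two pointwise inequalities, from Doob, and from Young) to confirm that the final $K_\ell$ depends only on $\ell$. Every remaining ingredient --- the two elementary Taylor-type inequalities, Doob's maximal inequality, and H\"older/Young --- is routine, and for a finished write-up one may simply invoke \cite{Burkholder71D}.
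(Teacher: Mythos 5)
The paper does not actually prove this lemma; it is quoted (with the harmless $Y_k$/$X_k$ notational mismatch you correctly point out) from \cite{Burkholder71D}, so there is no internal proof to compare against. Your argument is the standard direct proof of the martingale Rosenthal inequality and is correct: the von Bahr--Esseen telescoping disposes of $1<\ell\le 2$, and for $\ell>2$ the second-order Taylor bound, Doob's $L^{\ell}$ maximal inequality for the nonnegative submartingale $|S_k|$, H\"older with the conjugate exponents $\ell/(\ell-2)$ and $\ell/2$, and the weighted Young absorption give the claim; your remark that the absorption step needs $M=\E|S_n|^{\ell}<\infty$ (automatic for a finite sum with $X_k\in L^{\ell}$, and recovered by truncation otherwise, the inequality being vacuous if some $\E|X_k|^{\ell}=\infty$) is exactly the right caveat. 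The only real difference from the cited source is methodological: Burkholder's own route passes through square-function (Burkholder--Davis--Gundy type) inequalities, first bounding $\E|S_n|^{\ell}$ by $\E\bigl(\sum_k X_k^2\bigr)^{\ell/2}$ and then comparing the square function with its compensator $\langle S\rangle_n$, whereas your Taylor-expansion argument is more elementary and self-contained, at the cost of a worse constant, and it yields precisely the form used in the paper (with $\sum_k\E|X_k|^{\ell}$ rather than the sharper $\E\max_k|X_k|^{\ell}$ in the second term). I see no gaps.
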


\begin{lemma}[Lemma 2.7 of \cite{BaiS98N}]\label{BSinq}
	 For $\bbe=(e_{1}, \ldots,e_{n})'$ i.i.d. standardized entries, $\bbA$ a $n\times n$ matrix, we have, for any $\ell \geq 2$
\[
{\E}\left|\bbe' \bbA \bbe-{\tr} \bbA\right|^{\ell} \leq K_{\ell}\left(\left({\E}\left|e_{1}\right|^{4} {\tr} \bbA\bbA'\right)^{\ell / 2}+{\E}\left|e_{1}\right|^{2 \ell} {\tr} (\bbA\bbA')^{\ell / 2}\right).
\]

\end{lemma}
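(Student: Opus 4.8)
The plan is to prove this classical moment bound (Lemma~2.7 of \citep{BaiS98N}) by induction on $\ell$. Write $\mu_p=\E|e_1|^p$; the inequality is trivial unless $\mu_{2\ell}<\infty$. Since replacing $\bbA$ by $\tfrac12(\bbA+\bbA')$ leaves $\bbe'\bbA\bbe$ and $\tr\bbA$ unchanged while, by the triangle inequality for the Frobenius and Schatten-$\ell$ norms, it does not increase $\tr\bbA\bbA'$ or $\tr(\bbA\bbA')^{\ell/2}$, I would first assume $\bbA$ symmetric and then split $\bbe'\bbA\bbe-\tr\bbA=D+O$ with $D=\sum_i a_{ii}(e_i^2-1)$ (diagonal part) and $O=\sum_{i\ne j}a_{ij}e_ie_j$ (off-diagonal part), so that by the $c_r$-inequality it suffices to bound $\E|D|^\ell$ and $\E|O|^\ell$ by the right-hand side. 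The base case $\ell=2$ is a direct second-moment computation: $\E|\bbe'\bbA\bbe-\tr\bbA|^2=(\mu_4-3)\sum_i a_{ii}^2+2\tr\bbA\bbA'\le 2\mu_4\tr\bbA\bbA'$ (using $\sum_i a_{ii}^2\le\tr\bbA\bbA'$ and $\mu_4\ge1$), which is the claim since $\tr(\bbA\bbA')^{\ell/2}=\tr\bbA\bbA'$ when $\ell=2$.

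For the diagonal part (with $\ell>2$ and the lemma assumed for all exponents in $[2,\ell)$), $D$ is a sum of independent mean-zero summands, hence a martingale difference sum, so Lemma~\ref{burkholder} gives $\E|D|^\ell\le K_\ell\bigl[(\mathrm{Var}(e_1^2)\sum_i a_{ii}^2)^{\ell/2}+\E|e_1^2-1|^\ell\sum_i|a_{ii}|^\ell\bigr]$. I would then use $\mathrm{Var}(e_1^2)\le\mu_4$, $\E|e_1^2-1|^\ell\le 2^\ell\mu_{2\ell}$, $\sum_i a_{ii}^2\le\tr\bbA\bbA'$, and---since $a_{ii}^2\le(\bbA\bbA')_{ii}$, the diagonal of the positive semidefinite matrix $\bbA\bbA'$ is majorized by its spectrum (Schur--Horn), and $t\mapsto t^{\ell/2}$ is convex---the bound $\sum_i|a_{ii}|^\ell\le\sum_i(\bbA\bbA')_{ii}^{\ell/2}\le\tr(\bbA\bbA')^{\ell/2}$, which together give the required estimate for $\E|D|^\ell$.

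For the off-diagonal part I would write $O=\sum_j Z_j$ with $Z_j=2e_j(\bbL\bbe)_j$, where $\bbL$ is the strictly lower-triangular part of $\bbA$; this is a martingale difference sequence for $\mathcal{F}_j=\sigma(e_1,\dots,e_j)$, and Lemma~\ref{burkholder} gives $\E|O|^\ell\le K_\ell\bigl[\E(4\,\bbe'\bbL'\bbL\bbe)^{\ell/2}+\sum_j\E|Z_j|^\ell\bigr]$. The term $\sum_j\E|Z_j|^\ell=2^\ell\mu_\ell\sum_j\E|(\bbL\bbe)_j|^\ell$ is handled by applying Lemma~\ref{burkholder} once more to each independent sum $(\bbL\bbe)_j=\sum_{i<j}a_{ij}e_i$ and summing over $j$, using $\sum_{i<j}a_{ij}^2\le(\bbA\bbA')_{jj}$ with Schur--Horn again, $\sum_{ij}|a_{ij}|^\ell\le\tr(\bbA\bbA')^{\ell/2}$ (contraction of the identity map from the Schatten-$\ell$ class to the entrywise $\ell_\ell$-norm, valid for $\ell\ge2$), and $\mu_\ell\le\mu_{2\ell}^{1/2}\le\mu_{2\ell}$. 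For $\E(\bbe'\bbL'\bbL\bbe)^{\ell/2}$ I would split off the mean by the $c_r$-inequality, bound $(\tr\bbL'\bbL)^{\ell/2}\le(\tr\bbA\bbA')^{\ell/2}\le(\mu_4\tr\bbA\bbA')^{\ell/2}$, and estimate the centered piece $\E|\bbe'\bbL'\bbL\bbe-\tr\bbL'\bbL|^{\ell/2}$ by the induction hypothesis applied to the positive semidefinite matrix $\bbL'\bbL$ at exponent $\ell/2<\ell$ (passing first to exponent $2$ by Lyapunov and using the base case when $2<\ell<4$), together with $\tr(\bbL'\bbL)^2\le\|\bbA\|^2\tr\bbA\bbA'\le(\tr\bbA\bbA')^2$ and the dimension-free boundedness of triangular truncation on the Schatten-$\ell$ class (so that $\tr(\bbL'\bbL)^{\ell/2}\le c_\ell\tr(\bbA\bbA')^{\ell/2}$).

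Collecting the diagonal and off-diagonal bounds and absorbing all $\ell$-dependent constants into $K_\ell$ closes the induction. I expect the main obstacle to be precisely this last bookkeeping: one must ensure that the nested quadratic-form recursion reproduces the Schatten-type quantity $\tr(\bbA\bbA')^{\ell/2}$ and not the strictly larger $(\tr\bbA\bbA')^{\ell/2}$, and the two facts that make this go through cleanly---the Schur--Horn majorization and the boundedness of the triangular truncation operator on the Schatten-$\ell$ class for $1<\ell<\infty$---are the delicate ingredients that need to be invoked with care.
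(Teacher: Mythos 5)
This lemma is not proved in the paper at all: it is quoted verbatim from Lemma~2.7 of \cite{BaiS98N} and used as a black box, so there is no in-paper argument to compare against. Your reconstruction follows the standard route of that reference --- symmetrize, split $\bbe'\bbA\bbe-\tr\bbA$ into the diagonal martingale $\sum_i a_{ii}(e_i^2-1)$ and the off-diagonal martingale $\sum_j 2e_j(\bbL\bbe)_j$, apply Burkholder to each, and close the recursion on the conditional-variance quadratic form $\bbe'\bbL'\bbL\bbe$ by a descent on the exponent --- and it is sound. You correctly identify the two genuinely delicate points: (i) converting diagonal and entrywise sums into $\tr(\bbA\bbA')^{\ell/2}$ via Schur majorization and the $S_\ell\to\ell_\ell$ contraction, and (ii) controlling $\tr(\bbL'\bbL)^{\ell/2}$ by $\tr(\bbA\bbA')^{\ell/2}$. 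Point (ii) is not cosmetic: the crude bound $\tr(\bbL'\bbL)^{\ell/2}\le(\tr\bbL'\bbL)^{\ell/2}\le(\tr\bbA\bbA')^{\ell/2}$ would leave a term $\E|e_1|^{\ell}\,(\tr\bbA\bbA')^{\ell/2}$, which is \emph{not} dominated by the stated right-hand side (take $\bbA$ a normalized Hadamard matrix, so $\tr(\bbA\bbA')^{\ell/2}=n\ll n^{\ell/2}=(\tr\bbA\bbA')^{\ell/2}$, and a heavy-tailed $e_1$ with $\E|e_1|^4$ bounded but $\E|e_1|^{\ell}$ large); so the Macaev/Gohberg--Krein boundedness of triangular truncation on the Schatten-$\ell$ class is doing real work in your architecture, and it is a correct, dimension-free theorem for $\ell\in(1,\infty)$. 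Two small repairs: the intermediate inequality $\tr(\bbL'\bbL)^2\le\|\bbA\|^2\tr(\bbA\bbA')$ is not valid as written, since triangular truncation is not an operator-norm contraction --- but the inequality you actually need, $\tr(\bbL'\bbL)^2\le(\tr\bbL'\bbL)^2\le(\tr\bbA\bbA')^2$, is immediate because $\bbL'\bbL$ is positive semidefinite, so nothing breaks; and for non-integer $\ell$ the ``induction on $\ell$'' should be phrased as a finite descent $\ell\to\ell/2\to\cdots$ terminating in $[2,4)$, which is how you in fact use it. With these readings the argument is complete and correct.
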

\begin{lemma}[Lemma 7.2 of \cite{BaiY05C}]\label{BYinq}
 Let $\mathbf{e}=\left(e_{1}, \ldots, e_{n}\right)'$ be a random $n$-vector with i.i.d. standardized entries. Suppose $E\left|e_{i}\right|^{4}<\infty$ and $\left|e_{i}\right| \leq \eta_{n} \sqrt{n}$ with $\eta_{n} \rightarrow 0$
slowly. Assume that $\bbA$ is a symmetric matrix of order $n$ bounded in norm by $M$. Then, for any given $2 \leqslant \ell\leqslant b \log \left(n \eta_{n}^{2}\right)$ with some $b>1,$ there exists a constant $K$ such that
$$
\mathbb{E}\left|\bbe' \bbA \bbe-\operatorname{tr}(\bbA)\right|^{\ell} \leq n^{\ell}\left(n \eta_{n}^{4}\right)^{-1}\left(M K \eta_{n}^{2}\right)^{\ell}.
$$
\end{lemma}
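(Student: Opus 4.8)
The plan is to derive the bound by combining the general moment inequality for quadratic forms with the two effects of the deterministic truncation $|e_{i}|\le\eta_{n}\sqrt n$: it makes every moment of order $\le 4$ a bounded absolute constant, and it forces moments of order $\ge 5$ to grow only like powers of $\eta_n\sqrt n$. Since Lemma~\ref{BSinq} is already available, I would first apply it with $\ell$ in place of $p$,
$$
\E\bigl|\bbe'\bbA\bbe-\tr\bbA\bigr|^{\ell}\le K_{\ell}\Bigl(\bigl(\E|e_{1}|^{4}\,\tr(\bbA\bbA')\bigr)^{\ell/2}+\E|e_{1}|^{2\ell}\,\tr\bigl((\bbA\bbA')^{\ell/2}\bigr)\Bigr),
$$
and then use the spectral bound $\|\bbA\|\le M$. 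As $\bbA$ is symmetric with eigenvalues in $[-M,M]$, we get $\tr(\bbA\bbA')=\tr\bbA^{2}=\sum_{i}\lambda_{i}^{2}\le nM^{2}$ and $\tr\bigl((\bbA\bbA')^{\ell/2}\bigr)=\sum_{i}|\lambda_{i}|^{\ell}\le nM^{\ell}$, so both trace factors cost only one power of $n$ and the appropriate power of $M$.

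Next I would feed in the truncation. The fourth moment $\E|e_{1}|^{4}$ is a finite absolute constant, while for $\ell\ge2$
$$
\E|e_{1}|^{2\ell}=\E\bigl(|e_{1}|^{4}\,|e_{1}|^{2\ell-4}\bigr)\le(\eta_{n}\sqrt n)^{2\ell-4}\,\E|e_{1}|^{4}=O\bigl(n^{\ell-2}\eta_{n}^{2\ell-4}\bigr).
$$
Substituting, $\E|\bbe'\bbA\bbe-\tr\bbA|^{\ell}\le K_{\ell}M^{\ell}\bigl(C_{1}n^{\ell/2}+C_{2}n^{\ell-1}\eta_{n}^{2\ell-4}\bigr)$. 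Because $\eta_n\to0$ slowly, so that $\sqrt n\,\eta_{n}^{2}\to\infty$, we have $n^{\ell/2}\le(\sqrt n\,\eta_{n}^{2})^{\ell-2}n^{\ell/2}=n^{\ell-1}\eta_{n}^{2\ell-4}$ for all $\ell\ge2$ and $n$ large, hence the first term is absorbed into the second. Finally the identity $n^{\ell-1}M^{\ell}\eta_{n}^{2\ell-4}=n^{\ell}(n\eta_{n}^{4})^{-1}(M\eta_{n}^{2})^{\ell}$ puts the estimate in exactly the form of the lemma, provided the leftover numerical factors (the constant from $\E|e_{1}|^{4}$, the factor from the domination step, and $K_{\ell}$) are collected into $K$. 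This collection uses the restriction $\ell\le b\log(n\eta_{n}^{2})$: it is precisely there to ensure that the $\ell$-dependent Burkholder/Rosenthal constant $K_{\ell}$ (of order $\ell^{O(\ell)}$) together with the multiplicities generated inside the proof of Lemma~\ref{BSinq} stay controlled in the range of interest, so that the final constant can be taken to depend only on $b$ and on $\E|e_{1}|^{4}$.

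The step I expect to be the main obstacle is exactly this bookkeeping --- making rigorous, uniformly over the admissible range, that the $\ell$-dependent constants and combinatorial counts do not spoil the stated order. If one prefers a self-contained argument that makes this explicit, the alternative I would pursue is the direct graph-counting moment method of \cite{BaiY05C} (see also the truncation arguments in \cite{BaiS04C}): for even $\ell=2m$ one expands $\E(\bbe'\bbA\bbe-\tr\bbA)^{2m}$ as a sum over $2m$-tuples of index pairs, groups the tuples by the multigraph recording coincidences among indices, bounds a product of $r$ consecutive $\bbA$-entries summed over free indices by $M^{r-1}$ (and by $nM^{r-1}$ over a closed loop, using $\|\bbA\|\le M$), and bounds the weight of a vertex visited $r$ times by $\E|e_{1}|^{r}$, which is an absolute constant for $r\le4$ and at most $(\eta_{n}\sqrt n)^{r-4}\E|e_{1}|^{4}$ for $r\ge5$; summing over the finitely many graph shapes reproduces the order $n^{2m-1}M^{2m}\eta_{n}^{4m-4}$, and the odd-$\ell$ case follows from Lyapunov's inequality. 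Either route yields the asserted inequality.
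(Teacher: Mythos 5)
The paper does not actually prove this statement: it is quoted verbatim as Lemma 7.2 of Bai and Yao (2005) and used as an imported ingredient, so there is no internal proof to compare against. Judged on its own terms, your proposal correctly identifies the structural inputs --- a Rosenthal/Burkholder-type quadratic-form inequality combined with the truncation bounds $\E|e_1|^4=O(1)$, $\E|e_1|^{2\ell}\le(\eta_n\sqrt n)^{2\ell-4}\E|e_1|^4$ and the spectral bounds $\tr(\bbA\bbA')\le nM^2$, $\tr\bigl((\bbA\bbA')^{\ell/2}\bigr)\le nM^\ell$ --- and the resulting orders in $n$, $\eta_n$ and $M$ do match the right-hand side via the identity $n^{\ell-1}M^\ell\eta_n^{2\ell-4}=n^\ell(n\eta_n^4)^{-1}(M\eta_n^2)^\ell$. (Your domination of the Gaussian term silently requires $\sqrt n\,\eta_n^2\to\infty$; this reading of ``slowly'' is in fact forced, since for $\ell=4$ and $\bbA=\bbI$ the stated bound fails unless $n\eta_n^4$ is bounded below, but it should be stated.)

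The genuine gap is the constant. The whole point of the lemma --- and the only reason for the otherwise vacuous hypothesis $\ell\le b\log(n\eta_n^2)$ --- is that the bound holds with $K^\ell$ for a $K$ \emph{independent of} $\ell$; this is exactly what allows the paper to take $\ell\asymp\log n$ in \eqref{Exil} and conclude tail bounds of order $o(n^{-t})$ for every $t$. Your first route imports $K_\ell$ from Lemma \ref{BSinq}, and that constant, inherited from Burkholder/Rosenthal, grows like $\ell^{c\ell}$; for $\ell\asymp\log(n\eta_n^2)$ this contributes a super-polynomial factor of the form $n^{c\log\log n}$ that cannot be absorbed into $K^\ell$. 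So route one establishes only the fixed-$\ell$ version of the inequality, which is strictly weaker than what is claimed and used. Your second route is the right one in spirit, but the assertion that one sums over ``the finitely many graph shapes'' is precisely where the difficulty hides: the number of admissible partitions of the $2\ell$ indices grows super-exponentially in $\ell$, and the proof must trade that growth against the extra powers of $n\eta_n^4$ gained by partitions with fewer coincidence classes, which is where the upper restriction on $\ell$ and the slow decay of $\eta_n$ actually enter. That bookkeeping is the substance of Bai and Yao's argument and is not carried out here; absent it, the cleanest correct treatment is the citation the paper gives.
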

\begin{lemma}\label{quadratic_equ}
Let $\mathbf{B}$ and $\mathbf{C}$ be $n \times n$ matrices. Let $\bbd$ be a $n$-vector. Let $\mathbf{e}=\left(e_{1}, \ldots, e_{n}\right)'$ be a random $n$-vector with i.i.d. standardized entries. Let $\tau:=\E e_{i}^{4}-3$. Then, we have that
\[
{\E}\left\{\left(\mathbf{e}' \mathbf{B} \mathbf{e}-\operatorname{tr} \mathbf{B}\right)\left(\mathbf{e}' \mathbf{C} \mathbf{e}-\operatorname{tr} \mathbf{C}\right)\right\}=\operatorname{tr}(\mathbf{B} \mathbf{C})+\operatorname{tr}\left(\mathbf{B} \mathbf{C}'\right)+\tau \sum_{i=1}^{n} b_{i i} c_{i i}
\]
and
$${\E}(\mathbf{e}' \mathbf{B} \mathbf{e}\mathbf{e}'\bbd)=\E e_1^3 \sum_{i=1}^nb_{ii}d_i.
$$
\end{lemma}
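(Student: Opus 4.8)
The plan is to prove both identities by the elementary route of expanding the quadratic forms coordinatewise and exploiting that $e_1,\dots,e_n$ are i.i.d.\ with $\E e_i=0$ and $\E e_i^2=1$. First I would write $\mathbf{e}'\mathbf{B}\mathbf{e}=\sum_{i,j}b_{ij}e_ie_j$ and $\mathbf{e}'\mathbf{C}\mathbf{e}=\sum_{k,l}c_{kl}e_ke_l$, so that
\[
\E\big[(\mathbf{e}'\mathbf{B}\mathbf{e})(\mathbf{e}'\mathbf{C}\mathbf{e})\big]=\sum_{i,j,k,l}b_{ij}c_{kl}\,\E\big[e_ie_je_ke_l\big].
\]
By independence and the vanishing of odd moments, $\E[e_ie_je_ke_l]$ is nonzero only when every index value occurring among $i,j,k,l$ occurs an even number of times; with four indices this leaves exactly the case that all four coincide (contributing $\E e_1^4=3+\tau$) together with the three mutually exclusive two-pair patterns $i=j\neq k=l$, $i=k\neq j=l$, $i=l\neq j=k$ (each contributing $1$).

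Collecting these contributions, I would use $\sum_{i,k}b_{ii}c_{kk}=(\tr\mathbf B)(\tr\mathbf C)$, $\sum_{i,j}b_{ij}c_{ji}=\tr(\mathbf{BC})$, $\sum_{i,j}b_{ij}c_{ij}=\tr(\mathbf{BC}')$, and remove the diagonal over-counting via $\sum_{i\neq k}=\sum_{i,k}-\sum_{i=k}$ (and likewise for the other two patterns). This gives
\[
\E\big[(\mathbf{e}'\mathbf{B}\mathbf{e})(\mathbf{e}'\mathbf{C}\mathbf{e})\big]=(\tr\mathbf B)(\tr\mathbf C)+\tr(\mathbf{BC})+\tr(\mathbf{BC}')+\tau\sum_{i=1}^n b_{ii}c_{ii}.
\]
Since $\E[\mathbf{e}'\mathbf{B}\mathbf{e}]=\tr\mathbf B$ and $\E[\mathbf{e}'\mathbf{C}\mathbf{e}]=\tr\mathbf C$, expanding the centered product $\E\{(\mathbf{e}'\mathbf{B}\mathbf{e}-\tr\mathbf B)(\mathbf{e}'\mathbf{C}\mathbf{e}-\tr\mathbf C)\}$ cancels the $(\tr\mathbf B)(\tr\mathbf C)$ term and yields the first assertion.

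For the second identity I would write $\E[(\mathbf{e}'\mathbf{B}\mathbf{e})(\mathbf{e}'\bbd)]=\sum_{i,j,k}b_{ij}d_k\,\E[e_ie_je_k]$; with only three indices present, the even-multiplicity requirement forces $i=j=k$, so the sum collapses to $\sum_i b_{ii}d_i\,\E e_1^3=\E e_1^3\sum_{i=1}^n b_{ii}d_i$.

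I do not expect any genuine obstacle: the only thing requiring care is the combinatorial bookkeeping of the index patterns and the inclusion--exclusion correction for the all-equal case. Note that the argument uses only $\E e_i^2=1$ and $\E e_i^4<\infty$ (respectively $\E e_i^3$), consistent with assumption (C3), and that no symmetry of $\mathbf B$ or $\mathbf C$ is needed.
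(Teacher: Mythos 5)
Your proposal is correct, and it supplies a proof that the paper itself omits: Lemma \ref{quadratic_equ} is stated in the appendix without proof or citation, so there is no ``paper's route'' to compare against. The direct index expansion with the pairing analysis of $\E[e_ie_je_ke_l]$ and the inclusion--exclusion correction for the all-equal case is the standard argument, and your bookkeeping yields exactly the claimed identities. One small wording issue: for the second identity you invoke an ``even-multiplicity requirement,'' but the correct criterion is that no index value may occur \emph{exactly once} (since $\E e_i=0$ while $\E e_1^3$ may be nonzero); with three slots this forces $i=j=k$, which has odd multiplicity $3$ and survives precisely because the third moment need not vanish. Your conclusion is right, so this is only a matter of stating the selection rule accurately.
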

\begin{lemma}[Lemma 3.1 of \cite{BaiP12L}]\label{BPlmt}
 Let $\left\{\mathbf{d}_{n}=\left(d_{1}, \ldots, d_{n}\right)'\right\}$ be a sequence of unit vectors with $\max _{k \leq n}\left|d_{k}\right| \rightarrow 0$.
There is a permutation $m$ of $\{1, \ldots, n\}$ given by
$$
\left(\begin{array}{cccc}
1 & 2 & \cdots & n \\
m(1) & m(2) & \cdots & m(n)
\end{array}\right)
$$
such that $\mathbf{d}_{f}=\left(d_{f(1)}, \ldots, d_{f(n)}\right)$ and $F_{n m}$ tends to a uniform distribution over the interval $(0,1),$ where $F_{n m}$ is a distribution function defined by
$$
F_{n m}(t)=\sum_{i \leq n t}\left|d_{f(i)}\right|^{2}.
$$
\end{lemma}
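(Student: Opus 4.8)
The plan is to reduce the statement to a purely combinatorial rearrangement fact about the masses $p_k := |d_k|^2$. Since $\mathbf{d}_n$ is a unit vector, $\sum_{k=1}^n p_k = 1$, and setting $\varepsilon_n := \max_{k\le n} p_k = \left(\max_{k\le n}|d_k|\right)^2$ we have $\varepsilon_n\to 0$ by hypothesis (and $\varepsilon_n\ge 1/n$ always, since the maximum dominates the average). Writing $S_j := \sum_{i=1}^j p_{f(i)}$ for the partial sums under a permutation $f$ (the map denoted $m$ in the statement), one has $F_{nm}(t)=S_{\lfloor nt\rfloor}$, so it suffices to construct $f$ with $\sup_{0\le t\le 1}|F_{nm}(t)-t|\to 0$; this is precisely uniform convergence of the distribution function to the identity, i.e.\ weak convergence to $U(0,1)$.

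First I would exhibit an explicit greedy permutation. Process the positions $j=1,\dots,n$ in order, maintaining the deviation $D_j := S_j - j/n$ from the target line, with $D_0=0$. At step $j$, if $D_{j-1}\le 0$ (at or below target) place the largest remaining mass, and if $D_{j-1}>0$ place the smallest remaining mass. The heart of the argument is the invariant $|D_j|\le \varepsilon_n$ for all $j$, proved by induction on $j$. The key observation is that among the $n-j+1$ masses still unused at step $j$, the running average equals $\frac1n-\frac{D_{j-1}}{\,n-j+1\,}$, so the largest remaining mass is $\ge \frac1n-\frac{D_{j-1}}{n-j+1}$ while the smallest is $\le \frac1n-\frac{D_{j-1}}{n-j+1}$.

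In the case $D_{j-1}\le 0$, choosing the largest mass $p^\ast$ gives $D_j=D_{j-1}+p^\ast-\frac1n$; the upper bound $D_j\le \varepsilon_n-\frac1n<\varepsilon_n$ follows from $p^\ast\le\varepsilon_n$, and the lower bound $D_j\ge D_{j-1}\frac{n-j}{n-j+1}\ge D_{j-1}\ge -\varepsilon_n$ follows from the average inequality, so the negative deviation is pulled back toward zero. The case $D_{j-1}>0$ is symmetric: with the smallest mass $p_\ast$ one gets $D_j\le D_{j-1}\frac{n-j}{n-j+1}\le\varepsilon_n$ and $D_j\ge D_{j-1}-\frac1n\ge -\frac1n\ge -\varepsilon_n$. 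This closes the induction. Finally, for every $t\in[0,1]$, $|F_{nm}(t)-t|\le |S_{\lfloor nt\rfloor}-\lfloor nt\rfloor/n| + |\lfloor nt\rfloor/n - t|\le \varepsilon_n+\frac1n\to 0$ uniformly in $t$, which yields the asserted convergence to the uniform distribution.

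The hard part will be obtaining a genuinely \emph{two-sided} control of $D_j$: a naive greedy rule bounds the overshoot but can let the deficit drift negative over many steps. The decisive point that removes this difficulty is that the extreme remaining mass is always on the correct side of the remaining average, so each step contracts the active deviation by the factor $\frac{n-j}{n-j+1}$ rather than merely failing to increase it; together with the uniform jump bound $\varepsilon_n$ and the elementary fact $\varepsilon_n\ge 1/n$ (needed to absorb the $-\frac1n$ terms), this gives the clean estimate $\sup_j|D_j|\le\varepsilon_n$ and hence the lemma.
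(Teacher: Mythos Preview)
The paper does not give its own proof of this lemma; it is simply quoted verbatim as Lemma~3.1 of \cite{BaiP12L} in the ``Some useful lemmas'' subsection, alongside other cited results such as Burkholder's inequality. So there is no in-paper argument to compare against.

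Your constructive proof is correct and self-contained. The greedy rule is well-defined (each step selects one unused index, yielding a genuine permutation), and the two-sided invariant $|D_j|\le\varepsilon_n$ is established cleanly: in the $D_{j-1}\le 0$ branch the upper bound comes from the crude estimate $p^\ast\le\varepsilon_n$ and the lower bound from $p^\ast\ge$ (remaining average), giving the contraction $D_j\ge D_{j-1}\cdot\frac{n-j}{n-j+1}\ge D_{j-1}\ge-\varepsilon_n$; the $D_{j-1}>0$ branch is symmetric, with the lower bound $D_j\ge D_{j-1}-1/n>-1/n\ge-\varepsilon_n$ relying on your observation that $\varepsilon_n\ge 1/n$ (the maximum mass dominates the average). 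The final sup-norm bound $\varepsilon_n+1/n$ then gives uniform convergence of $F_{nm}$ to the identity on $[0,1]$, which is exactly the claimed weak convergence to $U(0,1)$.

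If anything, your argument is slightly sharper than what is needed: the cited lemma only asserts convergence in distribution, for which pointwise convergence of $F_{nm}(t)$ at each $t$ would already suffice, but you obtain uniform convergence for free.
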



%

\end{document}